\documentclass[11pt]{article}
\usepackage{amsmath,amssymb,amsthm,amscd}

\setlength{\topmargin}{-0.5cm}
\setlength{\textheight}{22cm}
\setlength{\evensidemargin}{0.5cm}
\setlength{\oddsidemargin}{0.5cm}
\setlength{\textwidth}{15cm}

\newtheorem{theorem}{Theorem}[section]
\newtheorem{lemma}[theorem]{Lemma}

\newtheorem{corollary}[theorem]{Corollary}

\theoremstyle{plain}

\theoremstyle{definition}
\newtheorem{definition}[theorem]{Definition}
\newtheorem{remark}[theorem]{Remark}

\numberwithin{equation}{section}

\renewcommand{\labelenumi}{\textup{(\theenumi)}}

\renewcommand{\phi}{\varphi}

\newcommand{\Homeo}{\operatorname{Homeo}}

\newcommand{\id}{\operatorname{id}}
\newcommand{\Ker}{\operatorname{Ker}}

\newcommand{\Ad}{\operatorname{Ad}}

\newcommand{\K}{\mathcal{K}}
\newcommand{\A}{\mathcal{A}}
\newcommand{\B}{\mathcal{B}}
\newcommand{\E}{\mathcal{E}}

\newcommand{\Z}{\mathbb{Z}}

\def\OA{{{\mathcal{O}}_A}}
\def\DA{{{\mathcal{D}}_A}}
\def\OB{{{\mathcal{O}}_B}}
\def\DB{{{\mathcal{D}}_B}}

\title{Imprimitivity bimodules of Cuntz--Krieger algebras \\
and strong shift equivalences of matrices}
\author{Kengo Matsumoto \\
Department of Mathematics \\
Joetsu University of Education \\
Joetsu, 943-8512, Japan
}
\date{ }

\begin{document}
\maketitle

\def\det{{{\operatorname{det}}}}

\begin{abstract}
In this paper, we will introduce a notion of 
basis related Morita equivalence 
in the Cuntz--Krieger algebras 
$(\OA, \{S_a\}_{a \in E_A})$
with the canonical right finite basis
$\{S_a\}_{a \in E_A}$
as Hilbert $C^*$-bimodule,
and prove that 
two nonnegative irreducible matrices 
$A$ and $B$ are elementary equivalent, that is,
$A = CD, B = DC$ for some nonnegative rectangular matrices $C, D$,
if and only if
the Cuntz--Krieger algebras  
$(\OA, \{S_a\}_{a \in E_A})$ and $(\OB, \{ S_b\}_{b\in E_B})$ 
with the canonical right finite bases 
are basis relatedly elementary Morita equivalent.
\end{abstract}




\def\OA{{{\mathcal{O}}_A}}
\def\OB{{{\mathcal{O}}_B}}
\def\OZ{{{\mathcal{O}}_Z}}
\def\OTA{{{\mathcal{O}}_{\tilde{A}}}}
\def\SOA{{{\mathcal{O}}_A}\otimes{\mathcal{K}}}
\def\SOB{{{\mathcal{O}}_B}\otimes{\mathcal{K}}}
\def\SOZ{{{\mathcal{O}}_Z}\otimes{\mathcal{K}}}
\def\SOTA{{{\mathcal{O}}_{\tilde{A}}\otimes{\mathcal{K}}}}
\def\DA{{{\mathcal{D}}_A}}
\def\DB{{{\mathcal{D}}_B}}
\def\DZ{{{\mathcal{D}}_Z}}
\def\DTA{{{\mathcal{D}}_{\tilde{A}}}}
\def\SDA{{{\mathcal{D}}_A}\otimes{\mathcal{C}}}
\def\SDB{{{\mathcal{D}}_B}\otimes{\mathcal{C}}}
\def\SDZ{{{\mathcal{D}}_Z}\otimes{\mathcal{C}}}
\def\SDTA{{{\mathcal{D}}_{\tilde{A}}\otimes{\mathcal{C}}}}
\def\BC{{{\mathcal{B}}_C}}
\def\BD{{{\mathcal{B}}_D}}
\def\OAG{{\mathcal{O}}_{A^G}}
\def\OBG{{\mathcal{O}}_{B^G}}
\def\Max{{{\operatorname{Max}}}}
\def\Per{{{\operatorname{Per}}}}
\def\PerB{{{\operatorname{PerB}}}}
\def\Homeo{{{\operatorname{Homeo}}}}
\def\HA{{{\frak H}_A}}
\def\HB{{{\frak H}_B}}
\def\HSA{{H_{\sigma_A}(X_A)}}
\def\Out{{{\operatorname{Out}}}}
\def\Aut{{{\operatorname{Aut}}}}
\def\Ad{{{\operatorname{Ad}}}}
\def\Inn{{{\operatorname{Inn}}}}
\def\det{{{\operatorname{det}}}}
\def\exp{{{\operatorname{exp}}}}
\def\cobdy{{{\operatorname{cobdy}}}}
\def\Ker{{{\operatorname{Ker}}}}
\def\ind{{{\operatorname{ind}}}}
\def\id{{{\operatorname{id}}}}
\def\supp{{{\operatorname{supp}}}}
\def\co{{{\operatorname{co}}}}
\def\Sco{{{\operatorname{Sco}}}}
\def\ActA{{{\operatorname{Act}_{\DA}(\mathbb{T},\OA)}}}
\def\ActB{{{\operatorname{Act}_{\DB}(\mathbb{T},\OB)}}}
\def\U{{{\mathcal{U}}}}

\section{Introduction}

Let
$A=[A(i,j)]_{i,j=1}^N$
be an irreducible matrix  
with entries in nonnegative integers,
that is simply called a nonnegative matrix.
We assume that $A$ is not any permutation matrix.
Let $G_A =(V_A, E_A)$ 
be a finite directed graph with $N$ vertices $ V_A =\{ v^A_1,\dots,v^A_N\}$
and with 
$A(i,j)$ directed edges whose source vertices
are $v_i$ and terminal vertices are  $v_j$.
For a directed edge $e$,
we denote 
by $s(e)$ the source vertex of $e$ and 
by $t(e)$ the terminal vertex of $e$.
Let $E_{A} =\{ a_1, \dots, a_{N_A}\}$
be the edge set of the graph $G_A$. 
 The two-sided topological Markov shift 
$(\bar{X}_A,\bar{\sigma}_A)$ 
associated with 
$A$ 
is defined as a topological dynamical system of the compact 
Hausdorff space
$
\bar{X}_A
$
of all biinfinte sequences 
$
(a_i)_{i \in \Z}
\in
E_A^{\Z}
 $
 of edges $a_i$ of $G_A$ 
 such that 
$  t(a_i) = s(a_{i+1}) $
for all 
$i \in \Z $
with shift transformation
$\bar{\sigma}_A$ defined by
$
\bar{\sigma}_A( (a_i)_{i\in \Z}) =(a_{i+1})_{i\in \Z}. 
$
R. F. Williams in \cite{Williams} proved 
a fundamental classification result for the topological Markov shifts
which says 
that the topological Markov shifts 
$(\bar{X}_A,\bar{\sigma}_A)$ and 
$(\bar{X}_B,\bar{\sigma}_B)$ are topologically conjugate
if and only if the matrices $A$ and $B$ are strong shift equivalent.
Two nonnegative square matrices 
$M$ and $N$ are said to be  elementary equivalent, or strong shift equivalent in $1$-step,
if there exist
nonnegative rectangular matrices $R,S$ such that 
$M = RS, N = SR$.
If there exists a finite sequence of nonnegative matrices $A_1,A_2,\dots, A_k$
such that
$A = A_1, B = A_k$ and $A_i$ are elementary  equivalent to $A_{i+1}$ for $i=1,2,\dots, k-1$, 
then $A$ and $B$ are said to be strong shift equivalent.

In \cite{CK}, Cuntz--Krieger introduced and studied a class of $C^*$-algebras associated to topological Markov shifts. 
They are well-known and called the Cuntz--Krieger algebras.  
There is a standard method to associate a Cuntz--Krieger algebra from a square matrix with entries in nonnegative integers as described in \cite[Section 4]{Ro}. 
For a nonnegative matrix  $A =[A(i,j)]_{i,j=1}^N $,
 the associated directed graph  
$G_A = (V_A,E_A)$
with the edge set 
$E_A =\{ a_1, \dots, a_{N^A}\}$ 
 has the $N_A \times N_A$
 transition matrix $A^G =[A^G(a_i,a_j)]_{i,j=1}^{N_A}$ of edges 
defined by 
\begin{equation}
A^G(a_i,a_j) =
\begin{cases} 
 1 &  \text{  if  } t(a_i) = s(a_j), \\
 0 & \text{  otherwise}
\end{cases}\label{eq:AG}
\end{equation}
for $a_i, a_j \in E_A$.
The Cuntz--Krieger algebra $\OA$ for the matrix $A$ with
 entries in nonnegative integers  is defined as the Cuntz--Krieger algebra
 ${\mathcal{O}}_{A^G}$ 
for the matrix $A^G$ which is the universal $C^*$-algebra generated by 
partial isometries
$S_{a_i}$ indexed by edges $a_i, i=1,\dots, N_A$ subject to the relations:
\begin{equation}
\sum_{j=1}^{N_A}  S_{a_j} S_{a_j}^* = 1, 
\qquad
S_{a_i}^* S_{a_i} = 
\sum_{j=1}^{N_A}  A^G(a_i, a_j ) S_{a_j} S_{a_j}^*
 \quad \text{ for } i=1,\dots,N_A.
\label{eq:OAG}
\end{equation}
Since we are assuming that the matrix $A$ is irreducible and not any permutation,
the algebra $\OA$ is uniquely determined by the relation
\eqref{eq:OAG}, and becomes simple and purely infinite
(\cite{CK}).
For a word 
$\mu =(\mu_1, \dots, \mu_k), \mu_i  \in E_A$,
we denote by $S_\mu$ the partial isometry
$S_{\mu_1}\cdots S_{\mu_k}$.  
For $t \in {\mathbb{R}}/\Z = {\mathbb{T}}$,
the correspondence
$S_{a_i} \rightarrow e^{2 \pi\sqrt{-1}t}S_{a_i},
\, i=1,\dots,N_A$
gives rise to an automorphism
of $\OA$ denoted by
$\rho^A_t$.
The automorphisms
$\rho^A_t, t \in {\mathbb{T}}$
yield an action of ${\mathbb{T}}$
on $\OA$ called the gauge action.
 Let  us denote by 
$\DA$ 
the $C^*$-subalgebra of $\OA$
generated by the projections of the form:
$S_{a_{i_1}}\cdots S_{a_{i_n}}S_{a_{i_n}}^* \cdots S_{a_{i_1}}^*,
i_1,\dots,i_n =1,\dots,N_A$.
Let $\K$ be the $C^*$-algebra of all compact operators on a separable infinite dimensional Hilbert space.
 Cuntz and Krieger proved that
 if two topological Markov shifts
 $(\bar{X}_A,\bar{\sigma}_A)$ and $(\bar{X}_B,\bar{\sigma}_B)$
  are topologically conjugate, 
the gauge actions $\rho^A$ and $\rho^B$
of the Cuntz-Krieger algebras
 $\OA$ and $\OB$ 
 are stably conjugate.
 That is,
 there exists an isomorphism $\phi$ from 
 $\OA \otimes \K$ to
 $\OB \otimes \K$
 such that 
 $\phi \circ (\rho_t^A \otimes \id) = (\rho_t^B \otimes \id) \circ \phi,
\, t \in {\mathbb{T}}$
 (\cite[3.8. Theorem]{CK}, cf. \cite{Cu3}).
As a corollary of this result,
 we know that 
if two nonnegative irreducible matrices 
$A$ and $B$ are strong shift equivalent, 
the stabilized Cuntz--Krieger algebras 
$\OA \otimes \K$ and 
 $\OB \otimes \K$
 are isomorphic.
On the other hand,
M. Rieffel in \cite{Rieffel1} has introduced the notion of 
strong Morita equivalence in $C^*$-algebras
from the view point of representation theory.
Two $C^*$-algebras $\A$ and $\B$ are said to be 
strongly Morita equivalent if there exists an $\A$--$\B$-imprimitivity bimodule.
By Brown--Green--Rieffel theorem \cite[Theorem 1.2]{BGR},
 two unital $C^*$-algebras 
$\A$ and $\B$
are strongly Morita equivalent 
if and only if their stabilizations 
$\A\otimes\K$ and $\B\otimes\K$ 
are isomorphic
(cf. \cite{Brown}, \cite{BGR}, \cite{Combes}). 
From this view point, 
the author has recently introduced a notion of 
strongly  Morita equivalent for
 the triplets 
$(\OA, \DA, \rho^A)$
called the Cuntz--Krieger triplet,
and proved that 
$(\OA, \DA, \rho^A)$
and 
$(\OB,\DB,\rho^B)$
are strongly Morita equivalent
if and only if
$A$ and $B$ are strong shift equivalent
(\cite{MaPre2016}).
We note that Morita equivalence of $C^*$-algebras 
from view points of strong shift equivalence of matrices
had been studied in 
\cite{MaETDS2004},
\cite{MaYMJ2007},
\cite{MPT},
\cite{Tomforde},
etc.

Y. Watatani in \cite{Watatani}
has introduced the notion of finite basis of  Hilbert $C^*$-module.
In this paper, inspired by his definition of finite basis, we 
 we will introduce a notion of 
{\it basis relatedly Morita equivalence}\/
in the Cuntz--Krieger algebras 
$(\OA, \{S_a\}_{a \in E_A})$
with the canonical right finite basis
$\{S_a\}_{a \in E_A}$
of the generating partial isometries satisfying \eqref{eq:OAG}
as Hilbert $C^*$-bimodule.
For nonnegative irreducible matrices $A$ and $B$,
the Cuntz--Krieger algebras
$(\OA, \{S_a\}_{a \in E_A})$
and
$(\OB, \{S_b\}_{b \in E_B})$
with their respect  canonical right  finite bases
$\{S_a\}_{a \in E_A}$
and
$\{S_b\}_{b \in E_B}$
are said to be {\it basis relatedly elementary Morita equivalent}\/
if there exists an $\OA$--$\OB$-implimitivity bimodule
${}_A\!X_B$ with 
a right finite basis $\{\eta_c\}_{c \in \E_C}$
and
a left finite basis $\{\zeta_d\}_{d \in \E_D}$
that are finitely related such that the relative tensor products
between 
${}_A\!X_B$ and its conjugate
$\overline{{}_A\!X_B}$ are isomorphic to $\OA$ and $\OB$, respectively:
\begin{equation}
\OA \cong {}_A\!X_B\otimes_{\OB}\overline{{}_A\!X_B}, \qquad
\OB \cong \overline{{}_A\!X_B}\otimes_{\OA}{}_A\!X_B
\end{equation}
as Hilbert $C^*$-bimodule with respect to their canonical right finite bases. 
If 
$(\OA, \{S_a\}_{a \in E_A})$
and
$(\OB, \{S_b\}_{b \in E_B})$
are connected by a finite sequence  of 
basis related elementary Morita equivalences,
then they are said to be 
{\it basis relatedly Morita equivalent}.
We will prove the following theorem.  
\begin{theorem}[Theorem \ref{thm:main3}]
Let $A, B$ be nonnegative irreducible matrices that are not any permutations.
Then 
$A$ and $B$ are elementary equivalent, that is,
$A = CD, B = DC$ for some nonnegative rectangular matrices $C, D$,
if and only if
the Cuntz--Krieger algebras  
$(\OA, \{S_a\}_{a \in E_A})$ and $(\OB, \{ S_b\}_{b\in E_B})$ 
with the canonical right finite bases
are basis relatedly elementary Morita equivalent.
\end{theorem}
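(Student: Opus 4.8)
The plan is to route the whole equivalence through the \emph{bipartite} Cuntz--Krieger algebra attached to the off-diagonal block matrix. For the forward direction, suppose $A=CD$ and $B=DC$ with $C,D$ nonnegative rectangular, and form
\[
Z=\begin{pmatrix} 0 & C \\ D & 0\end{pmatrix},
\qquad Z^2 = A\oplus B .
\]
The directed graph $G_Z$ is bipartite: its vertex set is $V_A\sqcup V_B$, the $C$-edges $\E_C$ run from $A$-vertices to $B$-vertices and the $D$-edges $\E_D$ run back, so that the length-two $CD$-paths are in bijection with $E_A$ and the $DC$-paths with $E_B$. Since $A$ is irreducible and not a permutation, so is $Z$, whence $\OZ$ is simple and purely infinite. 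I would first set $P_A=\sum_{c\in\E_C}S_cS_c^*$ and $P_B=\sum_{d\in\E_D}S_dS_d^*$, check $P_A+P_B=1$, and establish the identifications $\OA\cong P_A\OZ P_A$ via $S_a\mapsto S_cS_d$ (for the $CD$-path $cd$ naming the edge $a$) and $\OB\cong P_B\OZ P_B$ via $S_b\mapsto S_dS_c$; both are full corners because $\OZ$ is simple.

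Next I would take ${}_A\!X_B := P_A\OZ P_B$, with left and right actions by multiplication, left inner product ${}_A\langle\xi,\eta\rangle=\xi\eta^*\in\OA$ and right inner product $\langle\xi,\eta\rangle_B=\xi^*\eta\in\OB$; being the off-diagonal corner of $\OZ$ between two complementary full projections, it is automatically an $\OA$--$\OB$-imprimitivity bimodule. The key point is that $\{S_c\}_{c\in\E_C}$ is a right finite basis and $\{S_d^*\}_{d\in\E_D}$ is a left finite basis: the reconstruction identities reduce, via the Cuntz--Krieger relations for $Z$, to $\sum_{c}S_cS_c^*\,\xi=P_A\xi=\xi$ and $\sum_{d}\xi\,S_dS_d^*=\xi P_B=\xi$ for $\xi\in{}_A\!X_B$, and the two bases are finitely related because $S_c,S_d$ obey the finitely many defining relations of $\OZ$. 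The conjugate $\overline{{}_A\!X_B}$ is identified with $P_B\OZ P_A$ through $\xi\mapsto\xi^*$, and the relative tensor products collapse to corner multiplication,
\[
{}_A\!X_B\otimes_{\OB}\overline{{}_A\!X_B}\cong P_A\OZ P_B\OZ P_A=P_A\OZ P_A\cong\OA,
\qquad
\overline{{}_A\!X_B}\otimes_{\OA}{}_A\!X_B\cong\OB,
\]
using fullness of $P_B$ and $P_A$. Under the first isomorphism $S_c\otimes\overline{S_d^{\,*}}\mapsto S_cS_d=S_a$, so the canonical right basis $\{S_a\}_{a\in E_A}$ of $\OA$ is exactly matched, and symmetrically for $\OB$; this is what makes the equivalence basis related.

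For the converse I would start from an abstract basis related elementary Morita equivalence, with bimodule ${}_A\!X_B$, right finite basis $\{\eta_c\}_{c\in\E_C}$, left finite basis $\{\zeta_d\}_{d\in\E_D}$, and the two compatible tensor-product isomorphisms onto $\OA,\OB$ respecting the canonical bases. The idea is to reverse-engineer the bipartite graph: the inner products $\langle\eta_c,\eta_{c'}\rangle_B$ and ${}_A\langle\eta_c,\eta_{c'}\rangle$ sit in the diagonal subalgebras $\DB,\DA$ and encode the $B$-vertex and $A$-vertex that each $\eta_c$ links, yielding an incidence matrix $C=[C(i,p)]$; the left basis $\{\zeta_d\}$ gives in the same way $D=[D(p,j)]$, with the finitely-related condition guaranteeing these counts are finite nonnegative integers. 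Matching $\{S_a\}_{a\in E_A}$ against the induced basis $\{\eta_c\otimes\overline{\zeta_d}\}$ of ${}_A\!X_B\otimes_{\OB}\overline{{}_A\!X_B}$ then forces $A(i,j)=\sum_p C(i,p)D(p,j)$, and symmetrically $B=DC$. The main obstacle is precisely this reverse step: the hypotheses deliver an \emph{isomorphism} of bimodules compatible with the bases, whereas the conclusion is an \emph{equality} of nonnegative integer matrices indexed by vertices. I expect the delicate part to be showing that the finitely-related structure pins down the vertex-level (not merely edge-level or $K$-theoretic) data, so that $C(i,p),D(p,j)$ are well defined and $CD,DC$ reproduce $A,B$ on the nose rather than only up to conjugacy or strong shift equivalence; the forward direction, by contrast, is essentially linking-algebra bookkeeping for the corner $P_A\OZ P_B$ and should be routine once the corner identifications $\OA\cong P_A\OZ P_A$ and $\OB\cong P_B\OZ P_B$ are secured.
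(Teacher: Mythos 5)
Your forward direction is essentially the paper's own argument: the block matrix $Z=\left(\begin{smallmatrix}0&C\\D&0\end{smallmatrix}\right)$, the corners $P_A\OZ P_A\cong\OA$, $P_B\OZ P_B\cong\OB$, the bimodule ${}_A\!X_B=P_A\OZ P_B$ with right basis $\{S_c\}_{c\in E_C}$ and left basis $\{S_d^*\}_{d\in E_D}$, the identification of the conjugate with $P_B\OZ P_A$ via $\xi\mapsto\xi^*$, and the collapse of the relative tensor products onto the corners sending $S_c\otimes S_d$ to $S_cS_d=S_a$. This half is correct and needs only the routine verification of the finitely-related identities (which the paper carries out explicitly with $\tilde C=C^G$, $\tilde D=D^G$).

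The converse, however, is where the entire difficulty of the theorem lives, and your proposal stops exactly at that point: you say you would ``reverse-engineer the bipartite graph'' from the inner products and that you ``expect the delicate part to be showing that the finitely-related structure pins down the vertex-level data,'' but you give no mechanism for doing so. Concretely, three things are missing. First, you need the vanishing statement that $\eta_c\otimes_B\xi_d=0$ whenever $(c,d)\neq(c(a),d(a))$ for every $a\in E_A$; the paper derives this from comparing $\sum_a {}_A\!\langle \eta_{c(a)}\otimes_B\xi_{d(a)}\mid\eta_{c(a)}\otimes_B\xi_{d(a)}\rangle=1$ with the full double sum over $\E_C\times\E_D$, which also equals $1$, and using positivity. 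Second, you need the identity $A^G(a_i,a_j)=\tilde D(d(a_i),c(a_j))$ (and its $B$-analogue), which is what actually transports the Cuntz--Krieger relation $S_{a_i}^*S_{a_i}=\sum_j A^G(a_i,a_j)S_{a_j}S_{a_j}^*$ into a statement about the abstract relating matrices; without it there is no bridge from the bimodule data to the graph of $A$. Third, and most delicately, the matrices $C(u,v)$ and $D(v,u)$ are defined by \emph{counting} basis indices $c(a_i)$ attached to pairs of vertices, and one must prove that each index $c\in\E_C$ determines a unique pair $(u,v)\in V_A\times V_B$; the paper does this through the triple tensor products $\eta_c\otimes_B\xi_d\otimes_A\eta_{c'}$, whose nonvanishing is controlled by $\tilde C(c,d)\tilde D(d,c')$, together with surjectivity of the maps $c(\cdot)$, $d(\cdot)$ forced by the basis-related isomorphisms $\phi_A,\phi_B$. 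None of these steps is supplied or even identified as a lemma in your sketch, so as written the converse is a statement of intent rather than a proof; the final bijective correspondence $a_i\leftrightarrow(c(a_i),d(a_i))$ giving $A(u_1,u_2)=\sum_{v}C(u_1,v)D(v,u_2)$ on the nose depends on all of them.
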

Thanks to the Williams' classification theorem,
we have the following corollary. 
\begin{corollary}[Corollary \ref{cor:main4}, \cite{MaPre2016}]
Let $A, B$ be nonnegative irreducible matrices that are not any permutations.
Then the following three assertions are equivalent.
\begin{enumerate}
\renewcommand{\theenumi}{\roman{enumi}}
\renewcommand{\labelenumi}{\textup{(\theenumi)}}
\item 
The two-sided topological Markov shifts 
$(\bar{X}_A, \bar{\sigma}_A)$ and  
$(\bar{X}_B, \bar{\sigma}_B)$
are topologically conjugate.
\item
The Cuntz--Krieger algebras  
$(\OA, \{S_a\}_{a \in E_A})$ and $(\OB, \{ S_b\}_{b\in E_B})$ 
with the canonical right finite bases are basis relatedly Morita equivalent.
\item The Cuntz--Krieger triplets
$(\OA, \DA, \rho^A)$
and 
$(\OB,\DB,\rho^B)$
are strongly Morita equivalent
in the sense of \cite{MaPre2016}.
\end{enumerate}
\end{corollary}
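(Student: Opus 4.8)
The plan is to prove the three assertions equivalent by establishing the two biconditionals (i)$\Leftrightarrow$(ii) and (i)$\Leftrightarrow$(iii), using Williams' classification theorem as the bridge that translates topological conjugacy of the two-sided shifts into strong shift equivalence of the defining matrices. The whole argument is a chaining argument: both the combinatorial notion (strong shift equivalence) and the operator-algebraic notion (basis relatedly Morita equivalence) are, by definition, the transitive closures of their respective one-step versions (elementary equivalence and basis related elementary Morita equivalence), so it suffices to match the one-step relations and then concatenate.

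First I would handle (i)$\Leftrightarrow$(ii). By Williams' theorem cited in the introduction, (i) holds if and only if $A$ and $B$ are strong shift equivalent, i.e. there is a finite chain $A = A_1, A_2, \dots, A_k = B$ of nonnegative irreducible matrices (none a permutation) in which $A_i$ and $A_{i+1}$ are elementary equivalent for each $i$. For each such step, Theorem \ref{thm:main3} supplies a basis related elementary Morita equivalence between $(\mathcal{O}_{A_i}, \{S_a\})$ and $(\mathcal{O}_{A_{i+1}}, \{S_a\})$ with respect to their canonical right finite bases, and conversely. Since being basis relatedly Morita equivalent means precisely being joined by a finite sequence of basis related elementary Morita equivalences, concatenating the equivalences produced along the chain yields a basis relatedly Morita equivalence between $(\OA, \{S_a\})$ and $(\OB, \{S_b\})$, giving (ii). Running the same argument in reverse — decomposing a basis relatedly Morita equivalence into elementary steps and invoking the converse half of Theorem \ref{thm:main3} at each step — produces a chain of elementary equivalences, hence strong shift equivalence, hence (i) by Williams. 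The equivalence (i)$\Leftrightarrow$(iii) is then essentially a citation: by \cite{MaPre2016} the Cuntz--Krieger triplets $(\OA,\DA,\rho^A)$ and $(\OB,\DB,\rho^B)$ are strongly Morita equivalent if and only if $A$ and $B$ are strong shift equivalent, and combining this with Williams' theorem identifies (iii) with (i).

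The main obstacle, and the only point requiring genuine care beyond bookkeeping, is to verify that the chaining in (i)$\Leftrightarrow$(ii) is legitimate at the level of finite-basis structure. One must check that the intermediate matrices $A_i$ appearing in a strong shift equivalence are themselves irreducible and non-permutation, so that Theorem \ref{thm:main3} applies verbatim at each step and each $\mathcal{O}_{A_i}$ is again a simple purely infinite Cuntz--Krieger algebra equipped with its canonical right finite basis; and one must confirm that the relative tensor product isomorphisms of the elementary steps compose so that the composite bimodule again carries finite bases that are finitely related in the required sense. Provided the transitive-closure definitions of the two notions are set up so that a finite concatenation of basis related elementary Morita equivalences is again a basis relatedly Morita equivalence — which is how the notion is defined in the introduction — this reduces to checking that each step stays within the hypothesized class of matrices, after which the biconditional follows formally from Theorem \ref{thm:main3} and Williams' theorem.
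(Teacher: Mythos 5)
Your proposal matches the paper's own treatment: the paper gives no separate proof of this corollary, regarding it as immediate from Theorem \ref{thm:main3} combined with Williams' classification theorem (topological conjugacy of the two-sided shifts if and only if strong shift equivalence of the matrices) and the cited result of \cite{MaPre2016} for assertion (iii), which is exactly your chaining argument. The caveats you flag (that intermediate matrices in the strong shift equivalence chain can be taken irreducible and non-permutation, and that each elementary step stays within the class to which Theorem \ref{thm:main3} applies) are likewise left implicit in the paper.
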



\section{ Basis related Morita equivalence}
Let $\A$ and $\B$ be $C^*$-algebras.
Let us first recall the definition of Hilbert $C^*$-module
introduced by Paschke \cite{Paschke} (cf. \cite{Rieffel1}, \cite{Kasparov}, \cite{KW}, etc.).
A left Hilbert $C^*$-module ${}_\A\!X$ over $\A$ is a $\mathbb{C}$-vector space 
with a left $\A$-module structure and an $\A$-valued inner product 
${}_\A\!\langle\hspace{3mm}\mid\hspace{3mm}\rangle$
satisfying the following conditions \cite[Definition 1.1]{KW}:
\begin{enumerate}
\item ${}_\A\!\langle\hspace{3mm}\mid\hspace{3mm}\rangle$ 
is left linear  and right conjugate linear.
\item ${}_\A\!\langle a x\mid y \rangle = a {}_\A\!\langle x\mid y \rangle$
and ${}_\A\!\langle x \mid a y \rangle = {}_\A\!\langle x \mid y \rangle a^* $ 
for all
$x,y \in {}_\A\!X $ and $a \in \A$.
\item ${}_\A\!\langle x \mid x \rangle \ge 0$ for all
$x\in {}_\A\!X$, and ${}_\A\!\langle x \mid x \rangle = 0$ 
if and only if $x =0$.
\item ${}_\A\!\langle x \mid y  \rangle = {}_\A\!\langle y \mid x \rangle^*$ for all
$x,y \in {}_\A\!X$.
\item ${}_\A\!X$ is complete with respect to the norm 
$\| x \| = \| {}_\A\!\langle x \mid x \rangle \|^{\frac{1}{2}}$.
\end{enumerate}
It is said to be (left) full if the closed linear span of
$\{ {}_\A\!\langle x \mid  y \rangle \mid x,y \in {}_\A\!X \}$ is equal to $\A$.

Similarly a right Hilbert $C^*$-module $X_\B$ over $\B$ 
is defined as a $\mathbb{C}$-vector space 
with a right $\B$-module structure and a $\B$-valued inner product 
$\langle\hspace{3mm}\mid\hspace{3mm}\rangle_\B$
satisfying the following conditions \cite[Definition 1.2]{KW}:
\begin{enumerate}
\item $\langle\hspace{3mm}\mid\hspace{3mm}\rangle_\B$ 
is left conjugate and right linear.
\item $\langle x\mid y b \rangle_\B = \langle x\mid y \rangle_\B b$
and $\langle x b\mid y \rangle_\B = b^*\langle x \mid y \rangle_\B $ 
for all
$x,y \in X_\B $ and $b \in \B$.
\item $\langle x \mid x \rangle_\B \ge 0$ for all
$x\in X_\B$, and $\langle x \mid x \rangle_\B = 0$ 
if and only if $x =0$.
\item $\langle x \mid y  \rangle_\B = \langle y \mid x \rangle_\B^*$ for all
$x,y \in X_\B$.
\item $X_\B$ is complete with respect to the norm 
$\| x \| = \| \langle x \mid x \rangle_\B \|^{\frac{1}{2}}$.
\end{enumerate}
It is said to be (right) full if the closed linear span of
$\{ \langle x \mid  y \rangle_\B \mid x,y \in X_\B \}$ is equal to $\B$.

Throughout the paper,
we mean by an $\A$--$\B$-bimodule
written ${}_{\A}\!X_\B$ 
a left Hilbert $C^*$-module over $\A$
and also 
a right Hilbert $C^*$-module over $\B$
in the above sense (\cite{Rieffel1}, \cite{KW},  \cite{RW}, etc. ).
In \cite[Definition 6.10]{Rieffel1}, M. Rieffel has defined 
the notion of an $\A$--$\B$-imprimitivity bimodule
in the following way.
An $\A$--$\B$-bimodule ${}_{\A}\!X_\B$ is said to be 
$\A$--$\B$-imprimitivity bimodule
if the three conditions below hold
\begin{enumerate}
\item ${}_{\A}\!X_\B$ is a full left Hilbert $\A$-module
with $\A$-valued left inner product 
${}_\A\!\langle\hspace{3mm}\mid\hspace{3mm}\rangle$,
 and a full right Hilbert $\B$-module
with $\B$-valued right inner product 
$\langle\hspace{3mm}\mid\hspace{3mm}\rangle_\B$,
\item $\langle a\cdot x \mid y \rangle_{\B} =\langle  x \mid a^* \cdot y \rangle_{\B}$ and 
 ${}_{\A}\!\langle  x\cdot b \mid y \rangle ={}_{\A}\!\langle  x \mid  y\cdot b^* \rangle$
for all $x, y \in {}_{\A}\!X_\B$ and $a \in \A, \, b \in \B$.
\item
${}_{\A}\!\langle  x \mid y \rangle\cdot z = x \cdot \langle  y \mid  z \rangle_{\B}$
for all $x, y, z \in {}_{\A}\!X_\B$.
\end{enumerate}
We note that the above condition (2) implies 
\begin{equation}
\| {}_\A\!\langle x\mid x \rangle \| 
= 
\| \langle x\mid x \rangle_\B \| 
\qquad x \in {}_\A\!X_\B \label{eq:norm}
\end{equation}
so that the two norms on  
${}_\A\!X_\B$ induced by 
the left hand side and the right hand side of \eqref{eq:norm} coincide 
(cf. \cite[Corollary 1.19]{KW}, \cite[Proposition 3.11]{RW}).

Y. Watatani in \cite{Watatani}
has introduced the notion of a finite basis of a Hilbert $C^*$-module to construct $C^*$-index theory.
We follow his definition of finite basis.
An $\A$--$\B$-imprimitivity bimodule ${}_{\A}\!X_\B$
has a right finite  basis if there exists a finite family
$\{ \eta_c\}_{c \in \E_C} \subset {}_{\A}\!X_\B$
of vectors
indexed by a finite set $\E_C$
such that 
\begin{equation}
x = \sum_{c \in \E_C}\eta_c \langle \eta_c \mid x \rangle_{\B} 
\quad \text{ for } x \in {}_{\A}\!X_\B. \label{eq:basis}
\end{equation}
We note that the condition \eqref{eq:basis}
is equivalent to the condition
\begin{equation}
 \sum_{c \in \E_C} {}_\A\!\langle \eta_c \mid \eta_c \rangle =1 
\qquad \text{ in } \A \label{eq:remarkbasis}
\end{equation}
if $\A$ is unital and the center 
$Z(\A)$ of $\A$ is $\mathbb{C}$,
because of the identity
$\eta_c \langle \eta_c \mid x \rangle_{\B} 
={}_\A\!\langle \eta_c \mid \eta_c \rangle x$
(\cite[Proposition 1.13]{KW}). 
Similarly 
an $\A$--$\B$-imprimitivity bimodule ${}_{\A}\!X_\B$
has a left finite  basis if there exists a finite family
$\{ \zeta_d\}_{d \in \E_D} \subset {}_{\A}\!X_\B$
of vectors
indexed by a finite set $\E_D$
such that 
\begin{equation}
x = \sum_{d \in \E_D}  {}_\A\!\langle x\mid \zeta_d  \rangle \zeta_d 
\quad \text{ for } x \in {}_{\A}\!X_\B. \label{eq:lbasis}
\end{equation}

\begin{definition}
Let ${}_{\A}\!X_\B$ be an 
$\A$--$\B$-imprimitivity bimodule 
with a right finite  basis $\{ \eta_c\}_{c \in \E_C} \subset {}_{\A}\!X_\B$
and
a left finite  basis $\{ \zeta_d\}_{d \in \E_D} \subset {}_{\A}\!X_\B$,
respectively.
The bases $\{ \eta_c\}_{c \in \E_C}$ and  
$\{ \zeta_d\}_{d \in \E_D}$ 
are said to be {\it finitely related}
if there exist
\begin{align*}
\text{an } \E_C\times \E_D-\text{matrix }
\tilde{C} & = [\tilde{C}(c,d)]_{c \in \E_C, d \in \E_D} \text{ with entries in } \{0,1\},
\text{ and } \\
\text{an } \E_D\times \E_C-\text{matrix }
\tilde{D} & = [\tilde{D}(d,c)]_{d \in \E_D, c \in \E_C} \text{ with entries in } \{0,1\}  
\end{align*}
such that 
\begin{align}
\langle \eta_c \mid \eta_c \rangle_\B 
& = \sum_{d \in \E_D}\tilde{C}(c,d) 
\langle \zeta_d \mid \zeta_d \rangle_\B, \qquad c \in \E_C, \label{eq:3.1eta}\\
{}_\A\!\langle \zeta_d \mid \zeta_d \rangle 
& = \sum_{c \in \E_C}\tilde{D}(d,c) 
{}_\A\!\langle \eta_c \mid \eta_c\rangle, \qquad d \in \E_D  \label{eq:3.1zeta}\\
\intertext{ and }
{}_\A\!\langle \zeta_d \mid \eta_c \rangle 
\cdot {}_\A\!\langle  \eta_c \mid \zeta_d \rangle 
& = \tilde{C}(c,d) 
{}_\A\!\langle \zeta_d \mid \zeta_d \rangle, \qquad c \in \E_C, \, d \in \E_D,
 \label{eq:3.1zetaeta}\\
\langle \eta_c \mid \zeta_d \rangle_\B 
\cdot\langle \zeta_d \mid \eta_c  \rangle_\B 
& = \tilde{D}(d,c) 
\langle \eta_c \mid \eta_c \rangle_\B, \qquad c \in \E_C, \, d \in \E_D. \label{eq:3.1etazeta}
\end{align}
In this case the quintuplet
$({}_{\A}\!X_\B, \{ \eta_c\}_{c \in \E_C}, \{ \zeta_d\}_{d \in \E_D}, \tilde{C}, \tilde{D})$
is called a {\it bipartitely related}\/ 
$\A$--$\B$-imprimitivity bimodule.
\end{definition}

\begin{definition}
Let ${}_{\A}\!X_\B$ be an 
$\A$--$\B$-imprimitivity bimodule 
with right finite  basis $\{ \eta_c\}_{c \in \E_C} \subset {}_{\A}\!X_\B$,
and
${}_{\B}\!Y_\A$ be a 
$\B$--$\A$-imprimitivity bimodule 
with right finite  basis $\{ \xi_d\}_{d \in \E_D} \subset {}_{\B}\!Y_\A$,
respectively.
The bases $\{ \eta_c\}_{c \in \E_C}$ and  
$\{ \xi_d\}_{d \in \E_D}$ 
are said to be {\it finitely related}
if there exist
\begin{align*}
\text{an } \E_C\times \E_D-\text{matrix }
\tilde{C} & = [\tilde{C}(c,d)]_{c \in \E_C, d \in \E_D} \text{ with entries in } \{0,1\},
\text{ and } \\
\text{an } \E_D\times \E_C-\text{matrix }
\tilde{D} & = [\tilde{D}(d,c)]_{d \in \E_D, c \in \E_C} \text{ with entries in } \{0,1\}  
\end{align*}
such that 
\begin{align}
\langle \eta_c \mid \eta_c \rangle_\B & = \sum_{d \in \E_D}\tilde{C}(c,d) 
{}_\B\!\langle \xi_d \mid \xi_d \rangle, \qquad c \in \E_C, \label{eq:3.2eta}\\
\langle \xi_d \mid \xi_d \rangle_\A & = \sum_{c \in \E_C}\tilde{D}(d,c) 
{}_\A\!\langle \eta_c \mid \eta_c\rangle, \qquad d \in \E_D  \label{eq:3.2xi}\\
\intertext{ and }
\langle \langle \eta_c \mid \eta_c \rangle_\B \xi_d
\mid \xi_d \rangle_\A 
& = \tilde{C}(c,d) 
\langle \xi_d \mid \xi_d \rangle_\A, \qquad c \in \E_C, \, d \in \E_D \label{eq:3.2etaxi}\\
\langle \langle \xi_d \mid \xi_d \rangle_\A \eta_c 
\mid \eta_c \rangle_\B 
& = \tilde{D}(d,c) 
\!\langle \eta_c \mid \eta_c \rangle_\B, \qquad c \in \E_C, \, d \in \E_D. \label{eq:3.2xieta}
\end{align}
We will know from \eqref{eq:3.tensorA}
that the left hand sides of  \eqref{eq:3.2etaxi} and \eqref{eq:3.2xieta}
will coincide with  the inner products 
$\langle \eta_c\otimes_\B \xi_d \mid \eta_c\otimes_\B \xi_d \rangle_\A$
and
$\langle \xi_d \otimes_\A \eta_c \mid \xi_d\otimes_\A \eta_c \rangle_\B$
of their respect  relative tensor products. 

If two imprimitivity bimodules 
${}_{\A}\!X_\B$ and
${}_{\B}\!Y_\A$
have finitely related  basis, 
they are called 
{\it finitely related imprimitivity bimodules}.

For an $\A$--$\B$ imprimitivity bimodule  
${}_\A\!X_{\B}$,
its conjugate module
$\overline{{}_\A\!X_{\B}}$
as a $\B$--$\A$ imprimitivity bimodule
is defined in the following way (\cite[Definition 6.17]{Rieffel1}, cf. \cite{KW}, \cite{RW}).
The module $\overline{{}_\A\!X_{\B}}$
is
${}_\A\!X_{\B}$ itself as a set.
Let us denote by $\bar{x}$ in $\overline{{}_\A\!X_{\B}}$
the element $x $ in ${{}_\A\!X_{\B}}$. 
Its module structure with inner products are defined by 
\begin{enumerate}
\renewcommand{\theenumi}{\roman{enumi}}
\renewcommand{\labelenumi}{\textup{(\theenumi)}}
\item 
$\bar{x} + \bar{y} := \overline{x+y}$ and 
$\lambda\cdot \bar{x}:= \overline{ \bar{\lambda} x}$ 
for all $\lambda \in {\mathbb{C}}, x,y \in {}_\A\!X_{\B}.$
\item
$b\cdot \bar{x} \cdot a :=  \overline{a^* x b^*}$
for all $ a\in \A, b \in \B, x \in {}_\A\!X_{\B}.$
\item
${}_\B\!\langle \bar{x}, \bar{y}\rangle:= \langle {x}, {y}\rangle_\B$
and
$\langle \bar{x}, \bar{y}\rangle_\A:= {}_\A\!\langle {x}, {y}\rangle$
for all $x,y \in {}_\A\!X_{\B}.$
\end{enumerate}
The following lemma is straightforward.
\begin{lemma}
Let ${}_{\A}\!X_\B$ be an 
$\A$--$\B$-imprimitivity bimodule 
with right finite  basis $\{ \eta_c\}_{c \in \E_C} \subset {}_{\A}\!X_\B$
and
left finite  basis $\{ \zeta_d\}_{d \in \E_D} \subset {}_{\A}\!X_\B$.
Then 
$\{ \eta_c\}_{c \in \E_C}$
and
$\{ \zeta_d\}_{d \in \E_D}
$
are finitely related in ${}_{\A}\!X_\B$
if and only if
$\{ \eta_c\}_{c \in \E_C}$ in ${}_{\A}\!X_\B$
and
the conjugate basis $\{ \bar{\zeta}_d\}_{d \in \E_D}
$
in 
$\overline{{}_{\A}\!X_\B}$
are finitely related.
\end{lemma}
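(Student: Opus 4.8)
The plan is to treat the claim as a purely formal translation between the two notions of ``finitely related'' using the dictionary furnished by the conjugate module. First I would record that the left finite basis $\{\zeta_d\}_{d\in\E_D}$ of ${}_\A\!X_\B$ becomes a right finite basis $\{\bar\zeta_d\}_{d\in\E_D}$ of the conjugate $\B$--$\A$-imprimitivity bimodule $\overline{{}_\A\!X_\B}$: using the conjugate right action $\bar\zeta_d\cdot a=\overline{a^*\zeta_d}$ and the identity $\langle\bar\zeta_d\mid\bar x\rangle_\A={}_\A\!\langle\zeta_d\mid x\rangle$, the left-basis expansion \eqref{eq:lbasis} turns into $\bar x=\sum_{d}\bar\zeta_d\langle\bar\zeta_d\mid\bar x\rangle_\A$, which is exactly the right-basis relation \eqref{eq:basis} for $\overline{{}_\A\!X_\B}$. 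Hence the second definition applies to the pair $\bigl({}_\A\!X_\B,\overline{{}_\A\!X_\B}\bigr)$ with $\xi_d=\bar\zeta_d$, and I would keep the same two $\{0,1\}$-matrices $\tilde C,\tilde D$ throughout. It then suffices to check that the four defining relations of one notion coincide, term by term, with those of the other.

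Second, the two ``linear'' relations are immediate from the definition of the conjugate inner products, namely ${}_\B\!\langle\xi_d\mid\xi_d\rangle={}_\B\!\langle\bar\zeta_d\mid\bar\zeta_d\rangle=\langle\zeta_d\mid\zeta_d\rangle_\B$ and $\langle\xi_d\mid\xi_d\rangle_\A=\langle\bar\zeta_d\mid\bar\zeta_d\rangle_\A={}_\A\!\langle\zeta_d\mid\zeta_d\rangle$. Substituting these into \eqref{eq:3.2eta} reproduces \eqref{eq:3.1eta} verbatim, and substituting into \eqref{eq:3.2xi} reproduces \eqref{eq:3.1zeta}.

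Third, for the two ``quadratic'' relations I would compute the left-hand sides of \eqref{eq:3.2etaxi} and \eqref{eq:3.2xieta} by unwinding the conjugate left action together with the imprimitivity identity ${}_\A\!\langle x\mid y\rangle\,z=x\,\langle y\mid z\rangle_\B$. For \eqref{eq:3.2etaxi}, since $\langle\eta_c\mid\eta_c\rangle_\B$ is self-adjoint the left $\B$-action gives $\langle\eta_c\mid\eta_c\rangle_\B\cdot\bar\zeta_d=\overline{\zeta_d\langle\eta_c\mid\eta_c\rangle_\B}=\overline{{}_\A\!\langle\zeta_d\mid\eta_c\rangle\eta_c}$, and then $\langle\bar x\mid\bar y\rangle_\A={}_\A\!\langle x\mid y\rangle$ together with left-linearity of the left inner product yields $\langle\langle\eta_c\mid\eta_c\rangle_\B\xi_d\mid\xi_d\rangle_\A={}_\A\!\langle\zeta_d\mid\eta_c\rangle\,{}_\A\!\langle\eta_c\mid\zeta_d\rangle$, so that \eqref{eq:3.2etaxi} becomes exactly \eqref{eq:3.1zetaeta}. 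Symmetrically, $\langle\xi_d\mid\xi_d\rangle_\A\eta_c={}_\A\!\langle\zeta_d\mid\zeta_d\rangle\eta_c=\zeta_d\langle\zeta_d\mid\eta_c\rangle_\B$ by the imprimitivity identity, and then axioms (2) and (4) of the right inner product give $\langle\langle\xi_d\mid\xi_d\rangle_\A\eta_c\mid\eta_c\rangle_\B=\langle\eta_c\mid\zeta_d\rangle_\B\langle\zeta_d\mid\eta_c\rangle_\B$, so that \eqref{eq:3.2xieta} becomes exactly \eqref{eq:3.1etazeta}.

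Since each of the four relations of one notion is thereby shown to be identical to the corresponding relation of the other for the same pair $\tilde C,\tilde D$, both implications follow at once. The only point requiring care — the ``obstacle,'' such as it is — is the bookkeeping of conjugate-linearity and adjoints in $\overline{{}_\A\!X_\B}$: one must track that a left $\B$-action on a conjugate vector becomes right multiplication by the adjoint, that the roles of the two inner products are interchanged under conjugation, and that positivity of $\langle\eta_c\mid\eta_c\rangle_\B$ is what lets the adjoint disappear. Once this dictionary is fixed no genuine analysis intervenes and the equivalence is purely formal, which is why the lemma is stated as straightforward.
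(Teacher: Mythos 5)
Your proof is correct: you verify that $\{\bar\zeta_d\}$ is a right finite basis of the conjugate module and then check, with the same matrices $\tilde C,\tilde D$, that each of the four relations \eqref{eq:3.2eta}--\eqref{eq:3.2xieta} for the pair $({}_\A\!X_\B,\overline{{}_\A\!X_\B})$ is literally identical to the corresponding relation \eqref{eq:3.1eta}--\eqref{eq:3.1etazeta}, which gives both implications at once. The paper omits the proof as ``straightforward,'' and your computation is exactly the intended verification.
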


For $\A=\B$, a right finite basis
$\{ s_a \}_{a \in \E_\A}$ with an index set $\E_\A$ 
of an $\A$--$\A$-imprimitivity bimodule 
${}_\A\!X_\A$ is said to be 
{\it finitely self-related} 
if there exist
an 
$\E_\A\times \E_\A$-matrix 
$\tilde{A}  = [\tilde{A}(a,b)]_{a,b \in \E_\A}$
 with entries in 
 $\{0,1\}$
such that 
\begin{align}
\langle s_a \mid s_a \rangle_\A  =& \sum_{b \in \E_\A}
\tilde{A}(a,b) 
{}_\A\!\langle s_b \mid s_b\rangle, \qquad a \in \E_\A, \label{eq:3.2sa}\\
\langle \langle s_a \mid s_a \rangle_\A s_b\mid s_b \rangle_\A
  =& 
\tilde{A}(a,b) \langle s_b \mid s_b\rangle_\A, \qquad a, b \in \E_\A. \label{eq:3.2sab}
\end{align}
If an $\A$--$\A$-imprimitivity bimodule 
${}_\A\!X_\A$ has a finitely self-related right finite basis,
it is called a  
{\it finitely self-related imprimitivity bimodule.}
\end{definition}

A $C^*$-algebra itself
$\A$ 
has a natural structure of $\A$--$\A$ imprimitivity bimodule by the inner products
\begin{equation}
\!\langle x \mid y \rangle_{\A}: = x^* y, \qquad
{}_\A\!\langle x \mid y \rangle:= x y^*
\quad \text{ for }\quad x,  y \in \A. \label{eq:AAbimodule}
\end{equation}
We write the $\A$--$\A$ imprimitivity bimodule as 
${}_\A\!\A_{\A}$ and call it the identity $\A$--$\A$ imprimitivity bimodule.
If the identity $\A$--$\A$ imprimitivity  
bimodule ${}_\A\!\A_{\A}$ has a finitely self-related right finite basis
$\{s_a \}_{a \in \E_\A}$,
the $C^*$-algebra $\A$ with the basis
$\{s_a\}_{a \in \E_\A}$ 
is said to be {\it finitely self-related}.
 \begin{lemma}\label{lem:OAbasis}
Let
$A=[A(i,j)]_{i,j=1}^{N}$
be a nonnegative irreducible matrix.
Let $\{ S_a \}_{a \in E_A}$ be the generating partial isometries 
of the Cuntz--Krieger algebra $\OA$ satisfying the relations \eqref{eq:OAG}
for the matrix $A^G$.
Then $\{ S_a \}_{a \in E_A}$ is a finitely self-related right finite basis of the identity
$\OA$--$\OA$-imprimitivity bimodule ${}_\OA\!\OA_{\OA}$. 
\end{lemma}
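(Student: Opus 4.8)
The plan is to exhibit the witnessing matrix explicitly as the edge transition matrix $A^G$ of \eqref{eq:AG} and then check that every clause in the definition of a finitely self-related right finite basis reduces to one of the two Cuntz--Krieger relations \eqref{eq:OAG}. First I would unwind the inner products on the identity bimodule ${}_\OA\!\OA_\OA$ from \eqref{eq:AAbimodule}, namely $\langle x \mid y \rangle_\OA = x^* y$ and ${}_\OA\!\langle x \mid y \rangle = x y^*$. With these, the right finite basis condition \eqref{eq:basis} for $\{S_a\}_{a \in E_A}$ reads $x = \sum_{a \in E_A} S_a S_a^* x$ for all $x \in \OA$, which is immediate from the first relation $\sum_{a} S_a S_a^* = 1$; equivalently, since $\OA$ is unital and simple, hence has center $\mathbb{C}$, one may simply invoke \eqref{eq:remarkbasis}.

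Taking $\tilde{A} = A^G$, whose entries lie in $\{0,1\}$ by \eqref{eq:AG} as required, the first self-relation \eqref{eq:3.2sa} becomes $S_a^* S_a = \sum_{b \in E_A} A^G(a,b) S_b S_b^*$, which is verbatim the second Cuntz--Krieger relation in \eqref{eq:OAG}; so this clause needs nothing beyond translating the definitions. The only genuine computation is the second self-relation \eqref{eq:3.2sab}. Expanding its left-hand side with the above inner products gives $\langle \langle S_a \mid S_a \rangle_\OA S_b \mid S_b \rangle_\OA = S_b^* S_a^* S_a S_b$, and substituting the second relation of \eqref{eq:OAG} turns this into $\sum_{c \in E_A} A^G(a,c)\, (S_b^* S_c)(S_c^* S_b)$. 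Here I would use that $\{S_c S_c^*\}_{c \in E_A}$ is a family of mutually orthogonal projections, so that inserting the final projections $S_b^* = S_b^*(S_b S_b^*)$ and $S_c = (S_c S_c^*) S_c$ yields $S_b^* S_c = \delta_{b,c}\, S_b^* S_b$. This collapses the sum to its single surviving term $c = b$, giving $A^G(a,b)(S_b^* S_b)(S_b^* S_b) = A^G(a,b)\, S_b^* S_b = A^G(a,b)\,\langle S_b \mid S_b \rangle_\OA$, which is exactly the right-hand side of \eqref{eq:3.2sab}.

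The step requiring the most care, though still elementary, is the orthogonality relation $S_b^* S_c = \delta_{b,c}\, S_b^* S_b$. It follows from the standard fact that projections summing to $1$ are mutually orthogonal: from $S_b^*(1 - S_b S_b^*) = 0$ one gets $S_b^* \big(\sum_{c \neq b} S_c S_c^*\big) = 0$, and a positivity argument forces each $S_b S_b^* \cdot S_c S_c^* = 0$ for $c \neq b$, whence $(S_b S_b^*)(S_c S_c^*) = 0$ and $S_b^* S_c = S_b^*(S_b S_b^*)(S_c S_c^*) S_c = 0$. Everything else in the statement is a direct rewriting of the Cuntz--Krieger relations \eqref{eq:OAG} in the language of the identity $\OA$--$\OA$-imprimitivity bimodule, so once this orthogonality is in hand the verification is complete.
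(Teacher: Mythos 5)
Your proposal is correct and follows essentially the same route as the paper: take $\tilde{A}=A^G$, read the right finite basis condition off $\sum_a S_aS_a^*=1$, identify \eqref{eq:3.2sa} with the second Cuntz--Krieger relation, and verify \eqref{eq:3.2sab} by computing $S_b^*S_a^*S_aS_b$ and collapsing the sum via orthogonality of the range projections $S_cS_c^*$. The paper merely applies the orthogonality in the form $S_{a'}S_{a'}^*S_b=\delta_{a',b}S_b$ rather than spelling out the positivity argument, but the computation is the same.
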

\begin{proof}
Let
 $\langle\hspace{3mm}\mid\hspace{3mm}\rangle_A$
denote the $\OA$-valued right inner product 
$\langle\hspace{3mm}\mid\hspace{3mm}\rangle_\OA$
defined by the left one in \eqref{eq:AAbimodule}.
Since 
\begin{equation}
x = \sum_{a \in E_A} S_a S_a^* x = \sum_{a\in E_A}S_a\langle S_a \mid x\rangle_A,
\qquad x \in {}_\OA\!\OA _{\OA} =\OA,
\end{equation} 
$\{ S_a \}_{a \in E_A}$ is a right finite basis of
${}_\OA\!\OA _{\OA}$.
The second relation of  \eqref{eq:OAG} is rephrased as \eqref{eq:3.2sa}
for the matrix $\tilde{A} = A^G$.
We also have
\begin{align*}
\langle \langle S_a \mid S_a \rangle_A S_b\mid S_b \rangle_A
= & ((S_a^* S_a)S_b)^* S_b \\
= & S_b^* \sum_{a' \in E_A} A^G(a, a') S_{a'}S_{a'}^* S_b \\
= &  A^G(a, b) S_b^* S_b \\
= & A^G(a, b) \langle S_b \mid S_b\rangle_A, \qquad a, b \in E_A.
\end{align*}
Hence the basis $\{S_a\}_{a \in E_A}$
is finitely self-related. 
\end{proof}

For 
an $\A$--$\B$-imprimitivity bimodule ${}_{\A}\!X_\B$ and
a $\B$--$\A$-imprimitivity bimodule ${}_{\B}\!Y_\A$,
there is a natural method to construct 
$\A$--$\A$-imprimitivity bimodule 
${}_{\A}\!X_\B\otimes_{\B} {}_{\B}\!Y_\A$
 and
$\B$--$\B$-imprimitivity bimodule 
${}_{\B}\!Y_\A\otimes_{\A} {}_{\A}\!X_\B$
as relative tensor products of Hilbert $C^*$-bimodules as seen in
\cite[Theorem 5.9]{Rieffel1} 
(cf. \cite[Definition 1.20]{KW} and \cite[Proposition 3.16]{RW}).
The $\A$-bimodule structure of 
${}_{\A}\!X_\B\otimes_{\B} {}_{\B}\!Y_\A$
is given by
\begin{equation*}
a(x \otimes_\B y) := ax \otimes_\B y, \qquad
(x \otimes_\B y)a := x \otimes_\B ya
\quad
\text{ for }
x\in {}_{\A}\!X_\B,\,
y \in {}_{\B}\!Y_\A, \,
a \in \A.
\end{equation*}
The $\A$-valued right inner product and 
the $\A$-valued left inner product of  
${}_{\A}\!X_\B\otimes_{\B} {}_{\B}\!Y_\A$
are given by
\begin{align}
{}_\A\!\langle x_1\otimes_\B y_1 \mid x_2\otimes_\B y_2\rangle 
:=& {}_\A\!\langle x_1 {}_\B\!\langle y_1 \mid y_2\rangle \mid  x_2 \rangle,
 \label{eq:3.Atensor}\\
\langle x_1\otimes_\B y_1\mid x_2\otimes_\B y_2\rangle_\A 
:=& \langle y_1 \mid  \langle x_1 \mid x_2\rangle_\B y_2 \rangle_\A. \label{eq:3.tensorA}
\end{align}
Similarly we have 
$\B$--$\B$-imprimitivity bimodule 
${}_{\B}\!Y_\A\otimes_{\A} {}_{\A}\!X_\B$.

\begin{definition}\label{defn:EME}
Let $\A$ and $\B$ be $C^*$-algebras.
Finitely self-related 
imprimitivity bimodules 
$({}_\A\!Z^1_{\A}, \{ s_a \}_{a \in \E_\A})$
and  
$({}_\B\!Z^2_{\B}, \{ t_b \}_{b \in \E_\B})$
with
 finitely self-related  right finite bases 
$\{ s_a \}_{a \in \E_\A }$ 
and
$\{ t_b \}_{b \in \E_\B }$ 
respectively
are said to  be 
{\it basis relatedly elementary Morita equivalent}\/
if there exist 
finitely related 
$\A$--$\B$-imprimitivity bimodule 
${}_{\A}\!X_\B$
and
 $\B$--$\A$-imprimitivity bimodule
${}_{\B}\!Y_\A$ 
with finitely related right finite bases 
$\{ \eta_c\}_{c \in \E_C} \subset {}_{\A}\!X_\B$
and  $\{ \xi_d\}_{d \in \E_D} \subset {}_{\B}\!Y_\A$,
respectively
such that 
there exist
maps 
$c: \E_\A \cup \E_\B \longrightarrow \E_C, \,
 d: \E_\A \cup \E_\B \longrightarrow \E_D
$
such that
the correspondences
\begin{align}
\phi_\A: s_a\in  {}_\A\!Z^1_{\A}& \longrightarrow 
       \eta_{c(a)} \otimes_\B \xi_{d(a)} \in {}_{\A}\!X_\B\otimes_{\B}{}_{\B}\!Y_\A, 
\label{eq:phi}\\
\phi_\B: t_b \in {}_\B\!Z^2_{\B}& \longrightarrow 
      \xi_{d(b)}\otimes_\A \eta_{c(b)} \in {}_{\B}\!Y_\A\otimes_{\A}{}_{\A}\!X_\B
\label{eq:psi}
\end{align}
give rise to bimodule isomorphisms,
where a bimodule isomorphism means an isomorphism
of bimodule which preserves their left and right inner products,
respectively.
(cf. \cite[p. 57]{RW}).
\end{definition}
The isomorphisms $\phi_\A, \phi_\B$ are called 
{\it basis relatedly bimodule isomorphisms}.
If  pairs 
$({}_\A\!Z^1_{\A}, \{ s_a \}_{ a \in \E_\A })$
and  
$({}_\B\!Z^2_{\B}, \{ t_b \}_{ b \in \E_\B })$ 
are connected by a finite sequence of basis related  elementary Morita equivalences, 
then they are said to be {\it basis relatedly Morita equivalent}.

\begin{definition}
\begin{enumerate}
\renewcommand{\theenumi}{\roman{enumi}}
\renewcommand{\labelenumi}{\textup{(\theenumi)}}
\item 
A pair $(\A, \{s_a\}_{a \in E_\A})$
of a $C^*$-algebra and a finitely self-related right finite basis 
of the identity imprimitivity bimodule 
${}_\A\!\A_\A$ is called {\it a $C^*$-algebra with right finite basis}.
\item
Two $C^*$-algebras $(\A,\{s_a\}_{a \in E_\A})$ 
and $(\B,\{t_b\}_{b \in E_\B})$ with right finite bases
are said to be {\it basis relatedly elementary Morita equivalent}
if the identity imprimitivity bimodules
$({}_\A\!\A_{\A}, \{ s_a \}_{a \in \E_\A})$
and  
$({}_\B\!\B_{\B}, \{ t_b \}_{b \in \E_\B})$
are basis relatedly elementary Morita equivalent.
They are said to 
 {\it basis relatedly Morita equivalent}\/ 
 if $({}_\A\!\A_{\A}, \{ s_a \}_{ a \in \E_\A})$
and  
$({}_\B\!\B_{\B}, \{ t_b \}_{ b \in \E_\B })$
are basis relatedly Morita equivalent.
\end{enumerate}
\end{definition}
By Lemma \ref{lem:OAbasis}, 
the Cuntz--Krieger algebra $(\OA,\{S_a\}_{a \in E_A})$ 
 with the generating partial isometries $\{S_a\}_{a \in E_A}$
satisfying \eqref{eq:OAG} 
is a $C^*$-algebra with right finite basis. 
The right finite basis 
$\{S_a\}_{a \in E_A}$
is called the {\it canonical}\/ right finite basis of $\OA$.
\begin{theorem}\label{thm:main1}
Assume that nonnegative square irreducible matrices $A$ and $B$ are elementary equivalent such that
$A  = CD, B = DC$
for some nonnegative rectangular matrices $C,D$.
Then the Cuntz--Krieger algebras  
$(\OA, \{S_a\}_{a\in E_A})$ and 
$(\OB, \{S_b\}_{b\in E_B})$ 
with the canonical  right finite bases
are basis relatedly elementary Morita equivalent.
\end{theorem}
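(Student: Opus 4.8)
The plan is to realize both $\OA$ and $\OB$ as complementary full corners of a single Cuntz--Krieger algebra attached to a bipartite graph, and then to read off the required imprimitivity bimodules, the finitely related bases, and the isomorphisms $\phi_\OA,\phi_\OB$ directly from the off-diagonal corners. First I would form the $2\times2$ block matrix
\[
Z=\begin{pmatrix} 0 & C\\ D & 0\end{pmatrix},
\]
whose directed graph $G_Z$ has vertex set $V_A\sqcup V_B$, a set $\E_C$ of edges from $V_A$ to $V_B$ realizing $C$, and a set $\E_D$ of edges from $V_B$ to $V_A$ realizing $D$. Since $Z^2=A\oplus B$ with $A,B$ irreducible and not permutations, $Z$ is irreducible and not a permutation, so $\OZ$ is simple and purely infinite and is generated by partial isometries $S_c\ (c\in\E_C)$ and $S_d\ (d\in\E_D)$ satisfying \eqref{eq:OAG} for the edge matrix $Z^G$. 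I then set $p_A=\sum_{c\in\E_C}S_cS_c^*$ and $p_B=\sum_{d\in\E_D}S_dS_d^*$, the sums of the vertex projections over $V_A$ and over $V_B$; these satisfy $p_A+p_B=1$ and are full in the simple algebra $\OZ$.

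Next, each edge $a\in E_A$ factors uniquely as a length-two path $c(a)d(a)$ in $G_Z$ with $c(a)\in\E_C$, $d(a)\in\E_D$, and each $b\in E_B$ as $d(b)c(b)$; this defines the maps $c,d\colon E_A\cup E_B\to\E_C,\E_D$ required by Definition \ref{defn:EME}. A direct verification of \eqref{eq:OAG} shows that $S_a\mapsto S_{c(a)}S_{d(a)}$ and $S_b\mapsto S_{d(b)}S_{c(b)}$ implement isomorphisms $\OA\cong p_A\OZ p_A$ and $\OB\cong p_B\OZ p_B$ carrying the canonical bases to the length-two path generators. I then put $X:=p_A\OZ p_B$ and $Y:=p_B\OZ p_A$, which as complementary full corners are an $\OA$--$\OB$- and an $\OB$--$\OA$-imprimitivity bimodule for the inner products induced by \eqref{eq:AAbimodule}. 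Because $\sum_{c}S_cS_c^*=p_A$ acts as the identity on $X$ and $\sum_{d}S_dS_d^*=p_B$ as the identity on $Y$, the families $\eta_c:=S_c\ (c\in\E_C)$ and $\xi_d:=S_d\ (d\in\E_D)$ are right finite bases of $X$ and $Y$.

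To see that these bases are finitely related I would take $\tilde C,\tilde D$ to be the off-diagonal blocks of $Z^G$, so $\tilde C(c,d)=1$ iff $t(c)=s(d)$ and $\tilde D(d,c)=1$ iff $t(d)=s(c)$. Writing $P_v$ for the vertex projections, the relation $S_c^*S_c=P_{t(c)}=\sum_{d}\tilde C(c,d)S_dS_d^*$ and its symmetric counterpart for $\xi_d$ give \eqref{eq:3.2eta} and \eqref{eq:3.2xi}, while the mixed relations \eqref{eq:3.2etaxi} and \eqref{eq:3.2xieta} reduce to the pointwise identities $P_{t(c)}S_d=\tilde C(c,d)S_d$ and $P_{t(d)}S_c=\tilde D(d,c)S_c$, which are immediate from \eqref{eq:OAG}. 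Finally, the multiplication map $x\otimes y\mapsto xy$ identifies $X\otimes_{\OB}Y$ with $p_A\OZ p_A$ and $Y\otimes_{\OA}X$ with $p_B\OZ p_B$; under the isomorphisms of the previous paragraph it sends $\eta_{c(a)}\otimes_{\OB}\xi_{d(a)}=S_{c(a)}S_{d(a)}$ to $S_a$ and $\xi_{d(b)}\otimes_{\OA}\eta_{c(b)}$ to $S_b$, so the prescribed correspondences $\phi_\OA,\phi_\OB$ of \eqref{eq:phi} and \eqref{eq:psi} are precisely these bimodule isomorphisms.

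The step I expect to be the main obstacle is verifying that the multiplication map is genuinely an inner-product-preserving bimodule isomorphism onto the full corner, and not merely a bimodule homomorphism. This requires checking compatibility with the relative-tensor inner products \eqref{eq:3.Atensor} and \eqref{eq:3.tensorA}—a short computation exhibiting $x_1\otimes y_1,\,x_2\otimes y_2\mapsto x_1y_1y_2^*x_2^*$ consistently on both sides—together with surjectivity, which follows from simplicity of $\OZ$ via $p_A\OZ p_B\cdot p_B\OZ p_A=p_A\OZ p_B\OZ p_A=p_A\OZ p_A$. Once these are in place, the quintuplet built from $X$, $Y$, $\{\eta_c\}$, $\{\xi_d\}$, $\tilde C$, $\tilde D$ witnesses the basis relatedly elementary Morita equivalence of $(\OA,\{S_a\}_{a\in E_A})$ and $(\OB,\{S_b\}_{b\in E_B})$.
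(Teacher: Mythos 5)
Your proposal is correct and follows essentially the same route as the paper: both form the bipartite matrix $Z$, realize $\OA$ and $\OB$ as the complementary full corners $P_A\OZ P_A$ and $P_B\OZ P_B$, take $\eta_c=S_c$, $\xi_d=S_d$ with $\tilde C=C^G$, $\tilde D=D^G$, and identify the relative tensor products with the corners via the multiplication map. The only (immaterial) difference is that you verify fullness of the projections and the inner-product-preserving property of the multiplication map directly, where the paper cites \cite[Lemma 3.1]{MaETDS2004} and \cite[Proposition 3.28]{RW}.
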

\begin{proof}
Suppose that two nonnegative square matrices $A$ and $B$ 
are elementary equivalent
such that $A =CD$ and $B = DC$.
The sizes of the matrices
$A$ and $B$ are denoted by
$N$ and $M$ respectively,
so that 
$C$ is an $N \times M$ matrix and 
$D$ is an $M \times N$ matrix, respectively. 
 We set the square matrix
$
Z =
\begin{bmatrix}
0 & C \\
D & 0
\end{bmatrix}
$
as a block matrix, and we see
\begin{equation*}
Z^2 =
\begin{bmatrix}
CD & 0 \\
0 & DC
\end{bmatrix}
=
\begin{bmatrix}
A & 0 \\
0 & B
\end{bmatrix}.
\end{equation*}
Similarly to \label{eq:AG}
the directed graph 
$G_A =(V_A,E_A)$
for the matrix $A$,
let  us denote by
$
G_B =(V_B,E_B),
G_C =(V_C, E_C),
G_D =(V_D, E_D)
$
and
$
G_Z =(V_Z,E_Z)
$
the associated directed graphs to the nonnegative matrices
$ B, C, D$ and $Z$, respectively,
so that
$V_C = V_D = V_Z = V_A \cup V_B$ and $E_Z = E_C \cup E_D$.  
By the equalities $A=CD$ and $B = DC$,
we may take and fix bijections
$\varphi_{A,CD}$ from $E_A$ to a subset of $E_C \times E_D$
and
$\varphi_{B,DC}$ from $E_B$ to a subset of $E_D \times E_C$.
Let 
$ S_c, S_d, c \in E_C, d \in E_D$ 
be the generating partial isometries of 
the Cuntz--Krieger algebra 
$\OZ$
for the matrix $Z$, so that
$
\sum_{c \in E_C} S_c S_c^*
+
\sum_{d \in E_D} S_d S_d^*
=1$
and
\begin{align}
S_c^* S_c & = \sum_{d \in E_D}Z^G(c,d) S_d S_d^*
= \sum_{d \in E_D}C^G(c,d) S_d S_d^*, 
\qquad c \in E_C, \label{eq:3.7Sc}\\
S_d^* S_d & = \sum_{c \in E_C}Z^G(d,c) S_c S_c^*
= \sum_{c \in E_C}D^G(d,c) S_c S_c^*
\qquad d \in E_D, \label{eq:3.7Sd}
\end{align}
for $c \in E_C, d \in E_D$,
where $Z^G, C^G, D^G$ are the matrices with entries in $\{0,1\}$
defined by  similar ways to (1.1) 
from the directed graphs 
$G_Z, G_C, G_D$, respectively. 
Since 
$S_c S_d \ne 0$ 
(resp. $S_d S_c \ne 0$) 
if and only if 
$\varphi_{A,CD}(a) = cd$
(resp.
$\varphi_{B,DC}(b) = dc$)
 for some $a \in E_A$
(resp. $b \in E_B$),
we may identify $cd$ (resp. $dc$) 
with $a$ (resp. $b$)
through the map 
$\varphi_{A,CD}$ (resp. $\varphi_{B,DC}$).
In this case,
we may write $a = c(a) d(a) \in E_C\times E_D$
(resp. $b = d(b) c(b) \in E_D\times E_C$)
if $\varphi_{A,CD}(a) = cd$
(resp. $\varphi_{B,DC}(b) = dc$)
to have maps
\begin{equation*}
c : E_A\cup E_B \longrightarrow E_C,
\qquad
d : E_A\cup E_B \longrightarrow E_D.
\end{equation*}
We may then write 
$S_{c(a) d(a)} = S_a$
(resp. $S_{d(b) c(b)} = S_b$)
where $S_{c(a) d(a)}$ denotes $S_{c(a)} S_{d(a)}$ 
(resp. $S_{d(b) c(b)}$ denotes $S_{d(b)} S_{c(b)}$).
Let us  define two particular projections $P_A$ and $P_B$ in $\OZ$ by 
$P_A = \sum_{c \in E_C} S_c S_c^*$
and
$P_B = \sum_{d \in E_D} S_d S_d^*$
so that $P_A + P_B =1$.
It has been shown in \cite{MaETDS2004} (cf. \cite{MaMZ2016})
that 
\begin{equation}
P_A \OZ P_A = \OA, \qquad P_B \OZ P_B = \OB.
\label{eq:4.1}
\end{equation}
We set 
\begin{equation}
{}_{A}\!X_B = P_A \OZ P_B, \qquad
{}_{B}\!Y_A = P_B \OZ P_A.
\end{equation}
We equip $\OZ$ with the structure of  the identity $\OZ$--$\OZ$-imprimitivity bimodule 
defined by 
\eqref{eq:AAbimodule}.
As the submodules of ${}_\OZ\!\OZ_{\OZ}$,
${}_{A}\!X_B$ and
${}_{B}\!Y_A$ 
 become 
$P_A \OZ P_A$--$P_B\OZ P_B$-imprimitivity bimodule
and
$P_B \OZ P_B$--$P_A\OZ P_A$-imprimitivity bimodule,
respectively.
As in \cite[Lemma 3.1]{MaETDS2004},
both projections $P_A$ and $P_B$ are full projections so that  
${}_{A}\!X_B$
and
${}_{B}\!Y_A$
are  
$\OA$--$\OB$ and $\OB$--$\OA$ imprimitivity bimodules.
We set
\begin{equation}
\eta_c = S_c \quad \text{ for } c \in E_C,
\qquad
\xi_d = S_d \quad \text{ for } d \in E_D.
\end{equation}
The relations \eqref{eq:3.7Sc} and \eqref{eq:3.7Sd}
 imply the equalities
\eqref{eq:3.2eta} and \eqref{eq:3.2xi}, respectively
for $\tilde{C} = C^G$ and $\tilde{D} = D^G$.
We denote by
$\langle\hspace{3mm}\mid\hspace{3mm}\rangle_A$
and
$\langle\hspace{3mm}\mid\hspace{3mm}\rangle_B$
the $\OA$-valued right inner product 
$\langle\hspace{3mm}\mid\hspace{3mm}\rangle_\OA$
in ${}_{B}\!Y_A$ and
the $\OB$-valued right inner product 
$\langle\hspace{3mm}\mid\hspace{3mm}\rangle_\OB$
in ${}_{A}\!X_B$,
respectively.
We also have
\begin{align*}
\langle \langle \eta_c \mid \eta_c \rangle_B \xi_d
\mid \xi_d \rangle_A 
& =( (S_c^* S_c)S_d )^* S_d \\
& = S_d^* S_c^* S_c S_d \\
& = S_d^*(\sum_{d' \in E_D}C^G(c,d') S_{d'} S_{d'}^*) S_d \\
& = C^G(c,d) S_{d}^* S_{d} \\
& = C^G(c,d) \langle \xi_d \mid \xi_d \rangle_A. 
\end{align*}
Hence we have the equality 
\eqref{eq:3.2etaxi}
for $\tilde{C} = C^G$,
and similarly 
\eqref{eq:3.2xieta}
for $\tilde{D} = D^G$.
Therefore 
${}_{A}\!X_B$
and
${}_{B}\!Y_A$
are finitely related imprimitivity bimodules.
We note that the $\OB$--$\OA$-imprimitivity bimodule
${}_{B}\!Y_A =P_B\OZ P_A$  
is the conjugate bimodule
of ${}_{A}\!X_B =P_A\OZ P_B$, because
 the correspondence
$$
\varPhi: \bar{x} \in \overline{P_A\OZ P_B}
\longrightarrow
x^* \in P_B\OZ P_A
$$
yields an isomorphism of imprimitivity bimodules.
By \cite[Proposition 3.28]{RW},
 the correspondence
\begin{equation}
\Psi: x\otimes_\OB y^* \in {}_{A}\!X_B\otimes_{\OB}{}_{B}\!Y_A
\longrightarrow
{}_A\!\langle x,y \rangle \in
\OA \quad \text{ for }
x,y \in {}_{A}\!X_B = P_A\OZ P_B
\end{equation}
yields an isomorphism
between the $\OA$--$\OA$-imprimitive bimodules
${}_{A}\!X_B\otimes_{\OB}{}_{B}\!Y_A$
and
${}_{\OA}\!{\OA}_{\OA}=\OA.$
As
$$
\Psi(\eta_{c(a)} \otimes_{\OB}\xi_{d(a)})
=
\Psi(S_{c(a)} \otimes_{\OB} S_{d(a)})
={}_A\!\langle S_{c(a)}, S_{d(a)}^* \rangle
= S_{c(a)}S_{d(a)}
= S_a
$$
for $a \in E_A$,
the correspondence
\begin{equation*}
S_a \in \OA  \longrightarrow \eta_{c(a)} \otimes_\OB \xi_{d(a)} \in 
{}_{A}\!X_B\otimes_{\OB}{}_{B}\!Y_A
\end{equation*}
gives rise to a basis related bimodule isomorphism between
${}_{A}\!X_B\otimes_{\OB}{}_{B}\!Y_A$
and
${}_{\OA}\!{\OA}_{\OA}=\OA.$
We similarly know that 
\begin{equation*}
S_b \in \OB  \longrightarrow \xi_{d(b)} \otimes_\OA \eta_{c(b)} 
\in {}_{B}\!Y_A\otimes_{\OA}{}_{A}\!X_B    
\end{equation*}
gives rise to a basis related  bimodule isomorphism
between 
${}_{B}\!Y_A\otimes_{\OA}{}_{A}\!X_B$
and
${}_{\OB}\!{\OB}_{\OB}=\OB.$
Therefore  
the Cuntz--Krieger algebras  
$(\OA, \{S_a\}_{a\in E_A})$ and 
$(\OB, \{S_b\}_{b\in E_B})$ 
with the canonical right finite bases
are basis relatedly elementary Morita equivalent.
\end{proof}
As seen in the above proof,
under the condition that $A=CD, B =DC$,
one may take ${}_B\!Y_A$ as the conjugate bimodule of
${}_A\!X_B$, 
so that one obtains the following corollary.
\begin{corollary}
Assume that nonnegative square irreducible matrices 
$A, B$ are elementary equivalent such that
$A=CD, B=DC$.
Then there exists a bipartitely related $\OA$--$\OB$-imprimitivity
bimodule 
$({}_A\!X_B, \{S_c\}_{c \in E_C}, \{S^*_d\}_{d \in E_D}, C^G, D^G)$
such that 
$({}_A\!X_B, \{S_c\}_{c \in E_C})$
and its conjugate
$(\overline{{}_A\!X_B}, \{S_d\}_{d \in E_D})$   
give rise to a basis related elementary Morita equivalence
between
$(\OA, \{S_a\}_{a \in E_A})$ and
$(\OB, \{S_b\}_{b \in E_B})$.
\end{corollary}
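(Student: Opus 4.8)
The plan is to repackage the data already produced in the proof of Theorem~\ref{thm:main1} as a single bipartitely related bimodule, the only genuinely new ingredient being the correct choice of left finite basis. In that proof we built ${}_A\!X_B=P_A\OZ P_B$ with right finite basis $\{\eta_c=S_c\}_{c\in E_C}$, together with its conjugate ${}_B\!Y_A=P_B\OZ P_A$ and the isomorphism $\varPhi\colon\overline{{}_A\!X_B}\to{}_B\!Y_A$, $\bar x\mapsto x^*$. Since $\varPhi$ sends $\xi_d=S_d\in{}_B\!Y_A$ back to $\overline{S_d^*}$, the left finite basis of ${}_A\!X_B$ that I would use is $\{\zeta_d:=S_d^*\}_{d\in E_D}$, which is exactly the basis named in the statement.

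First I would confirm the two basis properties. For the right basis, any $x\in P_A\OZ P_B$ satisfies $x=P_Ax=\sum_{c\in E_C}S_c(S_c^*x)=\sum_{c}S_c\langle S_c\mid x\rangle_B$, and $S_c^*x\in P_B\OZ P_B=\OB$ by \eqref{eq:3.7Sc}; for the left basis, $x=xP_B=\sum_{d\in E_D}(xS_d)S_d^*=\sum_{d}{}_A\!\langle x\mid S_d^*\rangle S_d^*$ with $xS_d\in P_A\OZ P_A=\OA$ by \eqref{eq:3.7Sd}. Next I would verify the four defining identities \eqref{eq:3.1eta}--\eqref{eq:3.1etazeta} with $\tilde C=C^G$ and $\tilde D=D^G$. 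Using $\langle x\mid y\rangle_B=x^*y$ and ${}_A\!\langle x\mid y\rangle=xy^*$ from \eqref{eq:AAbimodule}, each identity collapses to a Cuntz--Krieger relation: $\langle S_c\mid S_c\rangle_B=S_c^*S_c=\sum_d C^G(c,d)S_dS_d^*=\sum_d C^G(c,d)\langle S_d^*\mid S_d^*\rangle_B$ yields \eqref{eq:3.1eta}; ${}_A\!\langle S_d^*\mid S_d^*\rangle=S_d^*S_d=\sum_c D^G(d,c)S_cS_c^*$ yields \eqref{eq:3.1zeta}; and the products $S_d^*S_c^*S_cS_d=C^G(c,d)S_d^*S_d$ and $S_c^*S_d^*S_dS_c=D^G(d,c)S_c^*S_c$ yield \eqref{eq:3.1zetaeta} and \eqref{eq:3.1etazeta}. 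This establishes that the quintuplet $({}_A\!X_B,\{S_c\}_{c\in E_C},\{S_d^*\}_{d\in E_D},C^G,D^G)$ is a bipartitely related $\OA$--$\OB$-imprimitivity bimodule.

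Finally I would invoke the conjugation lemma (the unlabelled lemma stated just after the definition of the conjugate module): it converts the bipartite relatedness of $\{S_c\}$ and $\{S_d^*\}$ inside ${}_A\!X_B$ into the statement that $({}_A\!X_B,\{S_c\})$ and $(\overline{{}_A\!X_B},\{\overline{S_d^*}\})$ are finitely related imprimitivity bimodules. Transporting the conjugate module along $\varPhi$ identifies $\overline{S_d^*}$ with $S_d$, so one recovers exactly the $\OA$--$\OB$ and $\OB$--$\OA$ bimodules with bases $\{S_c\}$ and $\{S_d\}$ appearing in Theorem~\ref{thm:main1}. The basis related bimodule isomorphisms $S_a\mapsto\eta_{c(a)}\otimes_{\OB}\xi_{d(a)}$ and $S_b\mapsto\xi_{d(b)}\otimes_{\OA}\eta_{c(b)}$ constructed there then supply the asserted basis related elementary Morita equivalence between $(\OA,\{S_a\}_{a\in E_A})$ and $(\OB,\{S_b\}_{b\in E_B})$. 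I expect no real obstacle: every computation is immediate from \eqref{eq:3.7Sc}--\eqref{eq:3.7Sd}, and the single point that requires care---flagged by the remark preceding the corollary---is the bookkeeping of the conjugation, which forces the left basis of ${}_A\!X_B$ to be taken as $\{S_d^*\}$ rather than $\{S_d\}$.
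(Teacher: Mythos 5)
Your proposal is correct and follows essentially the same route as the paper, which proves the corollary simply by pointing back to the proof of Theorem \ref{thm:main1} and the observation that ${}_B\!Y_A$ is the conjugate of ${}_A\!X_B$ via $\varPhi(\bar{x})=x^*$; your explicit verification that $\{S_d^*\}_{d\in E_D}$ is the left finite basis making $({}_A\!X_B,\{S_c\},\{S_d^*\},C^G,D^G)$ bipartitely related, reduced to the relations \eqref{eq:3.7Sc}--\eqref{eq:3.7Sd}, is exactly the bookkeeping the paper leaves implicit. The use of the conjugation lemma to pass from the bipartite relatedness inside ${}_A\!X_B$ to the finitely related pair $({}_A\!X_B,\overline{{}_A\!X_B})$ is also the intended mechanism.
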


\section{Converse implication  of Theorem \ref{thm:main1}}
In this section, we will show the converse implication of Theorem \ref{thm:main1}.
Throughout this section, we assume that 
the Cuntz--Krieger algebras
$(\OA, \{S_a \}_{ a \in E_A})$  
and 
$(\OB,\{S_b \}_{b \in E_B })$
with their canonical right finite bases
are basis relatedly elementary Morita equivalent 
by 
finitely related $\OA$--$\OB$-imprimitivity bimodule
${}_\OA\!X_\OB$ written ${}_A\! X_B$ with right finite basis $\{\eta_c\}_{c \in \E_C}$
and 
$\OB$--$\OA$-imprimitivity bimodule
${}_\OB\!Y_\OA$ written ${}_B\!Y_A$ with right finite basis $\{\xi_d\}_{d \in \E_D}$,
where
$\E_C$ and $\E_D$ are finite index sets, respectively.
Let us denote by 
$\langle\hspace{3mm}\mid\hspace{3mm}\rangle_A$
and
${}_A\!\langle\hspace{3mm}\mid\hspace{3mm}\rangle$
the $\OA$-valued right inner product of  ${}_B\!Y_A$
and
the $\OA$-valued left inner product of  ${}_A\!X_B$,
respectively.
Similar notations 
$\langle\hspace{3mm}\mid\hspace{3mm}\rangle_B$
and
${}_B\!\langle\hspace{3mm}\mid\hspace{3mm}\rangle$
will be used.
By Definition \ref{defn:EME}, one may take  
maps 
$c: E_A \cup E_B \longrightarrow \E_C, \,
 d: E_A \cup E_B \longrightarrow \E_D
$
such that
the correspondences
\begin{align}
\phi_A: S_a\in  {}_\OA\!\OA_{\OA}& \longrightarrow 
       \eta_{c(a)} \otimes_\OB \xi_{d(a)} \in {}_{A}\!X_B\otimes_{\OB}{}_{B}\!Y_A, 
\label{eq:phiA}\\
\phi_B: t_b \in {}_\OB\!\OB_{\OB}& \longrightarrow 
      \xi_{d(b)}\otimes_\OB \eta_{c(b)} \in {}_{B}\!Y_A\otimes_{\OA}{}_{A}\!X_B
\label{eq:phiB}
\end{align}
give rise to bimodule isomorphisms.
We take $\E_C \times \E_D$ matrix $\tilde{C} =[\tilde{C}(c,d)]_{c \in \E_C, d \in \E_D}$
and     $\E_D \times \E_C$ matrix $\tilde{D} =[\tilde{D}(d,c)]_{d \in \E_D, c \in \E_C}$
satisfying
\eqref{eq:3.2eta}, \eqref{eq:3.2xi}, \eqref{eq:3.2etaxi}, \eqref{eq:3.2xieta}.
The relative tensor product ${}_{A}\!X_B\otimes_{\OB}{}_{B}\!Y_A$
is written ${}_{A}\!X_B\otimes_{B}{}_{B}\!Y_A$,
and similar notation 
${}_{B}\!Y_A\otimes_{A}{}_{A}\!X_B$ will be used.
We first note the following lemma.
\begin{lemma}\label{lem:4.1}
Keep the above assumptions and notations.
\begin{enumerate}
\renewcommand{\theenumi}{\roman{enumi}}
\renewcommand{\labelenumi}{\textup{(\theenumi)}}
\item 
$\tilde{C}(c(a),d(a)) =1$ for all $a \in E_A$, 
and hence
\begin{align}
\langle S_a \mid S_a \rangle_A 
&=\langle \eta_{c(a)} \otimes_B \xi_{d(a)} \mid \eta_{c(a)} \otimes_B \xi_{d(a)} \rangle_A
 =\langle \xi_{d(a)} \mid \xi_{d(a)} \rangle_A, \label{eq:SaA}\\ 
{}_A\!\langle S_a \mid S_a \rangle 
&={}_A\!\langle \eta_{c(a)} \otimes_B \xi_{d(a)} \mid \eta_{c(a)} \otimes_B \xi_{d(a)} \rangle
  \quad \text{ for } a \in E_A. \label{eq:ASa}
\end{align}
\item 
$\tilde{D}(d(b),c(b)) =1$ for all $b \in E_B$,
and hence
\begin{align}
\langle S_b \mid S_b \rangle_B 
&=\langle \xi_{d(b)}\otimes_A  \eta_{c(b)} \mid \xi_{d(b)}\otimes_A  \eta_{c(b)} \rangle_B
=\langle \eta_{c(b)} \mid \eta_{c(b)} \rangle_B, \label{eq:SbB}\\
{}_B\!\langle S_b \mid S_b \rangle 
&={}_B\!\langle 
\xi_{d(b)}\otimes_A  \eta_{c(b)} \mid \xi_{d(b)}\otimes_A  \eta_{c(b)} \rangle
\quad \text{ for } b \in E_B. \label{eq:BSb}
 \end{align}
\end{enumerate}
\end{lemma}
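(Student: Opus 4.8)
The plan is to derive both parts of the lemma purely by transporting structure along the inner-product-preserving isomorphisms $\phi_A$ and $\phi_B$, and then reducing the resulting tensor-product inner products to the finite-relation identities \eqref{eq:3.2etaxi} and \eqref{eq:3.2xieta}. I would write out part (i) in full; part (ii) follows verbatim after interchanging the roles of $A$ and $B$, of $\{\eta_c\}_{c\in\E_C}$ and $\{\xi_d\}_{d\in\E_D}$, and of \eqref{eq:3.2etaxi} and \eqref{eq:3.2xieta}, using $\phi_B$ in place of $\phi_A$.

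First I would apply $\phi_A$ directly. Since $\phi_A$ is a bimodule isomorphism preserving both the left and the right inner products and $\phi_A(S_a)=\eta_{c(a)}\otimes_B\xi_{d(a)}$, we obtain at once the outermost equalities
\[
\langle S_a \mid S_a \rangle_A
= \langle \eta_{c(a)} \otimes_B \xi_{d(a)} \mid \eta_{c(a)} \otimes_B \xi_{d(a)} \rangle_A,
\qquad
{}_A\!\langle S_a \mid S_a \rangle
= {}_A\!\langle \eta_{c(a)} \otimes_B \xi_{d(a)} \mid \eta_{c(a)} \otimes_B \xi_{d(a)} \rangle.
\]
This establishes the left-hand equality in \eqref{eq:SaA} and the whole of \eqref{eq:ASa}, so it remains only to identify the right-hand side of \eqref{eq:SaA} and to prove $\tilde C(c(a),d(a))=1$.

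Next I would compute the tensor-product right inner product via \eqref{eq:3.tensorA} with $x_1=x_2=\eta_{c(a)}$ and $y_1=y_2=\xi_{d(a)}$, giving
\[
\langle \eta_{c(a)} \otimes_B \xi_{d(a)} \mid \eta_{c(a)} \otimes_B \xi_{d(a)} \rangle_A
= \langle \xi_{d(a)} \mid \langle \eta_{c(a)} \mid \eta_{c(a)} \rangle_B\, \xi_{d(a)} \rangle_A.
\]
Because $\langle \eta_{c(a)} \mid \eta_{c(a)} \rangle_B$ is a self-adjoint element of $\OB$, condition (2) in the definition of an imprimitivity bimodule lets me move this $\OB$-factor across the right $\OA$-valued inner product of ${}_B\!Y_A$, so that the right side equals $\langle \langle \eta_{c(a)} \mid \eta_{c(a)} \rangle_B \xi_{d(a)} \mid \xi_{d(a)} \rangle_A$, which is exactly the left-hand side of \eqref{eq:3.2etaxi}. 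Hence it equals $\tilde C(c(a),d(a))\,\langle \xi_{d(a)} \mid \xi_{d(a)} \rangle_A$. Combining with the previous display yields $\langle S_a \mid S_a \rangle_A = \tilde C(c(a),d(a))\,\langle \xi_{d(a)} \mid \xi_{d(a)} \rangle_A$. To fix the scalar, I would note that $\langle S_a \mid S_a \rangle_A = S_a^* S_a$ is a nonzero projection in $\OA$, while $\langle \xi_{d(a)} \mid \xi_{d(a)} \rangle_A \neq 0$ since $\xi_{d(a)}\neq 0$; as $\tilde C(c(a),d(a))\in\{0,1\}$, the only possibility is $\tilde C(c(a),d(a))=1$. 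Substituting this back gives the right-hand equality of \eqref{eq:SaA}.

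I do not expect a genuine obstacle: the argument is essentially bookkeeping, transporting inner products along $\phi_A,\phi_B$. The only two points needing care are the self-adjointness/imprimitivity manipulation that recasts \eqref{eq:3.tensorA} into the precise form of \eqref{eq:3.2etaxi}, and the nonvanishing argument that forces the $\{0,1\}$-entry $\tilde C(c(a),d(a))$ (and symmetrically $\tilde D(d(b),c(b))$) to equal $1$.
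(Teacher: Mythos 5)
Your proposal is correct and follows essentially the same route as the paper: transport the inner products along the bimodule isomorphism $\phi_A$, reduce the tensor-product inner product via \eqref{eq:3.tensorA} and \eqref{eq:3.2etaxi} to $\tilde{C}(c(a),d(a))\,\langle \xi_{d(a)} \mid \xi_{d(a)} \rangle_A$, and conclude $\tilde{C}(c(a),d(a))=1$ from the nonvanishing of $S_a^*S_a$ together with the $\{0,1\}$-constraint. The only cosmetic difference is that you make explicit the self-adjointness step moving $\langle \eta_{c(a)} \mid \eta_{c(a)}\rangle_B$ across the $\OA$-valued inner product, which the paper performs silently; your aside that $\langle \xi_{d(a)} \mid \xi_{d(a)}\rangle_A \ne 0$ is unnecessary, since $S_a^*S_a \ne 0$ already forces the scalar to be $1$.
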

\begin{proof}
(i)
Since the map
$\phi_A: S_a \in  {}_\OA\!\OA_{\OA} \longrightarrow 
       \eta_{c(a)} \otimes_B \xi_{d(a)} \in {}_{A}\!X_B\otimes_{B}{}_{B}\!Y_A
$
in \eqref{eq:phiA}
gives rise to a bimodule isomorphism,
we have
\begin{equation*}
\langle S_a \mid S_a \rangle_A 
= \langle \phi_A(S_a) \mid \phi_A(S_a) \rangle_A 
= \langle \eta_{c(a)} \otimes_B \xi_{d(a)} 
   \mid \eta_{c(a)} \otimes_B \xi_{d(a)} \rangle_A.
\end{equation*}
By the definition of inner products of relative tensor products
in \eqref{eq:3.Atensor}, \eqref{eq:3.tensorA}
and the equality \eqref{eq:3.2etaxi},
we have
\begin{align*}
\langle \eta_{c(a)} \otimes_B \xi_{d(a)} \mid \eta_{c(a)} \otimes_B \xi_{d(a)} \rangle_A
=&\langle \xi_{d(a)} \mid 
 \langle \eta_{c(a)} \mid \eta_{c(a)}\rangle_B \xi_{d(a)} \rangle_A \\
=&\langle  \langle \eta_{c(a)} \mid \eta_{c(a)}\rangle_B \xi_{d(a)} \mid 
 \xi_{d(a)} \rangle_A \\
=&\tilde{C}(c(a), d(a)) \langle \xi_{d(a)} \mid  \xi_{d(a)} \rangle_A
\end{align*} 
so that 
\begin{equation*}
\langle S_a \mid S_a \rangle_A 
= \tilde{C}(c(a), d(a)) \langle \xi_{d(a)} \mid  \xi_{d(a)} \rangle_A.
\end{equation*}
Since
$\langle S_a \mid S_a \rangle_A = S_a^* S_a \ne 0$,
we have
$\tilde{C}(c(a), d(a)) \langle \xi_{d(a)} \mid  \xi_{d(a)} \rangle_A \ne 0$.
As $\tilde{C}(c(a), d(a))=0$ or $1$,
we conclude
$\tilde{C}(c(a), d(a))=1$
and hence 
\begin{equation*}
\langle S_a \mid S_a \rangle_A 
= \langle \xi_{d(a)} \mid  \xi_{d(a)} \rangle_A.
\end{equation*}
We thus have \eqref{eq:SaA}.
As
the map
$\phi_A: S_a \in  {}_\OA\!\OA_{\OA} \longrightarrow 
       \eta_{c(a)} \otimes_B \xi_{d(a)} \in {}_{A}\!X_B\otimes_{B}{}_{B}\!Y_A
$
in \eqref{eq:phiA}
gives rise to a bimodule isomorphism,
the equality
\eqref{eq:ASa}
holds.

(ii) is similarly shown to (i).
\end{proof}

\begin{lemma}\label{lem:4.2}
\hspace{6cm}
\begin{enumerate}
\renewcommand{\theenumi}{\roman{enumi}}
\renewcommand{\labelenumi}{\textup{(\theenumi)}}
\item 
$\tilde{C}(c(a),d) =0$ implies $\langle \xi_d \mid \xi_{d(a)}\rangle_A =0.$
Hence we have
\begin{equation}
\sum_{d \in \E_D}\tilde{C}(c(a),d) {}_B\!\langle \xi_d \mid \xi_d\rangle \xi_{d(a)} =\xi_{d(a)}.
\end{equation}
\item 
$\tilde{D}(d(b),c) =0$ implies $\langle \eta_c \mid \eta_{c(b)}\rangle_B =0.$
Hence we have
\begin{equation}
\sum_{c \in \E_C}\tilde{D}(d(b),c) {}_A\!\langle \eta_c \mid \eta_c\rangle \eta_{c(b)} =\eta_{c(b)}.
\end{equation}
\end{enumerate}
\end{lemma}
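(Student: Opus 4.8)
The plan is to reduce both assertions to the behaviour of the single positive element $p := \langle \eta_{c(a)} \mid \eta_{c(a)} \rangle_B \in \OB$ acting on the basis vectors of ${}_B\!Y_A$. Once one shows that $p$ annihilates each $\xi_d$ with $\tilde{C}(c(a),d) = 0$ and fixes $\xi_{d(a)}$, both parts of (i) drop out: the second displayed equation is precisely $\langle \eta_{c(a)} \mid \eta_{c(a)} \rangle_B\, \xi_{d(a)} = \xi_{d(a)}$ rewritten through \eqref{eq:3.2eta}, while for the first implication one computes, for $\tilde{C}(c(a),d)=0$, that $\langle \xi_d \mid \xi_{d(a)} \rangle_A = \langle \xi_d \mid p\,\xi_{d(a)} \rangle_A = \langle p\,\xi_d \mid \xi_{d(a)} \rangle_A = 0$, using $p\,\xi_{d(a)} = \xi_{d(a)}$, the self-adjointness of $p$, and $p\,\xi_d = 0$.

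The engine of the argument is the elementary fact that if $r \ge 0$ in $\OB$ and $\xi \in {}_B\!Y_A$ satisfies $\langle r\,\xi \mid \xi \rangle_A = 0$, then $r\,\xi = 0$; this follows by writing $\langle r\,\xi \mid \xi\rangle_A = \langle r^{1/2}\xi \mid r^{1/2}\xi\rangle_A$ and invoking definiteness of the right inner product, so $r^{1/2}\xi = 0$ and hence $r\,\xi = 0$. First I would record that $p$ is a positive contraction: since $\{\eta_c\}_{c\in\E_C}$ is a right finite basis of ${}_A\!X_B$, equation \eqref{eq:remarkbasis} (valid as $\OA$ is simple and unital, so $Z(\OA)=\C$) gives $\sum_{c\in\E_C}{}_A\!\langle \eta_c \mid \eta_c\rangle = 1$, whence $0 \le {}_A\!\langle \eta_{c(a)}\mid \eta_{c(a)}\rangle \le 1$; then \eqref{eq:norm} yields $\|p\| = \|{}_A\!\langle \eta_{c(a)}\mid\eta_{c(a)}\rangle\| \le 1$, so $0 \le p \le 1$. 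Specializing \eqref{eq:3.2etaxi} to $c = c(a)$ gives the diagonal relation $\langle p\,\xi_d \mid \xi_d\rangle_A = \tilde{C}(c(a),d)\,\langle \xi_d \mid \xi_d\rangle_A$ for every $d$.

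Applying the engine twice then finishes part (i). When $\tilde{C}(c(a),d)=0$ the diagonal relation reads $\langle p\,\xi_d \mid \xi_d\rangle_A = 0$, so with $r = p$ we get $p\,\xi_d = 0$. For $d = d(a)$, Lemma \ref{lem:4.1}(i) gives $\tilde{C}(c(a),d(a)) = 1$, so the diagonal relation becomes $\langle (1-p)\,\xi_{d(a)} \mid \xi_{d(a)}\rangle_A = 0$, and since $0 \le 1-p$ the engine with $r = 1-p$ forces $(1-p)\,\xi_{d(a)} = 0$, i.e.\ $p\,\xi_{d(a)} = \xi_{d(a)}$. Part (ii) is proved in the same way after interchanging the roles of $A$ and $B$, of $\{\eta_c\}$ and $\{\xi_d\}$, and of $\tilde{C}$ and $\tilde{D}$, using \eqref{eq:3.2xieta} and \eqref{eq:3.2xi} in place of \eqref{eq:3.2etaxi} and \eqref{eq:3.2eta}. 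I expect the only genuine subtlety to be the passage from the \emph{diagonal} data in \eqref{eq:3.2etaxi} to the \emph{exact} identity $p\,\xi_{d(a)} = \xi_{d(a)}$ (rather than merely $p\,\xi_{d(a)} \ne 0$): this is exactly where $p \le 1$ is needed, and hence where the coincidence of the two bimodule norms \eqref{eq:norm} together with the normalization \eqref{eq:remarkbasis} is indispensable.
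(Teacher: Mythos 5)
Your argument is correct, but it runs along a genuinely different track from the paper's. The paper never isolates the element $p=\langle\eta_{c(a)}\mid\eta_{c(a)}\rangle_B$: instead it expands the single positive element $\langle S_a\mid S_a\rangle_A=\langle\xi_{d(a)}\mid\xi_{d(a)}\rangle_A$ (the identity from Lemma \ref{lem:4.1}) in two ways as a sum of the positive terms $(\langle\xi_d\mid\xi_{d(a)}\rangle_A)^*\langle\xi_d\mid\xi_{d(a)}\rangle_A$ --- once weighted by $\tilde{C}(c(a),d)$ via \eqref{eq:3.2eta}, once unweighted via the reproducing property of the basis $\{\xi_d\}_{d\in\E_D}$ --- and then observes that equality of the two sums forces each omitted positive term (those with coefficient $0$) to vanish. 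That route needs only \eqref{eq:3.2eta}, the basis property, and Lemma \ref{lem:4.1}; it uses neither \eqref{eq:3.2etaxi} nor any contraction estimate. You instead study the action of $p$ on ${}_B\!Y_A$ directly through the diagonal relation \eqref{eq:3.2etaxi} and the definiteness principle ``$r\ge0$ and $\langle r\xi\mid\xi\rangle_A=0$ imply $r\xi=0$'', which costs you the extra input $p\le1$ (hence \eqref{eq:remarkbasis} and \eqref{eq:norm}) but buys the cleaner structural statements $p\,\xi_d=0$ when $\tilde{C}(c(a),d)=0$ and $p\,\xi_{d(a)}=\xi_{d(a)}$, from which both displayed identities of the lemma fall out in one line. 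Every step checks out: the adjoint move $\langle\xi_d\mid p\,\xi_{d(a)}\rangle_A=\langle p\,\xi_d\mid\xi_{d(a)}\rangle_A$ is legitimate since $p=p^*$, and $0\le p$ together with $\|p\|\le1$ does give $p\le1$ in the unital algebra $\OB$. The only quibble is your closing claim that \eqref{eq:norm} is indispensable: $p\le 1$ also follows at once from \eqref{eq:3.2eta} together with $\sum_{d\in\E_D}{}_B\!\langle\xi_d\mid\xi_d\rangle=1$ and $\tilde{C}(c(a),d)\in\{0,1\}$, with no appeal to the norm identity; and the paper's own proof shows the contraction property can be bypassed altogether.
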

\begin{proof}
(i)
By \eqref{eq:3.2eta},
we have the following equalities:
\begin{align*}
\langle S_a \mid S_a\rangle_A 
=& \langle \xi_{d(a)} \mid \langle \eta_{c(a)} \mid 
\eta_{c(a)} \rangle_B \xi_{d(a)}\rangle_A 
\\
=& \sum_{d \in \E_D}\tilde{C}(c(a),d) \langle \xi_{d(a)} \mid 
{}_B\!\langle \xi_d \mid \xi_d\rangle  
\xi_{d(a)}\rangle_A 
\\
=& \sum_{d \in \E_D}\tilde{C}(c(a),d) \langle \xi_{d(a)} \mid 
\xi_d \langle \xi_d \mid   
\xi_{d(a)}\rangle_A \rangle_A
\\
=& \sum_{d \in \E_D}\tilde{C}(c(a),d) 
(\langle \xi_d \mid \xi_{d(a)}\rangle_A)^* 
\langle \xi_d \mid  \xi_{d(a)}\rangle_A, \\
\intertext{and}
\langle \xi_{d(a)} \mid \xi_{d(a)}\rangle_A
= & \langle \xi_{d(a)} \mid 
\sum_{d \in \E_D}\xi_d \langle \xi_d \mid \xi_{d(a)}\rangle_A\rangle_A \\
= & \sum_{d \in \E_D}
(\langle \xi_d \mid \xi_{d(a)}\rangle_A)^*
\langle \xi_d \mid \xi_{d(a)}\rangle_A.
 \end{align*}
By Lemma \ref{lem:4.1},
we have
$\langle S_a \mid S_a\rangle_A = \langle \xi_{d(a)} \mid \xi_{d(a)}\rangle_A$
so that 
\begin{equation}
\sum_{d \in \E_D}\tilde{C}(c(a),d) 
(\langle \xi_d \mid \xi_{d(a)}\rangle_A)^* 
\langle \xi_d \mid  \xi_{d(a)}\rangle_A
=
\sum_{d \in \E_D}
(\langle \xi_d \mid \xi_{d(a)}\rangle_A)^*
\langle \xi_d \mid \xi_{d(a)}\rangle_A.
\end{equation}
Since
$\tilde{C}(c(a),d) =0$ or $1$,
we get
\begin{equation}
\tilde{C}(c(a),d) 
\langle \xi_d \mid  \xi_{d(a)}\rangle_A
=
\langle \xi_d \mid \xi_{d(a)}\rangle_A
\qquad\text{ for all }
d \in \E_D. \label{eq:Ccad}
\end{equation}
This implies that 
$\langle \xi_d \mid \xi_{d(a)}\rangle_A=0$
if $\tilde{C}(c(a),d)=0$.

By \eqref{eq:Ccad}, we have
\begin{align*}
\sum_{d \in \E_D}\tilde{C}(c(a),d) 
{}_B\!\langle \xi_d \mid \xi_d\rangle  \xi_{d(a)}
=&
\sum_{d \in \E_D}\tilde{C}(c(a),d) 
\xi_d \langle \xi_d \mid  \xi_{d(a)}\rangle_A \\
=&
\sum_{d \in \E_D} 
\xi_d \langle \xi_d \mid \xi_{d(a)}\rangle_A
= \xi_{d(a)}.
\end{align*}

(ii) is similarly shown to (i).
\end{proof}
The above lemma immediately implies the following lemma.
\begin{lemma}\label{lem:4.4}
\hspace{6cm}
\begin{enumerate}
\renewcommand{\theenumi}{\roman{enumi}}
\renewcommand{\labelenumi}{\textup{(\theenumi)}}
\item 
If $(c,d) \ne (c(a),d(a)) $ for any $a \in E_A$,
we have $\eta_c\otimes_B \xi_d =0$.
\item 
If 
$(d,c) \ne (d(b),c(b)) $ for any $b \in E_B$,
we have $\xi_d \otimes_A \eta_c =0$.
\end{enumerate}
\end{lemma}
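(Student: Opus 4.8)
The plan is to prove both assertions by a single counting identity, comparing the sum of the left inner products of the tensor vectors $\eta_c\otimes_B\xi_d$ taken over the \emph{whole} index set $\E_C\times\E_D$ with the contribution of the \emph{realized} pairs $(c(a),d(a))$, $a\in E_A$, and then invoking positivity to force the remaining terms to vanish. First I would establish the global identity
\begin{equation*}
\sum_{c\in\E_C}\sum_{d\in\E_D}{}_A\!\langle \eta_c\otimes_B\xi_d \mid \eta_c\otimes_B\xi_d\rangle = 1 \quad\text{in } \OA .
\end{equation*}
Using \eqref{eq:3.Atensor} for the left inner product of the relative tensor product, each summand equals ${}_A\!\langle \eta_c\,{}_B\!\langle\xi_d\mid\xi_d\rangle \mid \eta_c\rangle$. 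Summing first over $d$ and using $\sum_{d\in\E_D}{}_B\!\langle\xi_d\mid\xi_d\rangle = 1$ in $\OB$ (this is \eqref{eq:remarkbasis} for the right basis $\{\xi_d\}$ of ${}_B\!Y_A$, which is legitimate because $\OB$ is unital and simple, so $Z(\OB)=\C$) collapses the inner sum to ${}_A\!\langle\eta_c\mid\eta_c\rangle$ by left linearity; then summing over $c$ and applying \eqref{eq:remarkbasis} to the right basis $\{\eta_c\}$ of ${}_A\!X_B$ yields $1$.

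Next I would compute the contribution of the realized pairs. The assignment $a\mapsto(c(a),d(a))$ is injective: if $(c(a),d(a))=(c(a'),d(a'))$ then $\phi_A(S_a)=\phi_A(S_{a'})$, and since $\phi_A$ is a bimodule isomorphism while distinct edges give distinct generators $S_a$, we get $a=a'$. By \eqref{eq:ASa} of Lemma \ref{lem:4.1}, together with \eqref{eq:AAbimodule} and the first Cuntz--Krieger relation in \eqref{eq:OAG},
\begin{equation*}
\sum_{a\in E_A}{}_A\!\langle \eta_{c(a)}\otimes_B\xi_{d(a)}\mid \eta_{c(a)}\otimes_B\xi_{d(a)}\rangle
=\sum_{a\in E_A}{}_A\!\langle S_a\mid S_a\rangle
=\sum_{a\in E_A}S_aS_a^*=1 .
\end{equation*}
By injectivity this is exactly the partial sum of the global identity over the realized pairs, so subtracting the two identities gives that the sum of ${}_A\!\langle\eta_c\otimes_B\xi_d\mid\eta_c\otimes_B\xi_d\rangle$ over all \emph{unrealized} pairs $(c,d)$ equals $0$.

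Finally, since every ${}_A\!\langle\eta_c\otimes_B\xi_d\mid\eta_c\otimes_B\xi_d\rangle$ is a positive element of $\OA$, a vanishing sum of positive elements forces each summand to vanish; hence ${}_A\!\langle\eta_c\otimes_B\xi_d\mid\eta_c\otimes_B\xi_d\rangle=0$, that is $\eta_c\otimes_B\xi_d=0$, whenever $(c,d)\ne(c(a),d(a))$ for all $a$, which is (i). Assertion (ii) follows by the symmetric argument with the roles of $A$ and $B$ interchanged, using $\phi_B$, equation \eqref{eq:BSb}, the injectivity of $b\mapsto(d(b),c(b))$, and the analogous identity $\sum_{(d,c)}{}_B\!\langle\xi_d\otimes_A\eta_c\mid\xi_d\otimes_A\eta_c\rangle=1$ in $\OB$. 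The main points requiring care, rather than genuine obstacles, are the legitimacy of \eqref{eq:remarkbasis} for each of the two bases (which rests on the simplicity of $\OA$ and $\OB$) and the injectivity of the indexing map, so that the realized pairs are counted exactly once; once these are secured, the positivity argument closes the proof at once.
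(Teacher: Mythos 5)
Your proposal is correct and follows essentially the same route as the paper: both establish $\sum_{c\in\E_C}\sum_{d\in\E_D}{}_A\!\langle \eta_c\otimes_B\xi_d\mid\eta_c\otimes_B\xi_d\rangle=1$ by collapsing the sums via $\sum_d{}_B\!\langle\xi_d\mid\xi_d\rangle=1$ and $\sum_c{}_A\!\langle\eta_c\mid\eta_c\rangle=1$, equate this with $\sum_{a\in E_A}{}_A\!\langle S_a\mid S_a\rangle=\sum_a S_aS_a^*=1$ via Lemma \ref{lem:4.1}, and conclude by positivity. Your explicit verification that $a\mapsto(c(a),d(a))$ is injective is a careful touch the paper leaves implicit, but it does not change the argument.
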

\begin{proof}
(i)
By Lemma \ref{lem:4.1},
\begin{equation*}
{}_A\!\langle S_a \mid S_a \rangle = 
{}_A\!\langle \eta_{c(a)} \otimes_B \xi_{d(a)} \mid \eta_{c(a)} \otimes_B \xi_{d(a)}\rangle,
\qquad a \in E_A
\end{equation*}
so that 
\begin{equation*}
1 = \sum_{a \in E_A} S_a S_a^* 
  = \sum_{a \in E_A} {}_A\!\langle \eta_{c(a)} \otimes_B \xi_{d(a)} 
\mid \eta_{c(a)} \otimes_B \xi_{d(a)}\rangle.
\end{equation*}
On the other hand, by the equality
$ \sum_{d \in \E_D} {}_\B\!\langle \xi_d \mid \xi_d \rangle = 1$ in $\OB$,
we have
\begin{align*}
 \sum_{c \in \E_C}\sum_{d \in \E_D}
{}_A\!\langle \eta_{c} \otimes_B \xi_{d} \mid \eta_{c} \otimes_B \xi_{d}\rangle 
= &\sum_{c \in \E_C}\sum_{d \in \E_D}
{}_A\!\langle \eta_{c}  {}_B\langle \xi_{d} \mid \xi_d\rangle \mid
 \eta_{c} \rangle \\
=  &\sum_{c \in \E_C}
{}_A\!\langle \eta_{c} \sum_{d \in \E_D}{}_B\langle \xi_{d} \mid \xi_d\rangle \mid
 \eta_{c} \rangle \\
= &  \sum_{c \in \E_C}
{}_A\!\langle \eta_{c}  \mid
 \eta_{c} \rangle 
= 1
\end{align*}
so that 
\begin{equation*}
1  = \sum_{a \in E_A}
 {}_A\!\langle \eta_{c(a)} \otimes_B \xi_{d(a)} 
\mid \eta_{c(a)} \otimes_B \xi_{d(a)}\rangle
= \sum_{c \in \E_C}\sum_{d \in \E_D}
{}_A\!\langle \eta_{c} \otimes_B \xi_{d} 
\mid \eta_{c} \otimes_B \xi_{d}\rangle. 
\end{equation*}
As ${}_A\!\langle \eta_{c} \otimes_B \xi_{d} 
\mid \eta_{c} \otimes_B \xi_{d}\rangle \ge 0$,
we have
${}_A\!\langle \eta_{c} \otimes_B \xi_{d} \mid \eta_{c} \otimes_B \xi_{d}\rangle =0$
for $c \in \E_C, d \in \E_D$
with
 $(c,d)  \ne (c(a),d(a)) $ for any $a \in E_A$.
This means that
$\eta_c\otimes_B \xi_d =0$
if $(c,d)  \ne (c(a),d(a)) $ for any $a \in E_A$.

(ii) is similarly shown to (i).
\end{proof}

\begin{lemma}\label{lem:4.5}
\hspace{6cm}
\begin{enumerate}
\renewcommand{\theenumi}{\roman{enumi}}
\renewcommand{\labelenumi}{\textup{(\theenumi)}}
\item 
$A^G(a_i,a_j) = \tilde{D}(d(a_i), c(a_j))$ for all $a_i, a_j \in E_A$.
\item 
$B^G(b_k, b_l) = \tilde{C}(c(b_k), d(b_l))$ for all $b_k, b_l \in E_B$.
\end{enumerate}
\end{lemma}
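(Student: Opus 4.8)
The plan is to prove (i) directly and obtain (ii) by the mirror argument with the roles of $A,B$, of ${}_A\!X_B,{}_B\!Y_A$, of the maps $c,d$, and of $\tilde C,\tilde D$ interchanged. The single idea is to evaluate one $\OA$-valued inner product in two ways: the Cuntz--Krieger relation \eqref{eq:OAG} produces the scalar $A^G(a_i,a_j)$, whereas the relative-tensor inner product formula \eqref{eq:3.tensorA} together with the finitely-related relations produces $\tilde D(d(a_i),c(a_j))$; equating them and cancelling a common nonzero factor gives (i).

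Concretely, fix $a_i,a_j\in E_A$ and put $w := S_{a_i}\cdot\eta_{c(a_j)}\in{}_A\!X_B$. Since $\phi_A$ is a left $\OA$-module isomorphism, $\phi_A(S_{a_i}S_{a_j}) = S_{a_i}\cdot\phi_A(S_{a_j}) = w\otimes_B\xi_{d(a_j)}$, and I would compute $\langle w\otimes_B\xi_{d(a_j)}\mid w\otimes_B\xi_{d(a_j)}\rangle_A$ twice. On one side $\phi_A$ preserves the right inner product, so this equals $\langle S_{a_i}S_{a_j}\mid S_{a_i}S_{a_j}\rangle_A = S_{a_j}^*S_{a_i}^*S_{a_i}S_{a_j}$; the relation \eqref{eq:OAG} gives $S_{a_i}^*S_{a_i}S_{a_j}=A^G(a_i,a_j)S_{a_j}$, so the value is $A^G(a_i,a_j)\langle\xi_{d(a_j)}\mid\xi_{d(a_j)}\rangle_A$, where Lemma \ref{lem:4.1}(i) is used to replace $\langle S_{a_j}\mid S_{a_j}\rangle_A$ by $\langle\xi_{d(a_j)}\mid\xi_{d(a_j)}\rangle_A$. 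On the other side \eqref{eq:3.tensorA} rewrites the same quantity as $\langle\xi_{d(a_j)}\mid\langle w\mid w\rangle_B\,\xi_{d(a_j)}\rangle_A$.

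The remaining work is to simplify $\langle w\mid w\rangle_B\,\xi_{d(a_j)}$. Moving the left factor across the right inner product gives $\langle w\mid w\rangle_B=\langle\eta_{c(a_j)}\mid(S_{a_i}^*S_{a_i})\eta_{c(a_j)}\rangle_B$, and since $S_{a_i}^*S_{a_i}=\langle\xi_{d(a_i)}\mid\xi_{d(a_i)}\rangle_A$ by Lemma \ref{lem:4.1}(i), relation \eqref{eq:3.2xieta} collapses this to $\tilde D(d(a_i),c(a_j))\langle\eta_{c(a_j)}\mid\eta_{c(a_j)}\rangle_B$. Then \eqref{eq:3.2eta} together with Lemma \ref{lem:4.2}(i) yields $\langle\eta_{c(a_j)}\mid\eta_{c(a_j)}\rangle_B\,\xi_{d(a_j)}=\xi_{d(a_j)}$, so the second evaluation is $\tilde D(d(a_i),c(a_j))\langle\xi_{d(a_j)}\mid\xi_{d(a_j)}\rangle_A$. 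Comparing the two evaluations and cancelling $\langle\xi_{d(a_j)}\mid\xi_{d(a_j)}\rangle_A=S_{a_j}^*S_{a_j}\neq 0$ gives $A^G(a_i,a_j)=\tilde D(d(a_i),c(a_j))$.

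I expect the main obstacle to be exactly the collapse $\langle\eta_{c(a_j)}\mid\eta_{c(a_j)}\rangle_B\,\xi_{d(a_j)}=\xi_{d(a_j)}$: a priori the basis vectors $\eta_c$ need not be mutually orthogonal, so $\langle w\mid w\rangle_B$ cannot be simplified naively, and it is precisely the finitely-related structure (through Lemma \ref{lem:4.2}) that forces the unwanted cross terms to vanish. Part (ii) is then the symmetric statement: set $v:=S_{b_k}\cdot\xi_{d(b_l)}$, use $\phi_B(S_{b_k}S_{b_l})=v\otimes_A\eta_{c(b_l)}$, and run the same two-sided computation with \eqref{eq:3.2etaxi}, \eqref{eq:3.2xi} and Lemma \ref{lem:4.2}(ii) replacing their counterparts, arriving at $B^G(b_k,b_l)=\tilde C(c(b_k),d(b_l))$.
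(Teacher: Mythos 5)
Your argument is correct, and it reaches the identity by a genuinely different (though closely related) computation than the paper's. The paper proves (i) by expanding the single element $\langle S_{a_i}\mid S_{a_i}\rangle_A=S_{a_i}^*S_{a_i}$ in two ways: once via the Cuntz--Krieger relation \eqref{eq:OAG} as $\sum_{j}A^G(a_i,a_j)S_{a_j}S_{a_j}^*$, and once via \eqref{eq:3.2xi}, insertion of $\sum_{d\in\E_D}{}_B\!\langle\xi_d\mid\xi_d\rangle=1$, Lemma \ref{lem:4.4} (to kill the terms with $(c,d)\ne(c(a_j),d(a_j))$) and Lemma \ref{lem:4.1}, as $\sum_{j}\tilde D(d(a_i),c(a_j))S_{a_j}S_{a_j}^*$; it then reads off the coefficients of the mutually orthogonal nonzero projections $S_{a_j}S_{a_j}^*$. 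You instead fix the pair $(a_i,a_j)$, transport $S_{a_i}S_{a_j}$ through $\phi_A$ using that it is a left $\OA$-module map, and evaluate one right inner product two ways, invoking Lemma \ref{lem:4.2} and \eqref{eq:3.2xieta} where the paper invokes Lemma \ref{lem:4.4}. Your pointwise version extracts each matrix entry separately, so it sidesteps the reindexing step in the paper's sum (which implicitly uses that $a_j\mapsto(c(a_j),d(a_j))$ is injective, a consequence of the injectivity of $\phi_A$); the paper's version obtains the whole $a_i$-th row at once and never leaves $\OA$, whereas yours requires the auxiliary vector $w=S_{a_i}\cdot\eta_{c(a_j)}$ and one extra application of the imprimitivity compatibility $\langle a\cdot x\mid y\rangle_B=\langle x\mid a^*\cdot y\rangle_B$. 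Both arguments rest on the same two pillars, namely Lemma \ref{lem:4.1} and the finitely-related identities \eqref{eq:3.2eta}--\eqref{eq:3.2xieta}, and your final cancellation of $\langle\xi_{d(a_j)}\mid\xi_{d(a_j)}\rangle_A=S_{a_j}^*S_{a_j}\ne 0$ is the same closing step the paper uses with the projections $S_{a_j}S_{a_j}^*$.
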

\begin{proof}
(i)
The second operator relations of \eqref{eq:OAG}
is rewritten in ${}_\OA\!\OA_{\OA}$ such as:
\begin{equation}
\langle S_{a_i} \mid S_{a_i} \rangle_A
 = \sum_{j=1}^{N_A} A^G(a_i,a_j) 
{}_A\!\langle S_{a_j} \mid S_{a_j}\rangle, \qquad i=1,\dots,N_A. \label{eq:rewritten}
\end{equation}
By \eqref{eq:3.2xi}, we have
\begin{align*}
\langle S_{a_i} \mid S_{a_i} \rangle_A
=& \langle  \xi_{d(a_i)} 
   \mid 
   \xi_{d(a_i)}\rangle_A \\
= &\sum_{c \in \E_C}\tilde{D}(d(a_i),c)
  {}_A\!\langle \eta_{c} \mid \eta_{c}\rangle \\
= &\sum_{c \in \E_C}\tilde{D}(d(a_i),c)\sum_{d \in \E_D}
  {}_A\!\langle \eta_{c} {}_B\langle\xi_d\mid\xi_d\rangle
 \mid \eta_{c}\rangle \\
= &\sum_{c \in \E_C}\sum_{d \in \E_D}\tilde{D}(d(a_i),c)
{}_A\!\langle \eta_{c} \otimes_B \xi_{d} \mid \eta_{c} \otimes_B \xi_{d}\rangle. 
\end{align*}
By Lemma \ref{lem:4.4}, we know
$
{}_A\!\langle \eta_{c} \otimes_B \xi_{d} \mid \eta_{c} \otimes_B \xi_{d}\rangle
=0
$
if 
$(c,d) \ne (c(a_j), d(a_j))$
for any $a_j \in E_A$,
so that we have by Lemma \ref{lem:4.1}
\begin{align}
\langle S_{a_i} \mid S_{a_i} \rangle_A
= & \sum_{a_j  \in E_A}\tilde{D}(d(a_i),c(a_j))
{}_A\!\langle \eta_{c(a_j)} \otimes_B \xi_{d(a_j)} 
\mid \eta_{c(a_j)} \otimes_B \xi_{d(a_j)}\rangle \\
= & \sum_{a_j  \in E_A}\tilde{D}(d(a_i),c(a_j))
{}_A\!\langle S_{a_j} \mid S_{a_j}\rangle.
\label{eq;sai} 
\end{align}
By \eqref{eq:rewritten}, \eqref{eq;sai},
we have
\begin{equation}
\sum_{a_j  \in E_A}\tilde{D}(d(a_i),c(a_j))
S_{a_j}S_{a_j}^*
=
\sum_{a_j  \in E_A} A^G(a_i,a_j) 
S_{a_j}S_{a_j}^*
\end{equation}
so that 
$
\tilde{D}(d(a_i),c(a_j)) = A^G(a_i,a_j)
$
for all $a_i, a_j \in E_A$.
\end{proof}
\begin{lemma}\label{lem:4.6}
\hspace{5cm} 
\begin{enumerate}
\renewcommand{\theenumi}{\roman{enumi}}
\renewcommand{\labelenumi}{\textup{(\theenumi)}}
\item 
For $a_i, a_j \in E_A$, we have $\tilde{D}(d(a_i), c(a_j)) =1$ 
if and only if 
there exists $b_k \in E_B$ such that $d(a_i) = d(b_k)$ and $c(a_j) = c(b_k)$.
In this case such $b_k$ is unique.
\item 
For  $b_k, b_l \in E_B$, we have
$\tilde{C}(c(b_k), d(b_l))=1$
if and only if  
there exists $a_j \in E_A$ such that $c(b_k) = c(a_j)$ and  $d(b_l) = d(a_j)$.
In this case such $a_j$ is unique.
\end{enumerate}
\end{lemma}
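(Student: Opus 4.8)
The plan is to convert the arithmetic condition on $\tilde{C}$ and $\tilde{D}$ into a non-vanishing statement about the relative tensor products $\xi_d\otimes_A\eta_c$ and $\eta_c\otimes_B\xi_d$, at which point Lemma~\ref{lem:4.4} produces the required edge and the isomorphisms $\phi_A,\phi_B$ of \eqref{eq:phiA}, \eqref{eq:phiB} yield its uniqueness. I will carry out (i) in full; part (ii) is obtained by the symmetric argument with $A$ and $B$, $\eta$ and $\xi$, and $\tilde{C}$ and $\tilde{D}$ interchanged, so I only indicate the substitutions at the end.

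For the forward implication of (i), I would begin with \eqref{eq:3.2xieta}, whose left-hand side---by the remark following it, together with \eqref{eq:3.tensorA}---is the right inner product of $\xi_{d(a_i)}\otimes_A\eta_{c(a_j)}$; thus
\begin{equation*}
\langle \xi_{d(a_i)}\otimes_A\eta_{c(a_j)} \mid \xi_{d(a_i)}\otimes_A\eta_{c(a_j)}\rangle_B
= \tilde{D}(d(a_i),c(a_j))\,\langle \eta_{c(a_j)} \mid \eta_{c(a_j)}\rangle_B .
\end{equation*}
Here $\eta_{c(a_j)}\neq 0$, since otherwise $\phi_A(S_{a_j})=\eta_{c(a_j)}\otimes_B\xi_{d(a_j)}$ would vanish, contradicting the injectivity of $\phi_A$ and $S_{a_j}\neq 0$; hence $\langle \eta_{c(a_j)}\mid\eta_{c(a_j)}\rangle_B\neq 0$. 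The displayed identity then shows that $\tilde{D}(d(a_i),c(a_j))=1$ if and only if $\xi_{d(a_i)}\otimes_A\eta_{c(a_j)}\neq 0$. Assuming $\tilde{D}(d(a_i),c(a_j))=1$, Lemma~\ref{lem:4.4}(ii) forces $(d(a_i),c(a_j))=(d(b_k),c(b_k))$ for some $b_k\in E_B$, which is precisely the asserted existence.

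The converse implication is immediate: if $b_k\in E_B$ satisfies $d(b_k)=d(a_i)$ and $c(b_k)=c(a_j)$, then $\tilde{D}(d(a_i),c(a_j))=\tilde{D}(d(b_k),c(b_k))=1$ by Lemma~\ref{lem:4.1}(ii). For uniqueness I would note that $b\in E_B\mapsto (d(b),c(b))$ is injective: if two edges $b_k,b_{k'}$ have the same image, then $\phi_B(S_{b_k})=\xi_{d(b_k)}\otimes_A\eta_{c(b_k)}=\phi_B(S_{b_{k'}})$, so injectivity of $\phi_B$ gives $S_{b_k}=S_{b_{k'}}$, and hence $b_k=b_{k'}$ because the range projections $S_bS_b^*$ are mutually orthogonal in $\OB$.

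The one step that is not a mechanical reading of \eqref{eq:3.2xieta} and Lemmas~\ref{lem:4.1}, \ref{lem:4.4} is the uniqueness claim, which is where I must use that $\phi_A$ and $\phi_B$ are genuine (hence injective) bimodule isomorphisms rather than merely basis-respecting maps; I expect this to be the only delicate point. Part (ii) runs identically, replacing \eqref{eq:3.2xieta} by \eqref{eq:3.2etaxi} so that $\langle \eta_{c(b_k)}\otimes_B\xi_{d(b_l)}\mid\eta_{c(b_k)}\otimes_B\xi_{d(b_l)}\rangle_A=\tilde{C}(c(b_k),d(b_l))\langle \xi_{d(b_l)}\mid\xi_{d(b_l)}\rangle_A$, using $\xi_{d(b_l)}\neq 0$, Lemma~\ref{lem:4.4}(i) for existence, Lemma~\ref{lem:4.1}(i) for the converse, and the injectivity of $\phi_A$ for uniqueness.
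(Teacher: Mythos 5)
Your proposal is correct and follows essentially the same route as the paper: the equality $\langle \xi_{d(a_i)}\otimes_A\eta_{c(a_j)}\mid\xi_{d(a_i)}\otimes_A\eta_{c(a_j)}\rangle_B=\tilde{D}(d(a_i),c(a_j))\langle\eta_{c(a_j)}\mid\eta_{c(a_j)}\rangle_B$ from \eqref{eq:3.2xieta} and \eqref{eq:3.tensorA}, Lemma \ref{lem:4.4} for existence, Lemma \ref{lem:4.1} for the converse, and the injectivity of $\phi_B$ (resp.\ $\phi_A$) for uniqueness. Your explicit justification that $\eta_{c(a_j)}\neq 0$ is a small detail the paper leaves implicit, but the argument is otherwise identical.
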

\begin{proof}
(i) 
Suppose that there exists $b_k \in E_B$ such that $d(a_i) = d(b_k)$ and $c(a_j) = c(b_k)$.
By Lemma \ref{lem:4.1} (ii), we have
$\tilde{D}(d(a_i), c(a_j)) =1$.

Conversely,  assume that $\tilde{D}(d(a_i), c(a_j)) =1$.
By  \eqref{eq:3.2xieta} and \eqref{eq:3.tensorA},
we have
\begin{align*}
\langle \xi_{d(a_i)}\otimes_A  \eta_{c(a_j)} 
\mid \xi_{d(a_i)}\otimes_A  \eta_{c(a_j)} \rangle_B 
= & \tilde{D}(d(a_i),c(a_j)) \langle \eta_{c(a_j)} \mid \eta_{c(a_j)} \rangle_B \\
= &  \langle \eta_{c(a_j)} \mid \eta_{c(a_j)} \rangle_B 
\ne 0.
\end{align*}
Hence we have
$\xi_{d(a_i)}\otimes_A  \eta_{c(a_j)}\ne 0$.
By Lemma \ref{lem:4.4} (ii)
there exists 
$b_k \in E_B$ such that $d(a_i) = d(b_k)$ and $c(a_j) = c(b_k)$.
Since 
$\phi_B: S_b \in {}_\OB\!\OB_\OB \longrightarrow \xi_{d(b)}\otimes_A \eta_{c(b)}
\in {}_B\!Y_A\otimes_A {}_A\!X_B$
yields a bijective isomorphism, such $b_k$ is unique.

(ii) is similar to (i).
\end{proof}
It is well-known that the relative tensor products of  Hilbert $C^*$-bimodules 
are associative in the sense that 
$({}_A\!X_B\otimes_B{}_B\!Y_A) \otimes_A{}_A\!X_B$
is naturally isomorphic to
${}_A\!X_B\otimes_B({}_B\!Y_A \otimes_A{}_A\!X_B)$
so that one may identify the vectors
$(\eta_c\otimes_B \xi_d)\otimes_A\eta_{c'}$
with
$\eta_c\otimes_B(\xi_d\otimes_A\eta_{c'})$,
that will be written 
$\eta_c\otimes_B \xi_d\otimes_A\eta_{c'}.$
Similar notation
$\xi_d\otimes_A\eta_c\otimes_B \xi_{d'}$
will be used. 
\begin{lemma}
\label{lem:4.7}
Let
 $\eta_c, \eta_{c'} \in {}_A\!X_B$ for $c, c' \in \E_C$, and
 $\xi_d, \xi_{d'} \in {}_B\!Y_A$ for $d, d' \in \E_D$. 
\begin{enumerate}
\renewcommand{\theenumi}{\roman{enumi}}
\renewcommand{\labelenumi}{\textup{(\theenumi)}}
\item 
$\eta_c\otimes_B \xi_d\otimes_A\eta_{c'} \ne 0$ 
if and only if $\tilde{C}(c,d) = \tilde{D}(d,c') = 1$.
\item 
$\xi_d\otimes_A\eta_c\otimes_B \xi_{d'} \ne 0$ 
if and only if $\tilde{D}(d,c)=\tilde{C}(c,d')  = 1$.
\end{enumerate}
\end{lemma}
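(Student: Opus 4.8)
The plan is to reduce the whole statement to a single self-inner-product computation, using that in an imprimitivity bimodule a vector $v$ vanishes precisely when $\langle v \mid v\rangle = 0$ (the left and right norms agreeing by \eqref{eq:norm}). For (i) I would evaluate the $\OB$-valued right inner product of $v = \eta_c\otimes_B\xi_d\otimes_A\eta_{c'}$ with itself and show that it equals $\tilde{C}(c,d)\,\tilde{D}(d,c')\,\langle\eta_{c'}\mid\eta_{c'}\rangle_B$. Since $\langle\eta_{c'}\mid\eta_{c'}\rangle_B\neq 0$ and the entries of $\tilde{C},\tilde{D}$ lie in $\{0,1\}$, this is nonzero exactly when $\tilde{C}(c,d)=\tilde{D}(d,c')=1$. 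Part (ii) then follows verbatim by interchanging the roles of $\OA$ and $\OB$ (equivalently of $\eta$ and $\xi$, and of $\tilde{C}$ and $\tilde{D}$).

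First I would record two ``two-fold'' identities, both already carried out inside the proof of Lemma \ref{lem:4.1}. Applying \eqref{eq:3.tensorA} and then \eqref{eq:3.2etaxi} (flipping the inner product as in Lemma \ref{lem:4.1}, the self-adjointness of $\langle\xi_d\mid\xi_d\rangle_A$ and the reality of the scalar $\tilde{C}(c,d)$ making the adjoint harmless) gives
\[
\langle \eta_c\otimes_B \xi_d \mid \eta_c\otimes_B\xi_d\rangle_A = \tilde{C}(c,d)\,\langle \xi_d\mid\xi_d\rangle_A, \qquad c\in\E_C,\ d\in\E_D.
\]
Symmetrically, \eqref{eq:3.tensorA} together with \eqref{eq:3.2xieta} yields
\[
\langle \xi_d\otimes_A\eta_{c'}\mid\xi_d\otimes_A\eta_{c'}\rangle_B = \tilde{D}(d,c')\,\langle \eta_{c'}\mid\eta_{c'}\rangle_B, \qquad c'\in\E_C,\ d\in\E_D.
\]

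The one real computation then chains these together via the associativity of the relative tensor product noted just before the lemma. Writing $v=(\eta_c\otimes_B\xi_d)\otimes_A\eta_{c'}$ and applying the right-inner-product formula \eqref{eq:3.tensorA} to the outer $\otimes_A$ (whose left factor $\eta_c\otimes_B\xi_d$ lives in the $\OA$--$\OA$-bimodule ${}_A\!X_B\otimes_B{}_B\!Y_A$), I get
\[
\langle v \mid v\rangle_B = \langle \eta_{c'} \mid \langle \eta_c\otimes_B\xi_d \mid \eta_c\otimes_B\xi_d\rangle_A\,\eta_{c'}\rangle_B .
\]
Substituting the first two-fold identity, pulling the scalar $\tilde{C}(c,d)$ outside the inner product, and recognizing the remaining expression as $\langle \xi_d\otimes_A\eta_{c'}\mid\xi_d\otimes_A\eta_{c'}\rangle_B$ through \eqref{eq:3.tensorA} once more, the second two-fold identity gives
\[
\langle v\mid v\rangle_B = \tilde{C}(c,d)\,\tilde{D}(d,c')\,\langle\eta_{c'}\mid\eta_{c'}\rangle_B ,
\]
which is the claimed formula and proves (i).

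I do not anticipate a genuine obstacle; the content is bookkeeping. The two points deserving care are (a) applying the tensor inner-product formula \eqref{eq:3.tensorA} consistently when one factor is itself a relative tensor product, which the associativity identity legitimizes, and (b) the conjugate-symmetry flips needed to match the forms appearing in \eqref{eq:3.2etaxi}, \eqref{eq:3.2xieta} against those produced by \eqref{eq:3.tensorA}. One should also note at the outset that the basis vectors are nonzero, so that $\langle\eta_{c'}\mid\eta_{c'}\rangle_B\neq 0$ and $\langle\xi_d\mid\xi_d\rangle_A\neq 0$; this is what makes the vanishing of the triple product detected exactly by the $\{0,1\}$-valued product $\tilde{C}(c,d)\tilde{D}(d,c')$.
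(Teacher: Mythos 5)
Your proposal is correct and follows essentially the same route as the paper: both compute $\langle v\mid v\rangle_B$ for $v=(\eta_c\otimes_B\xi_d)\otimes_A\eta_{c'}$ via \eqref{eq:3.tensorA}, then apply \eqref{eq:3.2etaxi} and \eqref{eq:3.2xieta} to obtain $\tilde{C}(c,d)\,\tilde{D}(d,c')\,\langle\eta_{c'}\mid\eta_{c'}\rangle_B$, with (ii) by symmetry. Your explicit remark that the basis vectors are nonzero is a point the paper leaves implicit, but otherwise the arguments coincide.
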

\begin{proof}
(i) 
We have the following equalities:
\begin{align*}
\langle (\eta_c\otimes_B \xi_d) \otimes_A\eta_{c'}
\mid    (\eta_c\otimes_B \xi_d) \otimes_A\eta_{c'}
\rangle_B
=&  \langle \eta_{c'} \mid 
     \langle \langle \eta_c \mid \eta_c \rangle_B \xi_d \mid
               \xi_d \rangle_A 
     \eta_{c'} \rangle_B \\
=&  \tilde{C}(c,d) 
      \langle \eta_{c'} \mid 
     \langle  \xi_d \mid
               \xi_d \rangle_A 
     \eta_{c'} \rangle_B \\
=& \tilde{C}(c,d) \tilde{D}(d,c') \langle \eta_{c'} \mid \eta_{c'}
      \rangle_B.
\end{align*}
Hence we have 
$\eta_c\otimes_B \xi_d\otimes_A\eta_{c'} \ne 0$ 
if and only if $\tilde{C}(c,d) = \tilde{D}(d,c') = 1$.

(ii) By a similar way to the above computation, we have 
\begin{equation*}
\langle (\xi_d\otimes_A\eta_c)\otimes_B \xi_{d'} 
   \mid (\xi_d\otimes_A\eta_c)\otimes_B \xi_{d'} \rangle_A
=
\tilde{D}(d,c) \tilde{C}(c,d') \langle \xi_{d'} \mid \xi_{d'}
      \rangle_A.
\end{equation*}
Hence we have 
$\xi_d\otimes_A\eta_c\otimes_B \xi_{d'} \ne 0$ 
if and only if $\tilde{D}(d,c)=\tilde{C}(c,d')  = 1$.
\end{proof}
By using the above lemma, we provide the following two lemmas.
\begin{lemma}
\label{lem:4.8}
\hspace{6cm}
\begin{enumerate}
\renewcommand{\theenumi}{\roman{enumi}}
\renewcommand{\labelenumi}{\textup{(\theenumi)}}
\item 
For $c \in \E_C$, suppose that 
\begin{align}
\eta_c\otimes_B (\xi_{d(b_k)}\otimes_A\eta_{c(b_k)}) \ne 0
& \text{ for some } b_k \in E_B,  \label{eq:egek}\\
\eta_c\otimes_B (\xi_{d(b_l)}\otimes_A\eta_{c(b_l)}) \ne 0
& \text{ for some } b_l \in E_B. \label{eq:egel}
\end{align}
Then we have $s(b_k) = s(b_l)$ in $V_B$.
\item 
For $d \in \E_D$, suppose that 
\begin{align}
\xi_d\otimes_A( \eta_{c(a_i)}\otimes_B \xi_{d(a_i)})  \ne 0
& \text{ for some } a_i \in E_A, \label{eq:gegi}\\
\xi_d\otimes_A( \eta_{c(a_j)}\otimes_B \xi_{d(a_j)})  \ne 0
& \text{ for some } a_j \in E_A. \label{eq:gegj}
\end{align}
Then we have $s(a_i) = s(a_j)$ in $V_A$.
\end{enumerate}
\end{lemma}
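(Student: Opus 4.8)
The plan is to produce, for the common index $c$, a single \emph{witness} edge $b_* \in E_B$ with $c(b_*) = c$, and then read off $s(b_k) = t(b_*) = s(b_l)$ from the transition matrix $B^G$ via Lemma \ref{lem:4.5}(ii). First I would unravel the hypotheses: by associativity of the relative tensor product and Lemma \ref{lem:4.7}(i), the nonvanishing in \eqref{eq:egek} and \eqref{eq:egel} forces $\tilde{C}(c, d(b_k)) = 1$ and $\tilde{C}(c, d(b_l)) = 1$, the companion factors $\tilde{D}(d(b_k), c(b_k))$ and $\tilde{D}(d(b_l), c(b_l))$ being automatically $1$ by Lemma \ref{lem:4.1}(ii). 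In particular $\eta_c \otimes_B \xi_{d(b_k)} \ne 0$, so Lemma \ref{lem:4.4}(i) yields an edge $a_p \in E_A$ with $c(a_p) = c$ (and $d(a_p) = d(b_k)$). Thus $c$ lies in the image of $c$ restricted to $E_A$, which is exactly the foothold needed to cross over to the $B$-side.

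The central step is the construction of $b_*$. Since $A$ is irreducible, the vertex $s(a_p) \in V_A$ receives some edge $a_i \in E_A$, i.e. $A^G(a_i, a_p) = 1$. By Lemma \ref{lem:4.5}(i) this reads $\tilde{D}(d(a_i), c(a_p)) = 1$, that is $\tilde{D}(d(a_i), c) = 1$. Feeding the pair $(a_i, a_p)$ into Lemma \ref{lem:4.6}(i) then produces a (unique) $b_* \in E_B$ with $d(b_*) = d(a_i)$ and, crucially, $c(b_*) = c(a_p) = c$.

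It then remains to compare the two sources. By Lemma \ref{lem:4.5}(ii) we have $B^G(b_*, b_k) = \tilde{C}(c(b_*), d(b_k)) = \tilde{C}(c, d(b_k)) = 1$ and likewise $B^G(b_*, b_l) = \tilde{C}(c, d(b_l)) = 1$. Since $B^G(b_*, b) = 1$ holds precisely when $t(b_*) = s(b)$, both identities give $t(b_*) = s(b_k)$ and $t(b_*) = s(b_l)$, whence $s(b_k) = s(b_l)$, as claimed. Part (ii) is entirely symmetric under interchanging the roles of $A$ and $B$, of $\tilde{C}$ and $\tilde{D}$, and of $\{\eta_c\}_{c \in \E_C}$ and $\{\xi_d\}_{d \in \E_D}$, now invoking irreducibility of $B$ in place of that of $A$.

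I expect the only genuine obstacle to be this middle step: manufacturing an honest edge $b_* \in E_B$ that carries the given abstract index $c$. Nothing in the hypotheses refers to the $B$-side directly, so one really must route through the $A$-side (using that $c = c(a_p)$ is realized by some $A$-edge) and then exploit the irreducibility of $A$ together with the bijective correspondence between $A$- and $B$-decompositions encoded in Lemma \ref{lem:4.6}(i) to land back in $E_B$. Once $b_*$ is in hand, the conclusion is pure bookkeeping with the identity $B^G(b_*, \cdot) = \tilde{C}(c, \cdot)$ furnished by Lemma \ref{lem:4.5}(ii).
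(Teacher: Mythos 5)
Your proof is correct, and it follows the same overall skeleton as the paper's: extract $\tilde{C}(c,d(b_k))=\tilde{C}(c,d(b_l))=1$ from Lemma \ref{lem:4.7}, produce a single edge $b_*\in E_B$ with $c(b_*)=c$, and then conclude $s(b_k)=t(b_*)=s(b_l)$ from $B^G(b_*,\cdot)=\tilde{C}(c(b_*),\cdot)$ via Lemma \ref{lem:4.5}(ii). The genuine difference is in how the witness edge is obtained. The paper simply asserts that the map $c\colon E_B\to\E_C$ is surjective because $\phi_B\colon S_b\mapsto \xi_{d(b)}\otimes_A\eta_{c(b)}$ is a bimodule isomorphism, and picks $b_h$ with $c(b_h)=c$; this is a one-line appeal to a global property of the index map. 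You instead construct $b_*$ explicitly: from $\eta_c\otimes_B\xi_{d(b_k)}\ne 0$ and Lemma \ref{lem:4.4}(i) you realize $c=c(a_p)$ by an actual $A$-edge $a_p$, use irreducibility of $A$ to find $a_i$ with $A^G(a_i,a_p)=1$, translate this to $\tilde{D}(d(a_i),c)=1$ by Lemma \ref{lem:4.5}(i), and then invoke Lemma \ref{lem:4.6}(i) to land on $b_*\in E_B$ with $c(b_*)=c$. Your route is longer but arguably more careful: it only needs a $B$-edge carrying the \emph{particular} index $c$ appearing in the hypotheses (which is guaranteed to be ``active'' since $\eta_c\ne 0$), rather than surjectivity of $c|_{E_B}$ onto all of $\E_C$, and every step is reduced to the previously established lemmas. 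The cost is an extra appeal to irreducibility of $A$ (respectively of $B$ in part (ii)), which the paper's version does not explicitly use at this point.
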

\begin{proof}
(i)
Assume that the equalities \eqref{eq:egek} and \eqref{eq:egel} hold.
By Lemma \ref{lem:4.7},
we have 
$\tilde{C}(c,d(b_k)) = \tilde{C}(c,d(b_l)) =1.$
Since
$\varphi_B: S_b \in \OB \longrightarrow  \xi_{d(b)} \otimes \eta_{c(b)}
\in {}_B\!Y_A\otimes_A {}_A\!X_B$
yields a bimodule isomorphism, we know that 
$c : E_B \longrightarrow \E_C$ is surjective, 
so that 
there exists $b_h \in E_B$ such that 
$c(b_h) = c$.
By Lemma \ref{lem:4.5},
we have
\begin{equation*}
B^G(b_h,b_k) = \tilde{C}(c(b_h),d(b_k)) =\tilde{C}(c,d(b_k))=1
\end{equation*}
so that
$t(b_h) =s(b_k)$,
and similarly   
$t(b_h) =s(b_l)$.
Hence  we obtain  
$s(b_k)=s(b_l)$.

(ii) is similarly shown to (i).
\end{proof}

\begin{lemma}
\label{lem:4.9}
\hspace{6cm}
\begin{enumerate}
\renewcommand{\theenumi}{\roman{enumi}}
\renewcommand{\labelenumi}{\textup{(\theenumi)}}
\item 
For $c \in \E_C$, suppose that 
\begin{align}
( \eta_{c(a_i)}\otimes_B \xi_{d(a_i)}) \otimes_A\eta_c \ne 0
& \text{ for some } a_i \in E_A, \label{eq:egei}\\
( \eta_{c(a_j)}\otimes_B \xi_{d(a_j)}) \otimes_A\eta_c \ne 0
& \text{ for some } a_j \in E_A. \label{eq:egej}
\end{align}
Then we have $t(a_i) = t(a_j)$ in $V_A$.
\item 
For $d \in \E_D$, suppose that 
\begin{align}
(\xi_{d(b_k)}\otimes_A\eta_{c(b_k)})\otimes_B
\xi_d  \ne 0
& \text{ for some } b_k \in E_B, \label{eq:gegk} \\
(\xi_{d(b_l)}\otimes_A\eta_{c(b_l)})\otimes_B
\xi_d  \ne 0
& \text{ for some } b_l \in E_B. \label{eq:gegl}
 \end{align}
Then we have $t(b_k) = t(b_l)$ in $V_B$.
\end{enumerate}
\end{lemma}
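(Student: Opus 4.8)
The plan is to reduce each non-vanishing hypothesis to a single entry condition on the relation matrices $\tilde{C},\tilde{D}$ by means of Lemma \ref{lem:4.7}, and then to convert that condition into an edge relation of $A^G$ (resp. $B^G$) by means of Lemma \ref{lem:4.5}. This mirrors the proof of Lemma \ref{lem:4.8}, but with the inner and outer tensor factors interchanged, so that the conclusion concerns \emph{terminal} vertices rather than source vertices.

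For (i), I would first apply Lemma \ref{lem:4.7}(i) to the associative triple $\eta_{c(a_i)}\otimes_B \xi_{d(a_i)}\otimes_A\eta_c$. Since $\tilde{C}(c(a_i),d(a_i))=1$ holds automatically by Lemma \ref{lem:4.1}(i), the hypothesis \eqref{eq:egei} collapses to $\tilde{D}(d(a_i),c)=1$, and likewise \eqref{eq:egej} gives $\tilde{D}(d(a_j),c)=1$. Next I would produce a single edge $a_k\in E_A$ with $c(a_k)=c$; granting this, Lemma \ref{lem:4.5}(i) converts the two $\tilde{D}$-conditions into $A^G(a_i,a_k)=\tilde{D}(d(a_i),c(a_k))=\tilde{D}(d(a_i),c)=1$ and $A^G(a_j,a_k)=\tilde{D}(d(a_j),c(a_k))=\tilde{D}(d(a_j),c)=1$. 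By the definition \eqref{eq:AG} of $A^G$ these read $t(a_i)=s(a_k)$ and $t(a_j)=s(a_k)$, whence $t(a_i)=t(a_j)$ in $V_A$.

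Part (ii) is handled symmetrically. I would apply Lemma \ref{lem:4.7}(ii) to $\xi_{d(b_k)}\otimes_A\eta_{c(b_k)}\otimes_B\xi_d$, use Lemma \ref{lem:4.1}(ii) to discard the automatic factor $\tilde{D}(d(b_k),c(b_k))=1$, and read off $\tilde{C}(c(b_k),d)=\tilde{C}(c(b_l),d)=1$. Choosing $b_m\in E_B$ with $d(b_m)=d$ and invoking Lemma \ref{lem:4.5}(ii) then yields $B^G(b_k,b_m)=B^G(b_l,b_m)=1$, that is $t(b_k)=s(b_m)=t(b_l)$ in $V_B$.

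The crux, and the only step not already packaged in the cited lemmas, is the existence of the common edge $a_k$ (resp. $b_m$), i.e. the surjectivity of $c\colon E_A\to\E_C$ (resp. $d\colon E_B\to\E_D$) onto the support of the nonzero basis vectors. The non-vanishing hypothesis forces $\eta_c\ne 0$ (resp. $\xi_d\ne 0$), so it suffices to show that $\eta_{c_0}\ne 0$ implies $c_0=c(a)$ for some $a\in E_A$. If no such $a$ existed, then Lemma \ref{lem:4.4}(i) would give $\eta_{c_0}\otimes_B\xi_d=0$ for every $d\in\E_D$; evaluating the right $\OA$-valued inner product through \eqref{eq:3.tensorA} gives $\langle \xi_d \mid \langle\eta_{c_0}\mid\eta_{c_0}\rangle_B\,\xi_d\rangle_A=0$ for all $d$, and a standard positivity-plus-fullness argument (set $p=\langle\eta_{c_0}\mid\eta_{c_0}\rangle_B\ge 0$, deduce $p^{1/2}\xi_d=0$ for all $d$, expand an arbitrary vector in the right basis $\{\xi_d\}$ of ${}_B\!Y_A$, and use fullness) forces $p=0$, hence $\eta_{c_0}=0$, a contradiction. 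This is exactly the surjectivity of $c\colon E_B\to\E_C$ already invoked in the proof of Lemma \ref{lem:4.8}, and the $E_B$/$\E_D$ version is identical after interchanging the roles of $A$ and $B$.
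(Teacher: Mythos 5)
Your proof is correct and follows essentially the same route as the paper's: Lemma \ref{lem:4.7} reduces the non-vanishing hypotheses to $\tilde{D}(d(a_i),c)=\tilde{D}(d(a_j),c)=1$, an edge $a_k\in E_A$ with $c(a_k)=c$ is produced, and Lemma \ref{lem:4.5} converts these into $A^G(a_i,a_k)=A^G(a_j,a_k)=1$, i.e. $t(a_i)=s(a_k)=t(a_j)$ (and symmetrically for (ii)). The only difference is that you justify the existence of $a_k$ carefully via Lemma \ref{lem:4.4} and a positivity/fullness argument, whereas the paper dispatches this with a one-line appeal to the surjectivity of $c$ on $E_A$ coming from the isomorphism $\phi_A$; this is a refinement of the same step, not a different approach.
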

\begin{proof}
(i)
Assume that the equalities \eqref{eq:egei} and \eqref{eq:egej} hold.
By Lemma \ref{lem:4.7},
we have 
$\tilde{D}(d(a_i),c) = \tilde{D}(d(a_j),c) =1.$
Since
$\varphi_A: S_a \in \OA \longrightarrow   \eta_{c(a)}\otimes_B\xi_{d(a)} 
\in {}_A\!X_B\otimes_B {}_B\!Y_A$
yields a bimodule isomorphism, we know that 
$c : E_A \longrightarrow \E_D$ is surjective, 
so that 
there exists $a_k \in E_A$ such that 
$c(a_k) = c$.
By Lemma \ref{lem:4.5},
we have
\begin{equation*}
A^G(a_i,a_k) = \tilde{D}(d(a_i),c(a_k)) =\tilde{C}(d(a_i),c) =1
\end{equation*}
so that
$t(a_i) =s(a_k)$,
and similarly  
$t(a_j) =s(a_k)$.
Hence  we obtain 
$t(a_i) =t(a_j)$.

(ii) is similarly shown to (i).
\end{proof}

Recall that $A$ is an $N\times N$ matrix and $B$ is an $M \times M$ matrix respectively,
and $G_A = (V_A, E_A)$ and $G_B=(V_B, E_B)$ are the associated directed graphs 
such that $|V_A| = N$ and $|V_B| = M$, respectively.
We will next construct 
an $N\times M$ nonnegative matrix $C$ and
an $M \times N$ nonnegative matrix $D$ in the following way.
Take arbitrary fixed 
$u \in V_A$ and $v \in V_B$, 
define the
$(u,v)$-component $C(u,v)$ of the matrix $C$ by setting
\begin{align*}
C(u,v)
=& |
\{ c(a_i) \in \E_C \mid  \text{there exist } a_i, a_j \in E_A \text{ and } b_k \in E_B
\text{ such that } \\
 & \hspace{1cm} u=s(a_i),\,  A^G(a_i,a_j) =1,  d(a_i) = d(b_k), c(a_j) = c(b_k), \,\, v= s(b_k) \} |
\end{align*}
as in Figure 1.
\begin{figure}[htbp]
\begin{center}
\unitlength 0.1in
\begin{picture}( 48.0200, 13.7900)(  4.9800,-31.6900)
%
\special{pn 8}%
\special{ar 644 3058 106 112  2.1890927 6.2831853}%
\special{ar 644 3058 106 112  0.0000000 2.1614167}%
%
\special{pn 8}%
\special{ar 1616 2020 106 112  2.1774014 6.2831853}%
\special{ar 1616 2020 106 112  0.0000000 2.1535358}%
%
\special{pn 8}%
\special{ar 2588 3058 106 112  2.1822139 6.2831853}%
\special{ar 2588 3058 106 112  0.0000000 2.1535358}%
%
\special{pn 8}%
\special{ar 4532 3058 106 112  2.1822139 6.2831853}%
\special{ar 4532 3058 106 112  0.0000000 2.1535358}%
%
\special{pn 8}%
\special{ar 3560 2020 106 112  2.1842564 6.2831853}%
\special{ar 3560 2020 106 112  0.0000000 2.1614167}%
%
\special{pn 8}%
\special{pa 3658 2124}%
\special{pa 4436 2954}%
\special{fp}%
\special{sh 1}%
\special{pa 4436 2954}%
\special{pa 4404 2892}%
\special{pa 4400 2916}%
\special{pa 4376 2920}%
\special{pa 4436 2954}%
\special{fp}%
%
\special{pn 8}%
\special{pa 1712 2114}%
\special{pa 2490 2944}%
\special{fp}%
\special{sh 1}%
\special{pa 2490 2944}%
\special{pa 2460 2882}%
\special{pa 2454 2904}%
\special{pa 2430 2908}%
\special{pa 2490 2944}%
\special{fp}%
%
\special{pn 8}%
\special{pa 740 2944}%
\special{pa 1518 2114}%
\special{fp}%
\special{sh 1}%
\special{pa 1518 2114}%
\special{pa 1458 2150}%
\special{pa 1482 2154}%
\special{pa 1488 2176}%
\special{pa 1518 2114}%
\special{fp}%
%
\special{pn 8}%
\special{pa 2694 2944}%
\special{pa 3472 2114}%
\special{fp}%
\special{sh 1}%
\special{pa 3472 2114}%
\special{pa 3412 2150}%
\special{pa 3436 2154}%
\special{pa 3442 2176}%
\special{pa 3472 2114}%
\special{fp}%
\put(19.5500,-26.9400){\makebox(0,0){$d(a_i)$}}%
\put(30.8300,-22.3800){\makebox(0,0){$c(b_k)$}}%
\put(31.4000,-27.0000){\makebox(0,0){$c(a_j)$}}%
\put(11.9000,-27.0000){\makebox(0,0){$c(a_i)$}}%
%
\special{pn 8}%
\special{pa 818 3078}%
\special{pa 2432 3078}%
\special{fp}%
\special{sh 1}%
\special{pa 2432 3078}%
\special{pa 2366 3058}%
\special{pa 2380 3078}%
\special{pa 2366 3098}%
\special{pa 2432 3078}%
\special{fp}%
%
\special{pn 8}%
\special{pa 1762 2010}%
\special{pa 3376 2010}%
\special{fp}%
\special{sh 1}%
\special{pa 3376 2010}%
\special{pa 3308 1990}%
\special{pa 3322 2010}%
\special{pa 3308 2030}%
\special{pa 3376 2010}%
\special{fp}%
%
\special{pn 8}%
\special{pa 3686 2010}%
\special{pa 5300 2010}%
\special{fp}%
\special{sh 1}%
\special{pa 5300 2010}%
\special{pa 5234 1990}%
\special{pa 5248 2010}%
\special{pa 5234 2030}%
\special{pa 5300 2010}%
\special{fp}%
%
\special{pn 8}%
\special{pa 2752 3078}%
\special{pa 4368 3078}%
\special{fp}%
\special{sh 1}%
\special{pa 4368 3078}%
\special{pa 4300 3058}%
\special{pa 4314 3078}%
\special{pa 4300 3098}%
\special{pa 4368 3078}%
\special{fp}%
\put(15.6600,-32.4400){\makebox(0,0){$a_i$}}%
\put(25.0000,-18.7500){\makebox(0,0){$b_k$}}%
\put(34.8200,-32.4400){\makebox(0,0){$a_j$}}%
\put(15.9500,-20.3100){\makebox(0,0){$v$}}%
\put(6.3300,-30.7800){\makebox(0,0){$u$}}%
\put(20.4300,-22.2800){\makebox(0,0){$d(b_k)$}}%
\end{picture}%
\end{center}
\caption{}
\end{figure}

We similarly define 
the
$(v,u)$-component $D(v,u)$ of the matrix $D$ by setting
\begin{align*}
D(v,u)
=& |
\{ d(b_k) \in \E_D \mid  \text{there exist } b_k, b_l \in E_B \text{ and } a_j \in E_A
\text{ such that } \\
& \hspace{1cm} v=s(b_k), \, B^G(b_k,b_l) =1,  c(b_k) = c(a_j), d(b_l) = d(a_j), \,\, u= s(a_j) \} |
\end{align*}
as in Figure 2.
\begin{figure}[htbp]
\begin{center}
\unitlength 0.1in
\begin{picture}( 48.4500, 13.7900)( 11.0000,-31.5900)
%
\special{pn 8}%
\special{ar 5080 2010 106 112  2.1842564 6.2831853}%
\special{ar 5080 2010 106 112  0.0000000 2.1614167}%
%
\special{pn 8}%
\special{ar 1256 2010 106 112  2.1774014 6.2831853}%
\special{ar 1256 2010 106 112  0.0000000 2.1535358}%
%
\special{pn 8}%
\special{ar 2228 3048 106 112  2.1822139 6.2831853}%
\special{ar 2228 3048 106 112  0.0000000 2.1535358}%
%
\special{pn 8}%
\special{ar 4172 3048 106 112  2.1822139 6.2831853}%
\special{ar 4172 3048 106 112  0.0000000 2.1535358}%
%
\special{pn 8}%
\special{ar 3200 2010 106 112  2.1842564 6.2831853}%
\special{ar 3200 2010 106 112  0.0000000 2.1614167}%
%
\special{pn 8}%
\special{pa 3298 2114}%
\special{pa 4076 2944}%
\special{fp}%
\special{sh 1}%
\special{pa 4076 2944}%
\special{pa 4044 2882}%
\special{pa 4040 2906}%
\special{pa 4016 2910}%
\special{pa 4076 2944}%
\special{fp}%
%
\special{pn 8}%
\special{pa 1352 2104}%
\special{pa 2130 2934}%
\special{fp}%
\special{sh 1}%
\special{pa 2130 2934}%
\special{pa 2100 2872}%
\special{pa 2094 2894}%
\special{pa 2070 2898}%
\special{pa 2130 2934}%
\special{fp}%
%
\special{pn 8}%
\special{pa 4270 2960}%
\special{pa 5048 2132}%
\special{fp}%
\special{sh 1}%
\special{pa 5048 2132}%
\special{pa 4988 2166}%
\special{pa 5012 2170}%
\special{pa 5018 2194}%
\special{pa 5048 2132}%
\special{fp}%
%
\special{pn 8}%
\special{pa 2334 2934}%
\special{pa 3112 2104}%
\special{fp}%
\special{sh 1}%
\special{pa 3112 2104}%
\special{pa 3052 2140}%
\special{pa 3076 2144}%
\special{pa 3082 2166}%
\special{pa 3112 2104}%
\special{fp}%
\put(27.2300,-22.2800){\makebox(0,0){$c(b_k)$}}%
\put(28.5900,-26.7000){\makebox(0,0){$c(a_j)$}}%
\put(35.7900,-26.8400){\makebox(0,0){$d(a_j)$}}%
%
\special{pn 8}%
\special{pa 1402 2000}%
\special{pa 3016 2000}%
\special{fp}%
\special{sh 1}%
\special{pa 3016 2000}%
\special{pa 2948 1980}%
\special{pa 2962 2000}%
\special{pa 2948 2020}%
\special{pa 3016 2000}%
\special{fp}%
%
\special{pn 8}%
\special{pa 3326 2000}%
\special{pa 4940 2000}%
\special{fp}%
\special{sh 1}%
\special{pa 4940 2000}%
\special{pa 4874 1980}%
\special{pa 4888 2000}%
\special{pa 4874 2020}%
\special{pa 4940 2000}%
\special{fp}%
%
\special{pn 8}%
\special{pa 2392 3068}%
\special{pa 4008 3068}%
\special{fp}%
\special{sh 1}%
\special{pa 4008 3068}%
\special{pa 3940 3048}%
\special{pa 3954 3068}%
\special{pa 3940 3088}%
\special{pa 4008 3068}%
\special{fp}%
\put(21.4000,-18.6500){\makebox(0,0){$b_k$}}%
\put(31.2200,-32.3400){\makebox(0,0){$a_j$}}%
\put(12.3500,-20.2100){\makebox(0,0){$v$}}%
\put(22.3000,-30.6000){\makebox(0,0){$u$}}%
\put(16.8300,-22.1800){\makebox(0,0){$d(b_k)$}}%
\put(41.0000,-18.8000){\makebox(0,0){$b_l$}}%
\put(36.2000,-22.2000){\makebox(0,0){$d(b_l)$}}%
%
\special{pn 8}%
\special{pa 4330 3060}%
\special{pa 5946 3060}%
\special{fp}%
\special{sh 1}%
\special{pa 5946 3060}%
\special{pa 5878 3040}%
\special{pa 5892 3060}%
\special{pa 5878 3080}%
\special{pa 5946 3060}%
\special{fp}%
\end{picture}%
\end{center}
\caption{}
\end{figure}
We thus have 
$N \times M$ matrix $C=[C(u,v)]_{u\in V_A,v\in V_B}$,
and 
$M \times N$ matrix $D=[D(v,u)]_{v\in V_B,u\in V_A}$.

Before reaching the theorem,
we provide another lemma.
\begin{lemma}
\label{lem:4.10}
\hspace{6cm}
\begin{enumerate}
\renewcommand{\theenumi}{\roman{enumi}}
\renewcommand{\labelenumi}{\textup{(\theenumi)}}
\item 
For $c(a) \in \E_C$ with $ a \in E_A$, 
the vertices $u \in V_A$ and $v \in V_B$ satisfying $c(a) \in C(u,v)$
are unique. 
This means that if 
$c(a) \in C(u',v')$, then $u=u'$ and $v=v'$. 
\item 
For $d(b) \in \E_D$ with $b \in E_B$, 
the vertices  $v \in V_B$ and $u \in V_A$ satisfying $d(b) \in D(v,u)$
are unique. 
This means that if 
$d(b) \in D(v',u')$, then $v=v'$ and $u=u'$. 
\end{enumerate}
\end{lemma}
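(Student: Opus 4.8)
The plan is to prove the two uniqueness assertions of part (i) separately, since the source vertex $u\in V_A$ and the middle vertex $v\in V_B$ attached to an element $c(a)\in\E_C$ are governed by different earlier results; part (ii) will then follow by the evident symmetry $A\leftrightarrow B$, $C\leftrightarrow D$, $\eta\leftrightarrow\xi$. Write $\mathcal{C}(u,v)$ for the set whose cardinality is $C(u,v)$, so that a membership $c(a)\in\mathcal{C}(u,v)$ means $c(a)=c(a_i)$ for some $a_i\in E_A$ with $u=s(a_i)$ that is completed by data $a_j\in E_A$, $b_k\in E_B$ as in the definition, with $v=s(b_k)$, $A^G(a_i,a_j)=1$, $d(a_i)=d(b_k)$ and $c(a_j)=c(b_k)$. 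Thus if $c(a)\in\mathcal{C}(u,v)\cap\mathcal{C}(u',v')$ we obtain $a_i,a_{i'}\in E_A$ with $c(a_i)=c(a_{i'})=c(a)$, together with completing data $(a_j,b_k)$ and $(a_{j'},b_{k'})$, and the whole statement reduces to the two claims $s(a_i)=s(a_{i'})$ (giving $u=u'$) and $s(b_k)=s(b_{k'})$ (giving $v=v'$). Note that the auxiliary edges $a_j,a_{j'}$ are needed only for the definition and play no role below; I only use $a_i,a_{i'}$ and $b_k,b_{k'}$.

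For the first claim I would exploit Lemma \ref{lem:4.5}(i), which identifies $A^G(a_0,a)$ with $\tilde{D}(d(a_0),c(a))$ for all edges. Since $c(a_i)=c(a_{i'})$, this gives $A^G(a_0,a_i)=\tilde{D}(d(a_0),c(a_i))=\tilde{D}(d(a_0),c(a_{i'}))=A^G(a_0,a_{i'})$ for every $a_0\in E_A$; that is, the columns of $A^G$ indexed by $a_i$ and by $a_{i'}$ coincide. The support of the $a_i$-column is exactly the set of edges terminating at $s(a_i)$, and that of the $a_{i'}$-column the edges terminating at $s(a_{i'})$. Because $A$ is irreducible, every vertex has an incoming edge, so both supports are nonempty; picking any common $a_0$ yields $t(a_0)=s(a_i)$ and $t(a_0)=s(a_{i'})$, whence $s(a_i)=s(a_{i'})$.

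For the second claim I would feed the nonvanishing criterion of Lemma \ref{lem:4.7}(i) into Lemma \ref{lem:4.8}(i). Put $c:=c(a_i)=c(a_{i'})$. From Lemma \ref{lem:4.1}(i) we have $\tilde{C}(c(a_i),d(a_i))=1$, and $d(a_i)=d(b_k)$ turns this into $\tilde{C}(c,d(b_k))=1$; combined with $\tilde{D}(d(b_k),c(b_k))=1$ from Lemma \ref{lem:4.1}(ii), Lemma \ref{lem:4.7}(i) shows $\eta_c\otimes_B\xi_{d(b_k)}\otimes_A\eta_{c(b_k)}\ne 0$, i.e. $\eta_c\otimes_B(\xi_{d(b_k)}\otimes_A\eta_{c(b_k)})\ne 0$. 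The same computation with $a_{i'},b_{k'}$ (using $d(a_{i'})=d(b_{k'})$ and $c(a_{i'})=c$) gives $\eta_c\otimes_B(\xi_{d(b_{k'})}\otimes_A\eta_{c(b_{k'})})\ne 0$. Lemma \ref{lem:4.8}(i) then forces $s(b_k)=s(b_{k'})$, which is $v=v'$. This completes part (i), and part (ii) is obtained verbatim after interchanging the roles of the two algebras, replacing Lemma \ref{lem:4.5}(i), Lemma \ref{lem:4.8}(i) and the irreducibility of $A$ by Lemma \ref{lem:4.5}(ii), Lemma \ref{lem:4.8}(ii) and the irreducibility of $B$.

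The only real subtlety, and the step I would be most careful about, is the bookkeeping: recognising that the two endpoints of the reconstructed edge $c(a)$, its tail $u\in V_A$ and its head $v\in V_B$, are controlled by two genuinely different earlier results, namely the matrix reconstruction Lemma \ref{lem:4.5} together with irreducibility for the tail, and the source consistency Lemma \ref{lem:4.8} for the head. Everything else is a routine unwinding of the defining conditions of $\mathcal{C}(u,v)$ and of the tensor-product inner-product formulas, so no new estimate or construction is required.
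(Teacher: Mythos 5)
Your proof is correct and follows essentially the same route as the paper: the uniqueness of $v$ is obtained exactly as in the paper by verifying the nonvanishing of $\eta_{c(a)}\otimes_B(\xi_{d(b_k)}\otimes_A\eta_{c(b_k)})$ and invoking Lemma \ref{lem:4.8}. The only (harmless) divergence is in the uniqueness of $u$, where the paper cites Lemma \ref{lem:4.9} while you inline the equivalent argument directly from Lemma \ref{lem:4.5} and irreducibility by comparing the columns of $A^G$ indexed by $a_i$ and $a_{i'}$.
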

\begin{proof}
(i)
For 
$c(a) \in C(u,v)$, 
take 
$b_k \in E_B$ such that 
$v=s(b_k)$ and $d(a) = d(b_k)$.
As
\begin{equation}
 \eta_{c(a)}\otimes_B (\xi_{d(b_k)} \otimes_A\eta_{c(b_k)}) \ne 0, \label{ecadbk}
\end{equation}
by Lemma \ref{lem:4.8}, 
the vertex $s(b_k)$ does not depend on the choice 
of $b_k$ satisfying \eqref{ecadbk},
so that the vertex $v$ is uniquely determined by $c(a)$.

We also know  that $u = s(a)$.
Take 
$a_i \in E_A$ such that 
\begin{equation}
( \eta_{c(a_i)}\otimes_B \xi_{d(a_i)}) \otimes_A\eta_{c(a)} \ne 0. \label{ecadai}
\end{equation}
By Lemma \ref{lem:4.9}, 
the vertex $t(a_i)$, which is $s(a) =u$ does not depend on the choice 
of $a_i$ satisfying \eqref{ecadai},
so that the vertex $u$ is uniquely determined by $c(a)$.

(ii) is smilarly shown to (i).
 \end{proof}

\begin{theorem}\label{thm:main2}
$A =CD$ and $B = DC$.
\end{theorem}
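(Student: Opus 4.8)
The plan is to reconstruct, from the bimodule data, the bipartite edge structure underlying the factorization and then to verify $A=CD$ and $B=DC$ entrywise through a counting bijection built on Lemmas \ref{lem:4.1}--\ref{lem:4.10}.

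First I would pin down the edge correspondence and the vertex bookkeeping. Since $\phi_A$ is a bimodule isomorphism, $a\mapsto(c(a),d(a))$ injects $E_A$ into $\E_C\times\E_D$; by Lemma \ref{lem:4.4}(i) its image is $\{(c,d)\mid\eta_c\otimes_B\xi_d\neq0\}$, and by Lemma \ref{lem:4.7}(i) together with Lemma \ref{lem:4.1} and the surjectivity of $d\colon E_B\to\E_D$ this image equals $\{(c,d)\mid\tilde{C}(c,d)=1\}$. Thus $E_A\cong\{(c,d)\mid\tilde{C}(c,d)=1\}$, and symmetrically $E_B\cong\{(d,c)\mid\tilde{D}(d,c)=1\}$. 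Next, Lemma \ref{lem:4.10} assigns to each $c\in\E_C$ a unique pair $(s_C(c),t_C(c))\in V_A\times V_B$ with $c\in C(s_C(c),t_C(c))$, and to each $d\in\E_D$ a unique pair $(s_D(d),t_D(d))\in V_B\times V_A$ with $d\in D(s_D(d),t_D(d))$. Unwinding Lemma \ref{lem:4.10} gives $s_C(c(a))=s(a)$ for $a\in E_A$ and $s_D(d(b))=s(b)$ for $b\in E_B$. The two remaining diagonal identities, $t_C(c(b))=t(b)$ for $b\in E_B$ and $t_D(d(a))=t(a)$ for $a\in E_A$, I would obtain as follows: by Lemmas \ref{lem:4.7} and \ref{lem:4.8}, $t_C(c)$ is the common source $s(b')$ of all $b'\in E_B$ with $\tilde{C}(c,d(b'))=1$, and Lemma \ref{lem:4.5}(ii) identifies these (when $c=c(b)$) as the $b'$ with $B^G(b,b')=1$, i.e. $s(b')=t(b)$, so $t_C(c(b))=t(b)$; the identity $t_D(d(a))=t(a)$ is obtained symmetrically from Lemma \ref{lem:4.5}(i).

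With this dictionary, unwinding the definitions of the matrices (and using that, by Lemma \ref{lem:4.10}, the defining sets of $C(u,v)$ are disjoint in $\E_C$) shows that $C$ and $D$ are the edge-count matrices of the reconstructed edge sets, namely $C(u,v)=|\{c\in\E_C\mid s_C(c)=u,\ t_C(c)=v\}|$ and $D(v,u)=|\{d\in\E_D\mid s_D(d)=v,\ t_D(d)=u\}|$. Hence $(CD)(u,u')$ counts composable pairs, $(CD)(u,u')=|\{(c,d)\mid s_C(c)=u,\ t_C(c)=s_D(d),\ t_D(d)=u'\}|$, while under the bijection $a\leftrightarrow(c(a),d(a))$ one gets $A(u,u')=|\{(c,d)\mid\tilde{C}(c,d)=1,\ s_C(c)=u,\ t_D(d)=u'\}|$. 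So $A=CD$ reduces to the single equivalence
\[
\tilde{C}(c,d)=1 \iff t_C(c)=s_D(d),\qquad c\in\E_C,\ d\in\E_D,
\]
after which intersecting both sides with $\{s_C(c)=u,\ t_D(d)=u'\}$ settles each entry.

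I expect this equivalence to be the main obstacle, and I would prove it via the $B$-side. For the forward implication, if $\tilde{C}(c,d)=1$ then, choosing $b\in E_B$ with $d(b)=d$, the edge $b$ is among those defining $t_C(c)$, so $t_C(c)=s(b)=s_D(d)$. For the converse, suppose $t_C(c)=s_D(d)=v$ and choose $b_0,b_1\in E_B$ with $c(b_0)=c$ and $d(b_1)=d$ (surjectivity of $c,d$ on $E_B$, as noted in the proofs of Lemmas \ref{lem:4.8} and \ref{lem:4.9}); then $t(b_0)=t_C(c(b_0))=v$ and $s(b_1)=s_D(d(b_1))=v$, whence $B^G(b_0,b_1)=1$, and Lemma \ref{lem:4.5}(ii) yields $\tilde{C}(c,d)=\tilde{C}(c(b_0),d(b_1))=B^G(b_0,b_1)=1$. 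This closes the equivalence and proves $A=CD$. The identity $B=DC$ follows by the symmetric argument, interchanging $A$ with $B$, $\E_C$ with $\E_D$, and $\eta$ with $\xi$, and invoking Lemma \ref{lem:4.5}(i) in place of Lemma \ref{lem:4.5}(ii).
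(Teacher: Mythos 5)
Your proposal is correct and follows essentially the same route as the paper: both proofs establish an entrywise counting bijection between edges of $G_A$ and composable pairs from the reconstructed sets underlying $C$ and $D$, resting on the same chain of Lemmas \ref{lem:4.5}, \ref{lem:4.6}, \ref{lem:4.7}--\ref{lem:4.10} (your ``converse'' of the equivalence $\tilde{C}(c,d)=1 \Leftrightarrow t_C(c)=s_D(d)$ is exactly the paper's Figures 4--6 argument, with the auxiliary edge $b_0$ obtained from surjectivity of $c:E_B\to\E_C$ rather than by stepping backward in $G_A$). The only difference is organizational: you package the vertex bookkeeping into source/target maps $s_C,t_C,s_D,t_D$ and isolate a single equivalence, which is a cleaner but not substantively different presentation.
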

\begin{proof}
We will prove $A = CD$.
Take arbitrary vertices $u_1, u_2 \in V_A$.
We note that
\begin{equation*}
A(u_1,u_2) = |\{a_i \in E_A \mid u_1 = s(a_i), \, u_2 = t(a_i) \}|.
\end{equation*} 
We will define a correspondence 
\begin{equation}
a_i \in A(u_1, u_2) \longrightarrow c(a_i)\times d(a_i) \in C(u_1,v) \times D(v,u_2) 
\text{ for some } v \in V_B.  
\end{equation}
Take an arbitrary edge $a_i \in E_A$ with
$u_1 = s(a_i), u_2 = t(a_i)$.
One may find an edge $a_j \in E_A$ such that 
$t(a_i) = s(a_j)$, and hence
$A^G(a_i, a_j) =1$.
By Lemma \ref{lem:4.5},
we have $\tilde{D}(d(a_i), c(a_j)) = 1$.
By Lemma \ref{lem:4.6},
there exists a unique edge $b_k \in E_B$ 
such that 
$d(a_i) = d(b_k), c(a_j) = c(b_k)$.
We put
$v = s(b_k) \in V_B$ so that 
$c(a_i) \in C(u_1,v)$.
The vertex $v$ is uniquely determined by $c(a_i)$
from Lemma \ref{lem:4.10}.

One may further 
find an edge $a_k \in E_A$ such that 
$t(a_j) = s(a_k)$, and hence
$A^G(a_j, a_k) =1$.
By Lemma \ref{lem:4.5},
we have $\tilde{D}(d(a_j), c(a_k)) = 1$.
By Lemma \ref{lem:4.6},
there exists an edge $b_l \in E_B$ 
such that 
$d(a_j) = d(b_l), c(a_k) = c(b_l)$,
so that 
$d(a_i)\in D(v,u_2)$.
The vertex $v$ is uniquely determined by $d(a_i)$
from Lemma \ref{lem:4.10}.
The situation is figured in Figure 3.
\begin{figure}[htbp]
\begin{center}
\unitlength 0.1in
\begin{picture}( 61.4000, 13.7900)(  4.1500,-31.6900)
%
\special{pn 8}%
\special{ar 644 3058 106 112  2.1890927 6.2831853}%
\special{ar 644 3058 106 112  0.0000000 2.1614167}%
%
\special{pn 8}%
\special{ar 1616 2020 106 112  2.1774014 6.2831853}%
\special{ar 1616 2020 106 112  0.0000000 2.1535358}%
%
\special{pn 8}%
\special{ar 2588 3058 106 112  2.1822139 6.2831853}%
\special{ar 2588 3058 106 112  0.0000000 2.1535358}%
%
\special{pn 8}%
\special{ar 4532 3058 106 112  2.1822139 6.2831853}%
\special{ar 4532 3058 106 112  0.0000000 2.1535358}%
%
\special{pn 8}%
\special{ar 3560 2020 106 112  2.1842564 6.2831853}%
\special{ar 3560 2020 106 112  0.0000000 2.1614167}%
%
\special{pn 8}%
\special{pa 3658 2124}%
\special{pa 4436 2954}%
\special{fp}%
\special{sh 1}%
\special{pa 4436 2954}%
\special{pa 4404 2892}%
\special{pa 4400 2916}%
\special{pa 4376 2920}%
\special{pa 4436 2954}%
\special{fp}%
%
\special{pn 8}%
\special{pa 1712 2114}%
\special{pa 2490 2944}%
\special{fp}%
\special{sh 1}%
\special{pa 2490 2944}%
\special{pa 2460 2882}%
\special{pa 2454 2904}%
\special{pa 2430 2908}%
\special{pa 2490 2944}%
\special{fp}%
%
\special{pn 8}%
\special{pa 740 2944}%
\special{pa 1518 2114}%
\special{fp}%
\special{sh 1}%
\special{pa 1518 2114}%
\special{pa 1458 2150}%
\special{pa 1482 2154}%
\special{pa 1488 2176}%
\special{pa 1518 2114}%
\special{fp}%
%
\special{pn 8}%
\special{pa 2694 2944}%
\special{pa 3472 2114}%
\special{fp}%
\special{sh 1}%
\special{pa 3472 2114}%
\special{pa 3412 2150}%
\special{pa 3436 2154}%
\special{pa 3442 2176}%
\special{pa 3472 2114}%
\special{fp}%
\put(19.5500,-26.9400){\makebox(0,0){$d(a_i)$}}%
\put(30.8300,-22.3800){\makebox(0,0){$c(b_k)$}}%
\put(31.4000,-27.0000){\makebox(0,0){$c(a_j)$}}%
\put(11.9000,-27.0000){\makebox(0,0){$c(a_i)$}}%
\put(39.3900,-26.9400){\makebox(0,0){$d(a_j)$}}%
%
\special{pn 8}%
\special{pa 818 3078}%
\special{pa 2432 3078}%
\special{fp}%
\special{sh 1}%
\special{pa 2432 3078}%
\special{pa 2366 3058}%
\special{pa 2380 3078}%
\special{pa 2366 3098}%
\special{pa 2432 3078}%
\special{fp}%
%
\special{pn 8}%
\special{pa 1762 2010}%
\special{pa 3376 2010}%
\special{fp}%
\special{sh 1}%
\special{pa 3376 2010}%
\special{pa 3308 1990}%
\special{pa 3322 2010}%
\special{pa 3308 2030}%
\special{pa 3376 2010}%
\special{fp}%
%
\special{pn 8}%
\special{pa 3686 2010}%
\special{pa 5300 2010}%
\special{fp}%
\special{sh 1}%
\special{pa 5300 2010}%
\special{pa 5234 1990}%
\special{pa 5248 2010}%
\special{pa 5234 2030}%
\special{pa 5300 2010}%
\special{fp}%
%
\special{pn 8}%
\special{pa 2752 3078}%
\special{pa 4368 3078}%
\special{fp}%
\special{sh 1}%
\special{pa 4368 3078}%
\special{pa 4300 3058}%
\special{pa 4314 3078}%
\special{pa 4300 3098}%
\special{pa 4368 3078}%
\special{fp}%
\put(15.6600,-32.4400){\makebox(0,0){$a_i$}}%
\put(25.0000,-18.7500){\makebox(0,0){$b_k$}}%
\put(34.8200,-32.4400){\makebox(0,0){$a_j$}}%
\put(15.9500,-20.3100){\makebox(0,0){$v$}}%
\put(20.4300,-22.2800){\makebox(0,0){$d(b_k)$}}%
\put(25.8000,-30.8000){\makebox(0,0){$u_2$}}%
\put(6.4000,-30.8000){\makebox(0,0){$u_1$}}%
\put(43.2000,-18.9000){\makebox(0,0){$b_l$}}%
%
\special{pn 8}%
\special{pa 4600 2960}%
\special{pa 5378 2130}%
\special{fp}%
\special{sh 1}%
\special{pa 5378 2130}%
\special{pa 5318 2166}%
\special{pa 5342 2170}%
\special{pa 5348 2192}%
\special{pa 5378 2130}%
\special{fp}%
%
\special{pn 8}%
\special{ar 5450 2020 106 112  2.1842564 6.2831853}%
\special{ar 5450 2020 106 112  0.0000000 2.1614167}%
\put(39.9000,-22.6000){\makebox(0,0){$d(b_l)$}}%
%
\special{pn 8}%
\special{ar 6450 3052 106 112  2.1774014 6.2831853}%
\special{ar 6450 3052 106 112  0.0000000 2.1535358}%
%
\special{pn 8}%
\special{pa 4670 3074}%
\special{pa 6286 3074}%
\special{fp}%
\special{sh 1}%
\special{pa 6286 3074}%
\special{pa 6218 3054}%
\special{pa 6232 3074}%
\special{pa 6218 3094}%
\special{pa 6286 3074}%
\special{fp}%
\put(52.9000,-32.3000){\makebox(0,0){$a_k$}}%
\put(50.2000,-27.2000){\makebox(0,0){$c(a_k)$}}%
\put(49.2000,-22.5000){\makebox(0,0){$c(b_l)$}}%
\end{picture}%
\end{center}
\caption{}
\end{figure}

Conversely,
take an arbitrary fixed vertex $v \in V_B$.
Let $c(a) \in C(u_1,v), d(a')\in D(v,u_2)$.
By the condition
$d(a')\in D(v,u_2)$,
one may find
$b_k, b_l \in E_B, a_j \in E_A$ 
such that 
$B^G(b_k,b_l) =1$
and
$$
d(a') = d(b_k), \, v=s(b_k), \,  u_2 = s(a_j), \,  c(b_k) = c(a_j), \,  d(b_l) = d(a_j)
$$
as  in Figure 4.
\begin{figure}[htbp]
\begin{center}
\unitlength 0.1in
\begin{picture}( 40.8500, 13.7900)( 11.0000,-31.5900)
%
\special{pn 8}%
\special{ar 5080 2010 106 112  2.1842564 6.2831853}%
\special{ar 5080 2010 106 112  0.0000000 2.1614167}%
%
\special{pn 8}%
\special{ar 1256 2010 106 112  2.1774014 6.2831853}%
\special{ar 1256 2010 106 112  0.0000000 2.1535358}%
%
\special{pn 8}%
\special{ar 2228 3048 106 112  2.1822139 6.2831853}%
\special{ar 2228 3048 106 112  0.0000000 2.1535358}%
%
\special{pn 8}%
\special{ar 4172 3048 106 112  2.1822139 6.2831853}%
\special{ar 4172 3048 106 112  0.0000000 2.1535358}%
%
\special{pn 8}%
\special{ar 3200 2010 106 112  2.1842564 6.2831853}%
\special{ar 3200 2010 106 112  0.0000000 2.1614167}%
%
\special{pn 8}%
\special{pa 3298 2114}%
\special{pa 4076 2944}%
\special{fp}%
\special{sh 1}%
\special{pa 4076 2944}%
\special{pa 4044 2882}%
\special{pa 4040 2906}%
\special{pa 4016 2910}%
\special{pa 4076 2944}%
\special{fp}%
%
\special{pn 8}%
\special{pa 1352 2104}%
\special{pa 2130 2934}%
\special{fp}%
\special{sh 1}%
\special{pa 2130 2934}%
\special{pa 2100 2872}%
\special{pa 2094 2894}%
\special{pa 2070 2898}%
\special{pa 2130 2934}%
\special{fp}%
%
\special{pn 8}%
\special{pa 4270 2960}%
\special{pa 5048 2132}%
\special{fp}%
\special{sh 1}%
\special{pa 5048 2132}%
\special{pa 4988 2166}%
\special{pa 5012 2170}%
\special{pa 5018 2194}%
\special{pa 5048 2132}%
\special{fp}%
%
\special{pn 8}%
\special{pa 2334 2934}%
\special{pa 3112 2104}%
\special{fp}%
\special{sh 1}%
\special{pa 3112 2104}%
\special{pa 3052 2140}%
\special{pa 3076 2144}%
\special{pa 3082 2166}%
\special{pa 3112 2104}%
\special{fp}%
\put(27.2300,-22.2800){\makebox(0,0){$c(b_k)$}}%
\put(28.1000,-26.7000){\makebox(0,0){$c(a_j)$}}%
\put(35.7900,-26.8400){\makebox(0,0){$d(a_j)$}}%
%
\special{pn 8}%
\special{pa 1402 2000}%
\special{pa 3016 2000}%
\special{fp}%
\special{sh 1}%
\special{pa 3016 2000}%
\special{pa 2948 1980}%
\special{pa 2962 2000}%
\special{pa 2948 2020}%
\special{pa 3016 2000}%
\special{fp}%
%
\special{pn 8}%
\special{pa 3326 2000}%
\special{pa 4940 2000}%
\special{fp}%
\special{sh 1}%
\special{pa 4940 2000}%
\special{pa 4874 1980}%
\special{pa 4888 2000}%
\special{pa 4874 2020}%
\special{pa 4940 2000}%
\special{fp}%
%
\special{pn 8}%
\special{pa 2392 3068}%
\special{pa 4008 3068}%
\special{fp}%
\special{sh 1}%
\special{pa 4008 3068}%
\special{pa 3940 3048}%
\special{pa 3954 3068}%
\special{pa 3940 3088}%
\special{pa 4008 3068}%
\special{fp}%
\put(21.4000,-18.6500){\makebox(0,0){$b_k$}}%
\put(31.2200,-32.3400){\makebox(0,0){$a_j$}}%
\put(12.3500,-20.2100){\makebox(0,0){$v$}}%
\put(16.8300,-22.1800){\makebox(0,0){$d(b_k)$}}%
\put(41.0000,-18.8000){\makebox(0,0){$b_l$}}%
\put(36.2000,-22.2000){\makebox(0,0){$d(b_l)$}}%
\put(22.3000,-30.6000){\makebox(0,0){$u_2$}}%
\put(16.0000,-26.8000){\makebox(0,0){$d(a')$}}%
\end{picture}%
\end{center}
\caption{}
\end{figure}
By the condition
$c(a)\in C(u_1,v)$,
one may find
$a_i', a_j'  \in E_A, b_k' \in E_B$ 
such that 
$A^G(a_i', a_j') =1$
and
$$
c(a) = c(a_i'), \, u_1=s(a_i'), \,  v = s(b_k'), \,  
d(a_i') = d(b_k'), \,  c(a_j') = c(b_k')
$$
as in Figure 5.
\begin{figure}[htbp]
\begin{center}
\unitlength 0.1in
\begin{picture}( 42.3700, 13.8400)(  4.0000,-31.6900)
%
\special{pn 8}%
\special{ar 644 3058 106 112  2.1890927 6.2831853}%
\special{ar 644 3058 106 112  0.0000000 2.1614167}%
%
\special{pn 8}%
\special{ar 1616 2020 106 112  2.1774014 6.2831853}%
\special{ar 1616 2020 106 112  0.0000000 2.1535358}%
%
\special{pn 8}%
\special{ar 2588 3058 106 112  2.1822139 6.2831853}%
\special{ar 2588 3058 106 112  0.0000000 2.1535358}%
%
\special{pn 8}%
\special{ar 4532 3058 106 112  2.1822139 6.2831853}%
\special{ar 4532 3058 106 112  0.0000000 2.1535358}%
%
\special{pn 8}%
\special{ar 3560 2020 106 112  2.1842564 6.2831853}%
\special{ar 3560 2020 106 112  0.0000000 2.1614167}%
%
\special{pn 8}%
\special{pa 3658 2124}%
\special{pa 4436 2954}%
\special{fp}%
\special{sh 1}%
\special{pa 4436 2954}%
\special{pa 4404 2892}%
\special{pa 4400 2916}%
\special{pa 4376 2920}%
\special{pa 4436 2954}%
\special{fp}%
%
\special{pn 8}%
\special{pa 1712 2114}%
\special{pa 2490 2944}%
\special{fp}%
\special{sh 1}%
\special{pa 2490 2944}%
\special{pa 2460 2882}%
\special{pa 2454 2904}%
\special{pa 2430 2908}%
\special{pa 2490 2944}%
\special{fp}%
%
\special{pn 8}%
\special{pa 740 2944}%
\special{pa 1518 2114}%
\special{fp}%
\special{sh 1}%
\special{pa 1518 2114}%
\special{pa 1458 2150}%
\special{pa 1482 2154}%
\special{pa 1488 2176}%
\special{pa 1518 2114}%
\special{fp}%
%
\special{pn 8}%
\special{pa 2694 2944}%
\special{pa 3472 2114}%
\special{fp}%
\special{sh 1}%
\special{pa 3472 2114}%
\special{pa 3412 2150}%
\special{pa 3436 2154}%
\special{pa 3442 2176}%
\special{pa 3472 2114}%
\special{fp}%
%
\special{pn 8}%
\special{pa 818 3078}%
\special{pa 2432 3078}%
\special{fp}%
\special{sh 1}%
\special{pa 2432 3078}%
\special{pa 2366 3058}%
\special{pa 2380 3078}%
\special{pa 2366 3098}%
\special{pa 2432 3078}%
\special{fp}%
%
\special{pn 8}%
\special{pa 1762 2010}%
\special{pa 3376 2010}%
\special{fp}%
\special{sh 1}%
\special{pa 3376 2010}%
\special{pa 3308 1990}%
\special{pa 3322 2010}%
\special{pa 3308 2030}%
\special{pa 3376 2010}%
\special{fp}%
%
\special{pn 8}%
\special{pa 2752 3078}%
\special{pa 4368 3078}%
\special{fp}%
\special{sh 1}%
\special{pa 4368 3078}%
\special{pa 4300 3058}%
\special{pa 4314 3078}%
\special{pa 4300 3098}%
\special{pa 4368 3078}%
\special{fp}%
\put(15.9500,-20.3100){\makebox(0,0){$v$}}%
\put(24.6000,-18.7000){\makebox(0,0){$b'_k$}}%
\put(20.5000,-22.4000){\makebox(0,0){$d(b'_k)$}}%
\put(30.5000,-22.3000){\makebox(0,0){$c(b'_k)$}}%
\put(14.5000,-32.3000){\makebox(0,0){$a'_i$}}%
\put(11.9000,-27.1000){\makebox(0,0){$c(a'_i)$}}%
\put(19.9000,-27.1000){\makebox(0,0){$d(a'_i)$}}%
\put(34.6000,-32.2000){\makebox(0,0){$a'_j$}}%
\put(31.2000,-27.2000){\makebox(0,0){$c(a'_j)$}}%
\put(39.4000,-27.2000){\makebox(0,0){$d(a'_j)$}}%
\put(6.4000,-30.7000){\makebox(0,0){$u_1$}}%
\put(25.9000,-30.8000){\makebox(0,0){$u_2$}}%
\put(6.7000,-27.3000){\makebox(0,0){$c(a)$}}%
\end{picture}%
\end{center}
\caption{}
\end{figure}
There exists $a_h' \in E_A$ such that 
$A^G(a_h', a_i') = 1$.
By Lemma \ref{lem:4.5},
we have $\tilde{D}(d(a_h'), c(a_i')) = 1$.
By Lemma \ref{lem:4.6},
there exists an edge $b_h' \in E_B$ 
such that 
$d(b_h') = d(a_h'), c(b_h') = c(a_i')$
as in Figure 6.
\begin{figure}[htbp]
\begin{center}
\unitlength 0.1in
\begin{picture}( 40.7000, 13.8000)(  6.9500,-31.9000)
%
\special{pn 8}%
\special{ar 800 2030 106 112  2.1842564 6.2831853}%
\special{ar 800 2030 106 112  0.0000000 2.1614167}%
%
\special{pn 8}%
\special{ar 2716 2040 106 112  2.1774014 6.2831853}%
\special{ar 2716 2040 106 112  0.0000000 2.1535358}%
%
\special{pn 8}%
\special{ar 3688 3078 106 112  2.1822139 6.2831853}%
\special{ar 3688 3078 106 112  0.0000000 2.1535358}%
%
\special{pn 8}%
\special{ar 1714 3078 106 112  2.1774014 6.2831853}%
\special{ar 1714 3078 106 112  0.0000000 2.1535358}%
%
\special{pn 8}%
\special{ar 4660 2040 106 112  2.1842564 6.2831853}%
\special{ar 4660 2040 106 112  0.0000000 2.1614167}%
%
\special{pn 8}%
\special{pa 878 2130}%
\special{pa 1656 2960}%
\special{fp}%
\special{sh 1}%
\special{pa 1656 2960}%
\special{pa 1626 2898}%
\special{pa 1620 2922}%
\special{pa 1596 2926}%
\special{pa 1656 2960}%
\special{fp}%
%
\special{pn 8}%
\special{pa 2812 2134}%
\special{pa 3590 2964}%
\special{fp}%
\special{sh 1}%
\special{pa 3590 2964}%
\special{pa 3560 2902}%
\special{pa 3554 2924}%
\special{pa 3530 2928}%
\special{pa 3590 2964}%
\special{fp}%
%
\special{pn 8}%
\special{pa 1852 2976}%
\special{pa 2630 2148}%
\special{fp}%
\special{sh 1}%
\special{pa 2630 2148}%
\special{pa 2570 2182}%
\special{pa 2594 2186}%
\special{pa 2598 2210}%
\special{pa 2630 2148}%
\special{fp}%
%
\special{pn 8}%
\special{pa 3794 2964}%
\special{pa 4572 2134}%
\special{fp}%
\special{sh 1}%
\special{pa 4572 2134}%
\special{pa 4512 2170}%
\special{pa 4536 2174}%
\special{pa 4542 2196}%
\special{pa 4572 2134}%
\special{fp}%
\put(41.8300,-22.5800){\makebox(0,0){$c(b_k)$}}%
%
\special{pn 8}%
\special{pa 2862 2030}%
\special{pa 4476 2030}%
\special{fp}%
\special{sh 1}%
\special{pa 4476 2030}%
\special{pa 4408 2010}%
\special{pa 4422 2030}%
\special{pa 4408 2050}%
\special{pa 4476 2030}%
\special{fp}%
%
\special{pn 8}%
\special{pa 950 2030}%
\special{pa 2564 2030}%
\special{fp}%
\special{sh 1}%
\special{pa 2564 2030}%
\special{pa 2498 2010}%
\special{pa 2512 2030}%
\special{pa 2498 2050}%
\special{pa 2564 2030}%
\special{fp}%
\put(36.0000,-18.9500){\makebox(0,0){$b_k$}}%
\put(26.9500,-20.5100){\makebox(0,0){$v$}}%
\put(31.4300,-22.4800){\makebox(0,0){$d(b_k)$}}%
\put(36.9000,-30.9000){\makebox(0,0){$u_2$}}%
\put(30.6000,-27.1000){\makebox(0,0){$d(a')$}}%
\put(16.8000,-19.2000){\makebox(0,0){$b'_h$}}%
\put(22.2000,-22.6000){\makebox(0,0){$c(b'_h)$}}%
\put(12.8000,-22.7000){\makebox(0,0){$d(b'_h)$}}%
\put(17.0000,-31.0000){\makebox(0,0){$u_1$}}%
\put(22.8000,-27.3000){\makebox(0,0){$c(a)$}}%
\end{picture}%
\end{center}
\caption{}
\end{figure}

As $t(b_h') = s(b_k)$,
we have
$B^G(b_h', b_k) =1.$ 
By Lemma \ref{lem:4.5},
we have $\tilde{C}(c(b_h'), d(b_k)) = 1$.
By Lemma \ref{lem:4.6},
there exists an edge $a_i \in E_A$ 
such that 
$c(b_h') = c(a_i), d(b_k) = d(a_i)$
so that 
$c(a) = c(a_i), d(a') = d(a_i).$
Therefore we have a bijective correspondence
\begin{equation}
a_i \in A(u_1, u_2) \longleftrightarrow 
c(a_i)\times d(a_i) \in \cup_{v \in E_B}C(u_1,v) \times D(v,u_2).  
\end{equation}
This implies that 
$A(u_1,u_2) = \sum_{v \in E_B}C(u_1,v)D(v,u_2)$
and hence $A=CD$.
We may similarly prove that $B = DC$. 
\end{proof}
\begin{remark}
Let $C, D$ be the matrices in the above proof.
Put the block matrix 
$
Z =
\begin{bmatrix}
0 & C \\
D & 0
\end{bmatrix}
$
with size $N+M$.
Then the associated matrix $Z^G$ satisfies
$Z^G(c,d) = \tilde{C}(c,d)$ 
and
$Z^G(d,c) = \tilde{D}(d,c)$
for
$c\in \E_C, d \in \E_C$,
so that 
$\tilde{C} =C^G, \tilde{D} = D^G$.
We also have  identifications between $\E_C$ and $E_C$
and similarly betqeen $\E_D $ and $E_D$.  
\end{remark}
Therefore we reach the main result of the paper.
\begin{theorem}\label{thm:main3}
Let $A, B$ be nonnegative square irreducible matrices
that are not any permutations.
Then 
$A$ and $B$ are elementary equivalent, that is,
$A = CD, B = DC$ for some nonnegative rectangular matrices $C, D$,
if and only if
the Cuntz--Krieger algebras 
$(\OA, \{S_a\}_{a \in E_A})$ and $(\OB, \{ S_b\}_{b\in E_B})$ 
 with the canonical right finite bases 
are basis relatedly elementary Morita equivalent.
\end{theorem}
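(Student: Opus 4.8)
The plan is to prove the two implications separately, since each has effectively been reduced to one of the two theorems already established in the preceding sections. The statement is an equivalence between an entirely matrix-theoretic condition (elementary equivalence $A=CD$, $B=DC$) and a bimodule-theoretic condition (basis related elementary Morita equivalence of the two Cuntz--Krieger algebras with their canonical bases), so the proof is purely an assembly of Theorem \ref{thm:main1} and Theorem \ref{thm:main2}.

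For the ``only if'' direction I would argue as follows. Suppose $A$ and $B$ are elementary equivalent, so that $A=CD$ and $B=DC$ for some nonnegative rectangular matrices $C,D$. This is exactly the hypothesis of Theorem \ref{thm:main1}, whose conclusion asserts that the Cuntz--Krieger algebras $(\OA,\{S_a\}_{a\in E_A})$ and $(\OB,\{S_b\}_{b\in E_B})$ with their canonical right finite bases are basis relatedly elementary Morita equivalent. Hence this implication is immediate. The only point to recall is that the construction in that proof builds the bimodule $\OZ=P_A\OZ P_A\oplus P_A\OZ P_B\oplus P_B\OZ P_A\oplus P_B\OZ P_B$ from the matrix $Z=\begin{bmatrix}0&C\\ D&0\end{bmatrix}$ and realizes the canonical bases via $\eta_c=S_c$, $\xi_d=S_d$, with the relevant finite relatedness matrices being $\tilde C=C^G$ and $\tilde D=D^G$.

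For the ``if'' direction I would assume that the two algebras with their canonical bases are basis relatedly elementary Morita equivalent, which is precisely the standing hypothesis of Section 3: there exist finitely related imprimitivity bimodules ${}_A\!X_B$ and ${}_B\!Y_A$ with right finite bases $\{\eta_c\}_{c\in\E_C}$, $\{\xi_d\}_{d\in\E_D}$, index maps $c,d\colon E_A\cup E_B\to\E_C,\E_D$, and relatedness matrices $\tilde C,\tilde D$ satisfying the bimodule isomorphisms \eqref{eq:phiA}, \eqref{eq:phiB}. Under these hypotheses the matrices $C=[C(u,v)]_{u\in V_A,v\in V_B}$ and $D=[D(v,u)]_{v\in V_B,u\in V_A}$ constructed just before Lemma \ref{lem:4.10} are well defined nonnegative integer matrices, and Theorem \ref{thm:main2} establishes that $A=CD$ and $B=DC$, i.e.\ that $A$ and $B$ are elementary equivalent. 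Combining the two directions yields the asserted equivalence.

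I expect the genuine difficulty to lie entirely in the ``if'' direction, that is, in Theorem \ref{thm:main2} and its supporting Lemmas \ref{lem:4.1}--\ref{lem:4.10}: the delicate part is the combinatorial bookkeeping showing that the correspondence $a_i\mapsto c(a_i)\times d(a_i)$ sets up a \emph{bijection} between the edges counted by $A(u_1,u_2)$ and the concatenations counted by $\sum_{v}C(u_1,v)D(v,u_2)$, for which one must track source and terminal vertices through the relative tensor products using Lemmas \ref{lem:4.8} and \ref{lem:4.9} and the uniqueness statements of Lemma \ref{lem:4.10}. Since those results may be assumed here, the proof of Theorem \ref{thm:main3} itself reduces to citing Theorem \ref{thm:main1} for one direction and Theorem \ref{thm:main2} for the other, and no further computation is needed.
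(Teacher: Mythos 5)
Your proposal is correct and matches the paper exactly: the paper offers no separate argument for Theorem \ref{thm:main3}, obtaining the forward implication from Theorem \ref{thm:main1} and the converse from the standing hypothesis of Section 3 together with Theorem \ref{thm:main2}. Your identification of where the real work lies (the combinatorial bijection behind Theorem \ref{thm:main2}) is also accurate.
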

As a corollary we have 
\begin{corollary}\label{cor:main4}
Let $A, B$ be nonnegative square irreducible matrices
that are not any permutations.
The two-sided topological Markov shifts 
$(\bar{X}_A, \bar{\sigma}_A)$ and  
$(\bar{X}_B, \bar{\sigma}_B)$
are topologically conjugate
if and only if
the Cuntz--Krieger algebras 
$(\OA, \{S_a\}_{a \in E_A})$ and $(\OB, \{ S_b\}_{b\in E_B})$ 
with the canonical right finite bases 
are basis relatedly Morita equivalent.
\end{corollary}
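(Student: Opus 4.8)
The plan is to derive the corollary from R.~F. Williams' classification theorem together with the main theorem (Theorem \ref{thm:main3}), by matching the two ``transitive closure'' notions involved: strong shift equivalence is the equivalence relation generated by elementary equivalence, and \emph{basis relatedly Morita equivalence} is, by definition, the equivalence relation generated by \emph{basis related elementary Morita equivalence}. Williams' theorem reduces topological conjugacy of $(\bar X_A,\bar\sigma_A)$ and $(\bar X_B,\bar\sigma_B)$ to strong shift equivalence of $A$ and $B$, so it suffices to show that $A$ and $B$ are strong shift equivalent if and only if $(\OA,\{S_a\}_{a\in E_A})$ and $(\OB,\{S_b\}_{b\in E_B})$ are basis relatedly Morita equivalent, and to obtain this by iterating Theorem \ref{thm:main3} along each elementary link.

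For the forward implication, suppose $A$ and $B$ are strong shift equivalent and fix a chain $A=A_1,A_2,\dots,A_k=B$ in which $A_i$ is elementary equivalent to $A_{i+1}$. Each elementary equivalence induces a topological conjugacy $\bar\sigma_{A_i}\cong\bar\sigma_{A_{i+1}}$, so every $\bar\sigma_{A_i}$ is conjugate to $\bar\sigma_A$; since topological transitivity and infiniteness of the shift space (equivalently positive entropy) are conjugacy invariants, one may take the chain within irreducible non-permutation matrices, so that each $A_i$ falls in the class to which Theorem \ref{thm:main3} applies. Hence Theorem \ref{thm:main3}, in the direction of Theorem \ref{thm:main1}, yields a basis related elementary Morita equivalence between $(\mathcal{O}_{A_i},\{S_a\})$ and $(\mathcal{O}_{A_{i+1}},\{S_a\})$ with respect to their canonical bases. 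Concatenating these $k-1$ elementary equivalences gives, by definition, a basis related Morita equivalence between $(\OA,\{S_a\})$ and $(\OB,\{S_b\})$.

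For the converse, unfold a witnessing finite sequence of basis related elementary Morita equivalences $(\OA,\{S_a\})=(\A_0,\{s^0\})\sim\cdots\sim(\A_n,\{s^n\})=(\OB,\{S_b\})$. The crucial point is that every intermediate pair $(\A_i,\{s^i_a\}_{a\in\E_i})$ is again, as a $C^*$-algebra with right finite basis, the Cuntz--Krieger algebra of an irreducible non-permutation matrix. Indeed, reading the identity-bimodule inner products $\langle x\mid y\rangle_{\A_i}=x^*y$ and ${}_{\A_i}\!\langle x\mid y\rangle=xy^*$ into the finitely self-related relations \eqref{eq:3.2sa}--\eqref{eq:3.2sab} and the right-basis identity shows that $\{s^i_a\}$ is a Cuntz--Krieger family for the $\{0,1\}$-matrix $\tilde A_i$, that is $\sum_a s^i_a (s^i_a)^*=1$ and $(s^i_a)^*s^i_a=\sum_b\tilde A_i(a,b)\,s^i_b(s^i_b)^*$, and in particular $\A_i$ is unital. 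Composing the imprimitivity bimodules of the chain shows $\A_i$ is strongly Morita equivalent to $\OA$, so by Brown--Green--Rieffel $\A_i\otimes\K\cong\OA\otimes\K$, whence $\A_i$ is simple and purely infinite; this forces $\tilde A_i$ to be irreducible and not a permutation, and the universal property then makes the canonical surjection $\mathcal{O}_{\tilde A_i}\to\A_i$ an isomorphism. With each $(\A_i,\{s^i\})$ identified with $(\mathcal{O}_{\tilde A_i},\{S_a\})$, Theorem \ref{thm:main3}, in the direction of Theorem \ref{thm:main2}, turns each link into an elementary equivalence $\tilde A_i\sim\tilde A_{i+1}$, whence $A=\tilde A_0,\dots,\tilde A_n=B$ is a strong shift equivalence; Williams' theorem then returns the topological conjugacy.

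I expect the main obstacle to be precisely this identification of the intermediate algebras in the converse: one must verify both that the finitely self-related basis relations literally reproduce the Cuntz--Krieger relations for a $\{0,1\}$-matrix, and that simplicity together with pure infiniteness (transported across the Morita equivalence via Brown--Green--Rieffel) forces the associated matrix to be irreducible and non-permutation, so that the universal Cuntz--Krieger algebra coincides with $\A_i$. The forward direction, by contrast, is routine once the conjugacy-invariance argument confines the intermediate matrices to the class of irreducible non-permutation matrices to which Theorem \ref{thm:main3} applies.
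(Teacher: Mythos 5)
The paper proves this corollary in one line: Williams' theorem turns topological conjugacy into a chain of elementary equivalences, Theorem \ref{thm:main3} turns each link into a basis related elementary Morita equivalence and back, and the two chain-definitions (strong shift equivalence; basis related Morita equivalence) are matched link by link. Your forward direction is exactly this argument, and you are right that one must keep the chain $A=A_1,\dots,A_k=B$ inside irreducible non-permutation matrices before Theorem \ref{thm:main1} can be applied to each link; but your justification is insufficient: transitivity of the intermediate shifts does not make the intermediate matrices irreducible, since a reducible matrix (one whose graph has vertices lying on no bi-infinite path) can still present a transitive shift. The correct tool is Williams' decomposition theorem: splittings and amalgamations of irreducible non-permutation graphs remain irreducible and non-permutation, so the chain may be chosen in that class. (A smaller slip: the $\{0,1\}$ matrix attached to $(\OA,\{S_a\}_{a\in E_A})$ by Lemma \ref{lem:OAbasis} is $A^G$, not $A$, so your matrix chain naturally terminates at $A^G$ and $B^G$; this is harmless because $A$ and $A^G$ are elementary equivalent, but it needs saying.)

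The genuine gap is the step you yourself call crucial in the converse. Reading the definition so that the nodes of the chain are arbitrary $C^*$-algebras with finitely self-related bases (Definition \ref{defn:EME} even allows arbitrary finitely self-related imprimitivity bimodules as nodes, not only identity bimodules), you claim that the relations \eqref{eq:basis}, \eqref{eq:3.2sa}, \eqref{eq:3.2sab}, together with simplicity and pure infiniteness of $\A_i$ transported by Brown--Green--Rieffel, force the canonical map ${\mathcal{O}}_{\tilde A_i}\to \A_i$ to be an isomorphism. That is false, because those relations constrain only the $C^*$-subalgebra generated by the basis, never $\A_i$ itself. Concretely, take $\A_i={\mathcal{O}}_2\otimes{\mathcal{O}}_2$ with basis $\{s_1\otimes 1,\,s_2\otimes 1\}$ and $\tilde A_i$ the full $2\times2$ matrix: all the finitely self-related relations hold, $\A_i\cong{\mathcal{O}}_2$ is simple, purely infinite and Morita equivalent to ${\mathcal{O}}_2$, yet the canonical map ${\mathcal{O}}_2\to\A_i$ has range ${\mathcal{O}}_2\otimes\C 1$ and is not surjective; for the same reason simplicity of $\A_i$ cannot force $\tilde A_i$ to be irreducible. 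So under your reading the converse is established only for chains of length one, where it is just Theorem \ref{thm:main2}. The reading consistent with the paper's Introduction---and the one under which its one-line proof is complete---is that the nodes of the chain are themselves Cuntz--Krieger algebras with their canonical bases, so that Theorem \ref{thm:main2} applies to every link and no identification step is needed. If one insists on the general reading, closing the gap would require exploiting the bimodule isomorphisms of the adjacent links, not merely Morita-invariant properties of $\A_i$, and neither your argument nor the paper provides that.
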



\medskip

{\it Acknowledgments:}
This work was also supported by JSPS KAKENHI Grant Number 15K04896.


\end{document}